\date{\today}
\newtheorem*{theorem*}{Theorem}
\newtheorem{theorem}{Theorem}[section]
\newtheorem{corollary}[theorem]{\bf{Corollary}}
\newtheorem{lemma}[theorem]{Lemma}
\newtheorem{proposition}[theorem]{Proposition}
\theoremstyle{definition}
\theoremstyle{remark}
\newtheorem{remark}[theorem]{\bf{Remark}}
\newtheorem{exmp}{\bf{Example}}[section]
\numberwithin{equation}{section}
\newcommand{\beas}{\begin{eqnarray*}}
\newcommand{\eeas}{\end{eqnarray*}}
\newcommand{\bes} {\begin{equation*}}
\newcommand{\ees} {\end{equation*}}
\newcommand{\be} {\begin{equation}}
\newcommand{\ee} {\end{equation}}
\newcommand{\bea} {\begin{eqnarray}}
\newcommand{\eea} {\end{eqnarray}}
\newcommand{\R}{\mathbb R}
\newcommand{\C}{\mathbb C}
\newcommand{\Z}{\mathbb Z}%
\newcommand{\N}{\mathbb N}
\newcommand{\X}{\mathbb{X}}
\newcommand{\what}{\widehat}
\newcommand{\setm}{\setminus}
\newcommand{\intlim}{\int\limits}
\newcommand{\ol}{\overline}
\renewcommand{\Im}{\text{Im}}
\renewcommand{\Re}{\text{Re}}
\renewcommand{\Re}{\operatorname{Re}}
\renewcommand{\Im}{\operatorname{Im}}
\title[Pseudo-differential operator on  $\mathrm{SL}(2,\R)$ of $K$-type]{Pseudo-differential operators  on radial sections of line bundles over the Poincaré upper half plane}
\author[Rana, Ruzhansky]{Tapendu Rana, Michael Ruzhansky}	
\address{Tapendu Rana  \endgraf Department of Mathematics: Analysis, Logic and Discrete Mathematics,	\endgraf Ghent University, 	\endgraf Krijgslaan 281, Building S8, B 9000 Ghent, Belgium.} \email{tapendurana@gmail.com, tapendu.rana@ugent.be}
\address{Michael Ruzhansky \endgraf Department of Mathematics: Analysis, Logic and Discrete Mathematics,	\endgraf Ghent University, 	\endgraf Krijgslaan 281, Building S8, B 9000 Ghent, Belgium. 
\endgraf and
\endgraf School of Mathematical Sciences
\endgraf Queen Mary University of London
\endgraf United Kingdom}
\email{michael.ruzhansky@ugent.be} 
\date{}
\keywords{Pseudo-differential operators on groups, Coiffman-Weiss transference principle,  spherical functions,   $K$-type functions on $\mathrm{SL}(2,\R)$}
\subjclass[2020]{Primary: 22E30, 43A90;   Secondary: 22E46, 33C80}
\begin{document}
\maketitle
\begin{abstract}
In this article, we explore the boundedness properties of pseudo-differential operators on radial sections of line bundles over the Poincaré upper half plane, even when dealing with symbols of limited regularity. We first prove the boundedness of these operators when the symbol is smooth. To achieve this, we establish a connection between the operator norm of the local part of our pseudo-differential operators and the corresponding Euclidean pseudo-differential operators. Additionally, we introduce a class of rough symbols that lack any regularity conditions in the space variable and investigate the boundedness properties of the associated pseudo-differential operators. As a crucial part of our proof, we provide asymptotic estimates and functional identities for certain matrix coefficients of the principal and discrete series representations of the group $\mathrm{SL(2,\R)}$.
\end{abstract}
\tableofcontents
\section{{Introduction}}
The study of pseudo-differential operators can be traced back to the 1960s, with pioneering works by Hörmander \cite{Hormander_1965, Hormander_Ann_1966} and Kohn-Nirenberg \cite{Kohn_Nirenberg_1965}. Motivated by the deep connection between pseudo-differential operators and elliptic and hypoelliptic equations, they extensively studied the boundedness properties of pseudo-differential operators in classical spaces, which played a pivotal role in determining the regularity of solutions to related equations. By introducing the concept of symbol class and utilizing Fourier analysis, Hörmander provided a powerful framework for analyzing these operators on Euclidean space. Consequently, the study of pseudo-differential operators on $\mathbb{R}^d$, as well as in various other spaces, flourished, becoming an important and active research area in modern harmonic analysis and partial differential equations. In this article, we aim to explore the boundedness properties of pseudo-differential operators on radial sections of line bundles over the Poincaré upper half plane $\mathrm{SL}(2,\R)/\mathrm{SO(2)}$, and establish their connection to the classical Euclidean space. We begin by providing some essential background information to facilitate a mathematically rigorous discussion.
  
A pseudo-differential operator on $\R^d$ associated with a symbol $a(x,\xi)$ is an operator defined using the Fourier inversion formula:
\begin{equation*}
a(x, D)f(x)=\int_{\R^d} e^{2\pi i x \cdot \xi} a(x, \xi)\mathcal F f(\xi)\,d\xi,
\end{equation*}
where $f $ is a function in $C_c^{\infty}(\R^d)$, $\mathcal{F}f $ is the Fourier transform of $f $, and the symbol $a(x,\xi)$ belongs to an appropriate class of functions that encodes the behavior of the operator. One of the most widely used classes of symbols is $\mathcal{S}^{m}_{\varrho,\delta}$ introduced by H\"ormander \cite{Hormander_class}. This class consists of all $a \in C^{\infty} (\R^d \times \R^d)$ with
\begin{align*}
\left| \partial^{\beta}_{x}\partial^{\alpha}_{\xi} a(x,\xi) \right| \leq C_{\alpha,\beta} (1+|\xi|)^{m-\varrho|\alpha|+\delta |\beta|},
\end{align*}
for all multi-indices $\alpha, \beta$, where $m \in \R$ and $0\leq \varrho, \delta\leq 1$. H\"ormander's work established the boundedness of operators with symbols belonging to $\mathcal{S}^{d(\varrho-1)/2}_{\varrho,\delta}$ on $L^p(\R^d)$ for $1<p<\infty$, where $0\leq \delta<\varrho<1$; see \cite{Hormander_class}. In 1993, E. M. Stein further investigated the weak type and $L^p$-boundedness problems of pseudo-differential operators on $\R^d$. Specifically, he considered cases where the symbol belongs to the class $\mathcal{S}^{0}_{1,0}$; see \cite[Chap VI, Theorem 1]{Stein_93}. Moreover, by closely following the calculations,  the aforementioned result can be extended to the following version in the one-dimensional case:

 \begin{theorem}\label{thm:pdo_for_family_of_symbols}
 	 	Let  $ a(x,D)$  be the pseudo-differential operator associated with the symbol $ a$, which satisfies the following conditions for all $\alpha, \beta \in \{0,1,2\}$:
 	 	\begin{equation}\label{eqn:condn_on_symbol_family}
 	 		\left| {\partial^\beta_x}{\partial _{\xi} ^\alpha} a (x,\xi)\right| \leq  {C_{\alpha,\beta} }\, {( 1+ |\xi| )^{-\alpha}}. 
 	 	\end{equation}  
         Then, for $ 1<p<\infty $,  $a(x, D)$  extends to  a bounded operator from $ L^p(\R) $ to itself.
 	 \end{theorem}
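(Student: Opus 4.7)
The plan is to follow Stein's proof for the class $\mathcal{S}^0_{1,0}$ from \cite[Chap.~VI, Thm.~1]{Stein_93}, while keeping careful track of the fact that, in dimension one, only derivatives $\partial^\beta_x \partial^\alpha_\xi a$ with $\alpha,\beta \in \{0,1,2\}$ are needed in every estimate. The argument reduces $L^p$ boundedness to $L^2$ boundedness plus Calder\'on--Zygmund kernel estimates, followed by interpolation and duality.

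First, I would write the Schwartz kernel of $a(x,D)$ formally as
\[
K(x,y) = \int_{\R} e^{2\pi i (x-y)\xi} a(x,\xi)\, d\xi, \qquad x \neq y,
\]
interpreted as an oscillatory integral. Using a dyadic Littlewood--Paley decomposition $a = \sum_{j\ge 0} a_j$ with $a_j(x,\xi) = a(x,\xi)\psi_j(\xi)$ localized in $|\xi|\sim 2^j$, the trivial bound yields $|K_j(x,y)| \le C\cdot 2^j$, while two integrations by parts in $\xi$, invoking \eqref{eqn:condn_on_symbol_family} with $\alpha\in\{0,1,2\}$, give $|K_j(x,y)| \le C\cdot 2^{-j}|x-y|^{-2}$. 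Summing the pointwise minimum over $j$ yields the Calder\'on--Zygmund size estimate $|K(x,y)|\le C|x-y|^{-1}$; the analogous argument applied with an extra factor of $\xi$ in the integrand produces the regularity bound $|\partial_y K(x,y)|\le C|x-y|^{-2}$.

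Second, I would establish the $L^2(\R)$-boundedness by applying the Cotlar--Stein almost-orthogonality lemma to $T = \sum_j T_j$ with $T_j = a_j(x,D)$. Computing the kernels of the compositions $T_j T_k^*$ and $T_j^* T_k$ and integrating by parts (using the $\partial^\beta_x$ bounds with $\beta \in \{0,1,2\}$) yields operator-norm bounds $\|T_j T_k^*\|_{L^2\to L^2},\, \|T_j^* T_k\|_{L^2\to L^2} \le C\cdot 2^{-|j-k|}$, from which Cotlar--Stein delivers a uniform $L^2$ bound.

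Third, the kernel estimates together with the $L^2$ bound place $a(x,D)$ under the hypotheses of the classical Calder\'on--Zygmund theorem, yielding the weak-type $(1,1)$ inequality. Marcinkiewicz interpolation then gives boundedness on $L^p(\R)$ for $1<p\le 2$, and for $2 \le p < \infty$ I would use duality: the adjoint $a(x,D)^*$ is again a pseudo-differential operator whose symbol satisfies the same finite-regularity bounds (verified by keeping only the finitely many relevant terms in its symbolic expansion), so the previous range applies to it. The main obstacle is verifying the almost-orthogonality estimates for $T_j T_k^*$ and $T_j^* T_k$ using only the hypothesis for $\alpha,\beta\in\{0,1,2\}$; this is where the regularity is most tightly used, and it requires carefully splitting into cases according to the size of $|j-k|$ and exploiting the two available integrations by parts in each variable.
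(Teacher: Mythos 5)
Your overall route is exactly the one the paper has in mind: the paper gives no proof of this statement, but simply asserts it follows ``by closely following the calculations'' of \cite[Chap.~VI, Theorem~1]{Stein_93}, and the Littlewood--Paley decomposition, Cotlar--Stein almost-orthogonality for $L^2$, Calder\'on--Zygmund kernel estimates, and interpolation/duality you describe is precisely Stein's argument. The $L^2$ step and the kernel size estimate $|K(x,y)|\lesssim|x-y|^{-1}$ do work with only $\alpha,\beta\in\{0,1,2\}$.

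There is, however, one concrete step that fails as written: the claimed pointwise regularity bound $|\partial_y K(x,y)|\le C|x-y|^{-2}$. With two $\xi$-derivatives, two integrations by parts give $|\partial_y K_j(x,y)|\lesssim\min\bigl(2^{2j},\,|x-y|^{-2}\bigr)$ where the tail is \emph{uniform in $j$}: $\partial_\xi^2(\xi\,a_j)=O\bigl((1+|\xi|)^{-1}\bigr)$, so integrating over the dyadic block of length $2^j$ gives only $O(1)$, and no extra $2^{-j}$ decay is available. Summing over $j$ then diverges logarithmically near the diagonal; the best one can get from two $\xi$-derivatives is $|\partial_y K(x,y)|\lesssim|x-y|^{-2}\log(1/|x-y|)$. (The clean pointwise bound would require $\alpha=3$.) The fix is to note that Calder\'on--Zygmund theory does not need the pointwise gradient bound but only the integral H\"ormander condition $\sup_{y\ne y'}\int_{|x-y|>2|y-y'|}|K(x,y)-K(x,y')|\,dx<\infty$, and this \emph{does} follow from two $\xi$-derivatives by splitting the sum over $j$ at $2^j|y-y'|\sim 1$: for $2^j|y-y'|\le1$ use the mean value theorem and $\int_{|x-y|>2|y-y'|}|\partial_y K_j|\,dx\lesssim 2^j$, and for $2^j|y-y'|>1$ use $\int_{|x-y|>2|y-y'|}\bigl(|K_j(x,y)|+|K_j(x,y')|\bigr)\,dx\lesssim\bigl(2^j|y-y'|\bigr)^{-1}$; both partial sums are geometric. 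With that substitution your argument is complete.
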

   \begin{remark} If the symbol $a(x, \xi)$ is independent of the $x$ variable, say $a(x, \xi)=m(\xi)$, then the associated operator is called a multiplier operator.   If the multiplier $m:\R^d \rightarrow \C$ satisfies the following H\"ormander-Mikhlin differential inequalities:
\begin{equation*} 
\left|\frac{d^\alpha}{d\xi^\alpha} m(\xi)\right|\leq A_\alpha |\xi|^{-|\alpha|},
\end{equation*}
for all $\xi\in\R^d\setminus \{0\}$ and for all multi indices $\alpha$ with $0\leq |\alpha|\leq [d/2]+1 $, then  the corresponding multiplier operator is a bounded operator on $L^p(\R^d)$, $1<p<\infty$. We remark that for a pseudo-differential operator,  one cannot weaken the regularity assumption on the symbol $ a(x,\xi) $,  having singularity near $ \xi=0 $. Particularly if the symbol $a  $ satisfies the following simpler-looking condition
 	 	\begin{equation*} 
 	 		\left| {\partial^\beta_x}{\partial _{\xi} ^\alpha} a (x,\xi)\right| \leq  {C_{\alpha,\beta} }\, {|\xi| ^{-|\alpha|}},
 	 	\end{equation*}
  	then the corresponding operator may not be bounded; see \cite[p. 267]{Stein_93}.
\end{remark}

It is well known that pseudo-differential operators with symbols belonging to the class $\mathcal{S}^{0}_{1,0}$ are generalized Calderón-Zygmund operators, meaning that their kernels satisfy Hörmander’s condition, which imposes smoothness conditions on the space variable of the symbol. This naturally raises the question of investigating the boundedness problem of pseudo-differential operators with limited regularity in the space variable of the symbol. In this direction, several authors, including H. Kumano-go \cite{Kumango70}, H\"ormander \cite{HormanderL271}, Nagase \cite{Nagase77}, and R. Coifman and Y. Meyer \cite{Coifman78} have studied the $L^p$-boundedness problem with limited regularity.  More recently, in 2007, Kenig and Staubach \cite{carlos_07} introduced the class of $\Psi$-pseudo-differential operators, where the symbols have no regularity assumptions in the space variable. This opens up new possibilities for exploring the boundedness properties of pseudo-differential operators without the usual smoothness conditions on the symbol.

       Let $ {\mathcal{S}^{m}_{\varrho,\infty}}$ be the class of symbols consisting of all $a(x,\cdot) \in C^{\infty} (\R^d_\xi)$ such that $x\mapsto a(x,\xi)$ is a measurable function in $x$ and 
       \begin{align*} 
       \| \partial^{\alpha}_{\xi}a(x,\xi)\|_{L^\infty_{x}} \leq C_\alpha (1+|\xi|)^{m-\varrho |\alpha|}    
       \end{align*}
    for  all multi indices $\alpha$, where $m\in \R, ~\varrho\leq 1$, and  $C_{\alpha}$'s are constants. In the paper \cite{carlos_07}, the authors established the following result concerning the $L^{p}$-boundedness problem of pseudo-differential operators with non-regular symbols. 
   \begin{theorem}[{{\cite[Theorem 2.7]{carlos_07}}}]\label{thm_carlos}
       Let $a  \in  {\mathcal{S}^{m}_{\varrho,\infty}}, ~ 0\leq \varrho \leq 1$. Suppose that  $m<n(\varrho-1)/2$, then $a(x,D)$ is a bounded operator from $L^p(\R^d)$ to itself for all $p \in [2,\infty]$. For $\varrho =1$ and $m <0$ the range of $p$ for which the operator is $L^p$ bounded is $p\in [1,\infty]$. 
   \end{theorem}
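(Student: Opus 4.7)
My plan is to adapt the classical Littlewood--Paley argument for pseudo-differential operators, relying only on the $\xi$-regularity available in the rough class $\mathcal{S}^{m}_{\varrho,\infty}$. Fix a smooth dyadic partition of unity $1 = \chi_0(\xi) + \sum_{j\geq 1}\chi_j(\xi)$ with $\chi_j$ supported in the annulus $|\xi|\sim 2^j$ for $j\geq 1$ and $\chi_0$ supported in $|\xi|\leq 2$. Write $a(x,\xi) = \sum_j a_j(x,\xi)$ with $a_j = a\chi_j$ and $T_j = a_j(x,D)$. The low-frequency piece $T_0$ has compact $\xi$-support with $\xi$-derivatives bounded uniformly in $x$, so integration by parts in $\xi$ shows its kernel is uniformly dominated by a Schwartz function of $x-y$, hence $T_0$ is bounded on every $L^p$. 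For $j\geq 1$, the same integration by parts in $\xi$---the only variable in which $a$ has regularity---yields the fundamental kernel estimate
$$|K_j(x,y)| \leq C_N\, 2^{j(n+m)}\,(1+2^{j\varrho}|x-y|)^{-N}, \qquad N\in\N,$$
which drives every subsequent $L^p$-bound.

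For the special case $\varrho = 1$, $m<0$, integrating this bound in $y$ (resp.\ in $x$) gives the summable row and column estimates $\|T_j\|_{L^1\to L^1},\,\|T_j\|_{L^\infty\to L^\infty}\leq C\,2^{jm}$; summation followed by Riesz--Thorin interpolation delivers $L^p$-boundedness for every $p\in[1,\infty]$. For the general case with $m<n(\varrho-1)/2$ and $p\in[2,\infty]$, I would run a two-endpoint argument. For the $L^2$ endpoint, exploit the frequency localization $T_j = T_j P_j$, where $P_j$ is a smooth Littlewood--Paley projection onto a slightly enlarged annulus $|\xi|\sim 2^j$, and use Plancherel together with the pointwise bound $|a_j(x,\xi)|\lesssim 2^{jm}$ on its $\xi$-support to extract a summable operator norm under the critical threshold. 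For the $L^\infty$ endpoint (or the $L^\infty \to \mathrm{BMO}$ statement that suffices for interpolation), use the off-diagonal decay of $K_j$ combined with a standard Calder\'on--Zygmund splitting $f = f\chi_{2Q} + f\chi_{(2Q)^c}$ on dyadic cubes, handling the local part by the $L^2$ bound and the far part by kernel decay. Marcinkiewicz or Fefferman--Stein interpolation then yields the full range $p\in[2,\infty]$.

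The principal obstacle is the sharp $L^2$-bound for $T_j$ at the critical threshold $m = n(\varrho-1)/2$: the absence of any $x$-regularity of the symbol rules out the $TT^*$ cancellations that drive Calder\'on--Vaillancourt type arguments, so the frequency localization $T_j = T_j P_j$ has to carry the entire analytic burden, and it is this that pins down the sharp exponent. The $L^\infty$ endpoint is also delicate, since the naive row-sum $\sup_x \int|K_j(x,y)|\,dy$ is summable only under the strictly stronger condition $m<n(\varrho-1)$; closing the factor-of-two gap requires the BMO formulation and interpolation with $L^2$ rather than a standalone $L^\infty$ bound.
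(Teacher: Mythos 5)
This theorem is a cited result in the paper (\cite[Theorem 2.7]{carlos_07}); the paper itself contains no proof of it, so there is no internal argument to compare against. Evaluating your proposal on its own merits: the overall architecture (dyadic decomposition $a=\sum_j a_j$, integration by parts in $\xi$ to get the kernel bound $|K_j(x,y)|\lesssim 2^{j(n+m)}(1+2^{j\varrho}|x-y|)^{-N}$, and interpolation between two endpoints) is the right skeleton, and your treatment of the $\varrho=1$, $m<0$ case by row/column sums is complete and correct.

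However, there is a genuine gap in your $L^2$ endpoint. You propose to combine the localization $T_j=T_j P_j$ with Plancherel and the pointwise bound $|a_j(x,\xi)|\lesssim 2^{jm}$. Unwinding that: Cauchy--Schwarz in $\xi$ over the annulus $|\xi|\sim 2^j$ (measure $\sim 2^{jn}$) gives
$$|T_j f(x)|\;\leq\;2^{jm}\,2^{jn/2}\,\|\widehat{P_j f}\|_{L^2},$$
which is a bound in $L^\infty_x$, not in $L^2_x$; there is no mechanism in your sketch to integrate this in $x$. The alternative route via the kernel estimate and Schur's test yields only
$$\|T_j\|_{L^2\to L^2}\;\lesssim\;\sup_x\|K_j(x,\cdot)\|_{L^1}\;\lesssim\;2^{\,j\,(m+n(1-\varrho))},$$
which is summable in $j$ only when $m<n(\varrho-1)$ --- twice the claimed threshold $m<n(\varrho-1)/2$, and precisely the gap you already flagged for the $L^\infty$ row sums. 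Because the symbol has no $x$-regularity, $T_j f$ carries no output-frequency localization, so no $T_jT_k^*$ or $T_j^*T_k$ almost-orthogonality is available to rescue this; the input localization $T_j=T_jP_j$ alone cannot ``carry the entire analytic burden.''

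The missing ingredient is a \emph{second microlocalization} of each annulus $|\xi|\sim 2^j$ into roughly $2^{jn(1-\varrho)}$ boxes $Q_\nu$ of side $\sim 2^{j\varrho}$, writing $T_j=\sum_\nu T_{j,\nu}$ with $T_{j,\nu}=(a_j\varphi_\nu)(x,D)$. On each box the symbol has derivatives gaining a full $2^{-j\varrho}$ per order, so the box-localized kernel satisfies $\sup_x\|K_{j,\nu}(x,\cdot)\|_{L^1}\lesssim 2^{jm}$, giving $\|T_{j,\nu}\|_{L^2\to L^2}\lesssim 2^{jm}$ with input frequency support in $Q_\nu$. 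Cauchy--Schwarz in the box index $\nu$ then yields
$$\|T_j f\|_{L^2}\;\lesssim\;2^{jm}\sum_\nu\|\widetilde\varphi_\nu(D)f\|_{L^2}\;\lesssim\;2^{jm}\,\bigl(2^{jn(1-\varrho)}\bigr)^{1/2}\,\|P_j f\|_{L^2}\;=\;2^{\,j\,(m+n(1-\varrho)/2)}\,\|P_j f\|_{L^2},$$
which is summable exactly when $m<n(\varrho-1)/2$, and then Cauchy--Schwarz in $j$ together with Littlewood--Paley closes the $L^2$ bound. This two-scale decomposition (dyadic in $|\xi|$, then boxes at scale $2^{j\varrho}$) is what pins down the sharp exponent; without it, the argument proves only the weaker range $m<n(\varrho-1)$. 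The same refined bound is needed to make your proposed $\mathrm{BMO}$ endpoint and subsequent interpolation go through.
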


Over the past few decades, the theory of pseudo-differential operators on various Lie groups has gained significant popularity and has become a rich field of study with vast literature. For instance, the second author and his collaborators explored a noncommutative analog of the Kohn–Nirenberg quantization \cite{Kohn_Nirenberg_1965} for operators on compact Lie groups \cite{Ruzhansky_torus_10, Ruzhansky_comp_13, Ruzhansky_PDO_SYM, CR_Mem} and introduced the corresponding H\"ormander symbol class \cite{MR3217484, Ruzhansky_glo_14, MR3723800}.  Moreover, research on the $L^p$-boundedness of pseudo-differential operators in the context of compact Lie groups has been undertaken; see \cite{MR3369343, MR3610249, MR3936641}. Bernicot and Frey \cite{MR3180890} have delved into the study of pseudo-differential operators on homogeneous spaces. Additionally, the theory of pseudo-differential operators has been extended to graded Lie groups, with the authors in \cite{MR3362017, FR, MR4322546} introducing the symbol class based on a positive Rockland operator. Further research on pseudo-differential operators in the context of nilpotent Lie groups can be found in \cite{MR3167566, MR3969446}.

In recent times, there has been a growing interest in the theory of pseudo-differential operators on discrete spaces, primarily due to its connections with quantum ergodicity problems and the discretization of continuous problems; see \cite{Botchway_Ruzhansky_2020, Masson_14, Rana_Rano}. Moreover, other noteworthy works on pseudo-differential operators in non-Euclidean settings can be found in \cite{MR3680539, MR3953844, MR4078700, MR3741847,  MR3923338, MR4343620,  MR4565437}. 

The study of pseudo-differential operators has predominantly occurred in the doubling setting, which relies on suitable covering lemmas. However, when dealing with the Poincaré upper half plane or more general noncompact type symmetric spaces, these spaces exhibit exponential volume growth, leading to the absence of analogues for the Calderon-Zygmund decomposition or useful covering lemmas. Despite these challenges, Clerc and Stein addressed the multiplier problem in their seminal work \cite{Clerc_Stein} on general noncompact type symmetric spaces. In the context of the rank one symmetric space $\X$, the authors observed that for a given multiplier $m$, if the associated multiplier operator $T_m$ is to be bounded on $L^p(\X)$ for some $p\in (1,\infty)\setminus \{2\}$, then $m$ must necessarily extend to a bounded holomorphic function in the interior of the strip $S_p$ with
 \begin{align}\label{defn_Sp_intro}
	S_{p}:=\{\lambda \in\C: |\Im \lambda|\leq \gamma_p |\rho|\},\quad \text{where} \quad \gamma_p:=\left|\frac{2}{p}-{1}\right|,\quad\text{for }p\in[1,\infty),
\end{align}
and $\rho$ is half the sum of all positive roots with multiplicity. Later, in 1990, Anker, in his remarkable work  \cite{Anker}, proved the following analogue of the H\"ormander-Mikhlin multiplier theorem on noncompact type symmetric spaces of arbitrary rank. However, to avoid introducing additional notation, we present the results here in the context of rank one symmetric spaces.
\begin{theorem}[{{\cite[Theorem 1]{Anker}}}] \label{Anker}
	Let $\X$ be a rank one symmetric space of noncompact type, $1< p<\infty$, $v=| \frac{1}{p}-\frac{1}{2} |$ and $N=[v \dim \X] +1$. Assume that $m:\R\rightarrow \C$ extends to an even holomorphic function on $S_p^\circ$, $\frac{\partial^\alpha}{\partial \lambda^\alpha} m    
	\,  (\alpha=0, 1, \cdots, N)$ extend continuously to $S_p$  and satisfy
	\bes
	\sup_{\lambda\in S_p} (1 + |\lambda|)^{\alpha}\left|\frac{\partial^\alpha}{\partial \lambda^\alpha} m(\lambda) \right|<\infty,
	\ees
	for all $\alpha=0, 1, \cdots, N$.
	Then the associated multiplier operator $T_m$ 	is a bounded operator on $L^p(\X)$  for all $p\in (1,\infty)$.
 \end{theorem}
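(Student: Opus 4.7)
The plan is to realize $T_m$ as convolution with a bi-$K$-invariant kernel, split this kernel into a local piece (where $\X$ looks Euclidean) and a global piece (where exponential volume growth dominates), and treat the two pieces with completely different tools. By duality one may assume $1<p\leq 2$, so $\gamma_p=2/p-1\geq 0$. Evenness of $m$ makes the convolution kernel
\begin{equation*}
k_m(x)=c_0\int_0^\infty m(\lambda)\,\varphi_\lambda(x)\,|c(\lambda)|^{-2}\,d\lambda
\end{equation*}
bi-$K$-invariant (with $\varphi_\lambda$ the elementary spherical function and $c(\lambda)$ the Harish-Chandra function), and $T_m f=f*k_m$. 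Fix a smooth radial cutoff $\chi$ equal to $1$ near the identity and compactly supported, and split $T_m=T^0+T^\infty$ according to $k_m=\chi k_m+(1-\chi)k_m$.

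For the local operator $T^0$, on a bounded neighborhood of the identity the Riemannian measure is comparable to Lebesgue measure on $\R^n$ (with $n=\dim\X$), the spherical functions $\varphi_\lambda$ are well-approximated by the Euclidean radial Fourier kernels, and the Plancherel density $|c(\lambda)|^{-2}$ grows like $|\lambda|^{n-1}$ at infinity, so the inverse spherical transform mirrors the inverse Fourier transform on $\R^n$. Via the Coifman-Weiss transference principle (signalled among the paper's keywords), $L^p(\X)$-boundedness of $T^0$ reduces to $L^p(\R^n)$-boundedness of a radial Euclidean Fourier multiplier whose symbol inherits the Mikhlin-type bounds from $m$; this follows from the classical H\"ormander-Mikhlin theorem after interpolation with the trivial Plancherel bound at $p=2$, which lets the derivative count drop from $[n/2]+1$ to the sharper $N=[vn]+1$.

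For the global operator $T^\infty$, the support condition $|x|\geq 1$ activates the large-distance asymptotic
\begin{equation*}
\varphi_\lambda(x)\sim c(\lambda)\,e^{(i\lambda-|\rho|)|x|}+c(-\lambda)\,e^{(-i\lambda-|\rho|)|x|}.
\end{equation*}
I shift the $\lambda$-contour from $\R$ up to $\R+i\gamma_p|\rho|$, which is permitted by the holomorphy of $m$ on $S_p^\circ$ together with its continuous boundary behaviour on $S_p$, and which extracts a decay factor $e^{-\gamma_p|\rho||x|}$ from each exponential. Integrating by parts in $\lambda$ up to $N$ times yields the pointwise estimate $|(1-\chi)k_m(x)|\leq C\,e^{-\gamma_p|\rho||x|}(1+|x|)^{-N}$. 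Against the Riemannian volume growth $\sim e^{2|\rho||x|}$, this places $(1-\chi)k_m$ in a small enough Lorentz space that convolution inequalities of Kunze-Stein and Herz-Lohou\'e type on rank-one noncompact symmetric spaces yield $L^p$-boundedness of $T^\infty$.

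The main obstacle is the careful bookkeeping of the contour shift: one must verify enough decay of $m$ along horizontal lines in $S_p$ to justify the shift, track any poles of $|c(\lambda)|^{-2}$ that lie inside the strip (a subtlety in some low-dimensional cases), and match the derivative count $N=[v\dim\X]+1$ precisely by interpolating the decay estimate above with the sharp Plancherel bound at $p=2$, rather than by using the full $[n/2]+1$ derivatives naively.
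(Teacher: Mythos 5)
First, note that the paper does not prove this statement: it is quoted verbatim as \cite[Theorem 1]{Anker} and used only as background, so there is no internal proof to compare against. Your architecture (local/global splitting of the convolution kernel, transference to a Euclidean multiplier for the local piece, contour shift plus Kunze--Stein/Herz for the global piece) is exactly the template of Clerc--Stein, Stanton--Tomas and Ionescu, and is also the template the present paper follows for its own theorems, so the overall plan is sound as a route to \emph{some} multiplier theorem on $\X$.

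However, as a proof of Anker's theorem with the sharp derivative count $N=[v\dim\X]+1$ there is a genuine gap in your treatment of the global part. In rank one, $c(-\lambda)^{-1}$ grows like $|\lambda|^{(\dim\X-1)/2}$ along the strip, and each correction coefficient in the full Harish-Chandra expansion (there are infinitely many terms, not just the two you display) satisfies only symbol-type bounds. After shifting the contour and integrating by parts $N$ times, the integrand still behaves like $(1+|\lambda|)^{(\dim\X-1)/2-N}$, which is absolutely integrable only when $N>(\dim\X+1)/2$. For $p$ close to $2$ one has $N=[v\dim\X]+1$ as small as $1$, so the claimed pointwise bound $|(1-\chi)k_m(x)|\lesssim e^{-\gamma_p|\rho||x|}(1+|x|)^{-N}$ simply does not follow from termwise integration by parts; the same objection applies to the local piece, where the classical H\"ormander--Mikhlin theorem needs $[\dim\X/2]+1$ derivatives. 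The reduction to $N$ derivatives is not a cosmetic ``interpolation with the Plancherel bound at $p=2$'' performed at the end: it is the core of Anker's argument, which proceeds by a dyadic decomposition of $m$, $L^2$ (Plancherel) estimates and weak-type $(1,1)$ bounds for each dyadic piece, and analytic/real interpolation across the family. In the present paper this issue is invisible because $\dim\X=2$ and two derivatives always suffice, but for the general rank-one statement you are proving, the step ``integrating by parts in $\lambda$ up to $N$ times yields the pointwise estimate'' is false as written and needs to be replaced by the dyadic scheme.
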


We note that Anker also proved the weak type $(1,1)$-boundedness of the multiplier operator by imposing certain regularity assumptions on the boundary. However, Anker suggested that it might be possible to relax these assumptions and allow $m$ to have a singularity at the boundary points $\pm i\gamma_p |\rho|$ while still being an $L^p$ multiplier on $\X$.
Later, Ionescu \cite{Ionescu_2002,Ionescu_2003} made further improvements to the theorem by replacing the continuity requirement of the multiplier $m$ on the boundary with a condition related to the singularity at $\pm i\gamma_p |\rho|$. This condition is the best-known sufficient condition of the Hörmander-Mikhlin type on $\X$.

\begin{theorem}[{{\cite[Theorem 8]{Ionescu_2002}}}]
	Let $\X$ be a rank one symmetric space of noncompact type and  $1< p<\infty$. Assume that $m:\R\rightarrow \C$ extends to an even holomorphic function on $S_p^\circ$ and satisfies
	 \bes
	\sup_{\lambda\in S_p^{\circ}} (\left|\lambda\pm i\gamma_p |\rho| \right|)^{\alpha}\left|\frac{\partial^\alpha}{\partial \lambda^\alpha} m(\lambda) \right|<\infty,
\ees
for all $\alpha=0, 1, \cdots ,\left[\frac{\dim \X+1}{2}\right]+2$. Then, the associated multiplier operator $T_m$ is a bounded operator on $L^p(\X)$.
\end{theorem}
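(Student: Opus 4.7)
The plan is to prove this endpoint refinement of Anker's theorem via contour deformation in the Helgason--Fourier inversion formula, combined with a dyadic decomposition near the boundary points $\pm i\gamma_p|\rho|$ and the Kunze--Stein phenomenon. By complex conjugation of $m$, duality reduces the problem to the range $p\in(1,2)$; the case $p=2$ follows from Plancherel together with $\|m\|_\infty<\infty$, which is the $\alpha=0$ consequence of the hypothesis. Since $m$ is even, $T_m$ is right convolution by a bi-$K$-invariant kernel
\[
k_m(x)\;=\;c\int_\R m(\lambda)\,\varphi_\lambda(x)\,|{\bf c}(\lambda)|^{-2}\,d\lambda,
\]
where $\varphi_\lambda$ is the Harish--Chandra elementary spherical function and ${\bf c}$ is the Harish--Chandra $c$-function.

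The first main step is to exploit the holomorphy of $m$ on $S_p^\circ$, together with the analytic continuation of $\varphi_\lambda$ and of $|{\bf c}(\lambda)|^{-2}$ to the strip, to shift the contour from $\R$ to a pair of parallel lines at height $\pm\gamma_p|\rho|$, indenting around the boundary singularities $\pm i\gamma_p|\rho|$. After the shift, $\varphi_\lambda(x)$ acquires the factor $e^{\gamma_p|\rho|\,d(o,x)}$, which is precisely the weight compensated by the local $L^p$-integrability of the spherical function underlying the Kunze--Stein phenomenon in the range $p<2$.

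Next, I would decompose the shifted integral dyadically near each singular endpoint. With a Littlewood--Paley partition $\{\chi_j\}_{j\ge 0}$ adapted to $\{\,\left|\lambda\mp i\gamma_p|\rho|\right|\sim 2^{-j}\,\}$, set $m_j=m\chi_j$. The hypothesis
\[
\left|\partial_\lambda^\alpha m(\lambda)\right|\;\lesssim\;\left|\lambda\mp i\gamma_p|\rho|\right|^{-\alpha},\qquad \alpha=0,1,\ldots,N,
\]
with $N=[(\dim\X+1)/2]+2$, converts into scale-invariant bounds $\|\partial_\lambda^\alpha m_j\|_\infty\lesssim 2^{j\alpha}$. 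Integration by parts in $\lambda$ then produces, for each piece, a kernel estimate of the form $|k_{m_j}(x)|\lesssim 2^{-j}\,e^{-\gamma_p|\rho|\,d(o,x)}\,\Phi_j(d(o,x))$, where $\Phi_j$ is controlled in a suitable Lorentz norm. The sharp Kunze--Stein inequality in its Lorentz formulation then yields $L^p$-boundedness of each $T_{m_j}$ with operator norm bounded by a geometric factor in $j$; summing the resulting series closes the argument.

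The main obstacle lies precisely in the shifted kernel estimate for $k_{m_j}$. Unlike Anker's theorem, where a continuous pointwise bound for $m$ on the boundary is hypothesized, here the singularity at $\pm i\gamma_p|\rho|$ forces one to simultaneously track the vanishing of the $c$-function at $\pm i\rho$, the contribution of the indentation contour, and the oscillatory behaviour of $\varphi_\lambda$ for large $d(o,x)$. Balancing these ingredients so that exactly $N$ derivatives on $m$ suffice to produce geometric summability and match the Lorentz-space version of Kunze--Stein is the heart of the argument and is the step that dictates the precise derivative count in the statement.
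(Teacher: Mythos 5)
This theorem is quoted in the paper as a citation (Ionescu 2002, Theorem~8), not proved there; the paper does, however, adapt the same strategy to its own pseudo-differential operator, so your sketch can be measured against that. Several ingredients in your plan are right --- duality to $1<p<2$, the bi-$K$-invariant kernel formula, exploiting holomorphy of $m$ on $S_p^\circ$ and of the Harish--Chandra expansion, and Kunze--Stein type inequalities --- but there are two genuine gaps that would stop the argument.

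First, you never treat the \emph{local} part of the kernel, $d(o,x)\lesssim 1$. There $\varphi_\lambda(a_t)$ behaves like a Bessel function and $|\mathbf{c}(\lambda)|^{-2}$ grows like $|\lambda|^{\dim\X-1}$, so the contour-shift/Kunze--Stein picture you draw simply does not apply; this region is handled by a transference to a Euclidean Mikhlin--H\"ormander estimate, and it is precisely where the derivative count $N=\left[\frac{\dim\X+1}{2}\right]+2$ originates. Second, in the global part, shifting to the \emph{fixed} lines $\Im\lambda=\pm\gamma_p|\rho|$ ``with indentations'' is exactly what the hypothesis forbids: $m$ is holomorphic only on the open strip and may blow up at $\pm i\gamma_p|\rho|$, so there is no well-defined boundary value to integrate. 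The device that makes the endpoint theorem work is a \emph{variable-height} shift to $\Im\lambda=-\bigl(\gamma_p-c/t\bigr)|\rho|$ with $t=d(o,x)$, so the exponential loss $e^{(\gamma_p-c/t)|\rho|t}$ matches $e^{\gamma_p|\rho|t}$ up to a bounded factor while $N$ integrations by parts supply the $(1+t)^{-N}$ needed to integrate against the Abel/Herz weight. Your fixed-height Littlewood--Paley decomposition near the endpoint does not obviously close: $m_j$ has $\|\partial^\alpha m_j\|_\infty\lesssim 2^{j\alpha}$ and support of length $\sim 2^{-j}$, so $\alpha$ integrations by parts yield $\sim 2^{j(\alpha-1)}t^{-\alpha}$, which does not decay geometrically in $j$ at fixed $t$; to salvage it you would have to shift each dyadic piece along a $j$-dependent contour, which is the variable-height shift in disguise and is the missing crux of the argument.
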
  
  
Motivated by the works of Ionescu \cite{Ionescu_2002,Ionescu_2003}, the authors Ricci and Wr\'{o}bel \cite{Ricci} studied the $L^p$-boundedness ($1<p<\infty$) of multiplier operators on the radial sections of line bundles over the Poincaré upper half plane $\mathrm{SL}(2,\R)/\mathrm{SO(2)}$. More precisely, they focused on the group $G=\mathrm{SL}(2,\R)$, and instead of considering $K=\mathrm{SO(2) }$-biinvariant functions, they explored the problem of the $L^p$-boundedness for the multiplier operator defined on functions on $G$ satisfying the property:
\begin{align}\label{n_ntype_int}
f(k_\theta g k_\vartheta) = e^{in(\theta+\vartheta)} f(g),
\end{align}
 for all $g \in G$, $k_\theta, k_{\vartheta}\in K$, where $n\in \Z$ is fixed and \begin{align*}
  k_\theta := \begin{pmatrix}
				\cos \theta  & \sin \theta   \\
				-\sin \theta & \cos \theta  
			\end{pmatrix}.
   \end{align*} 
The functions satisfying \eqref{n_ntype_int} are called $(n,n)$ type functions on $G$. In the special case where $n=0$, their result \cite[Theorem 5.3]{Ricci} aligns with the multiplier theorems on symmetric spaces obtained by Anker \cite{Anker} and Stanton and Tomas \cite{Stanton_and_Tomas}. Moreover, the authors \cite{Ricci} extended the result of Clerc and Stein \cite[Theorem 1]{Clerc_Stein} in this setting by providing a necessary condition on the multiplier $m$ for the associated multiplier operator to be $L^p$-bounded for $p\in (1,\infty)\setminus \{2\}$. For more details, please see Theorem \ref{thm_RW}.
   
Given the progress in studying multiplier operators,  the present article continues the research trend by introducing pseudo-differential operators on radial sections of line bundles over the Poincaré upper half plane and investigating their boundedness properties. 

In this study, we establish a sufficient condition on the symbol that ensures the $L^p$-boundedness of the corresponding pseudo-differential operator for $(n,n)$ type functions on $G$; see Theorem \ref{thm:pdo_for_n,n_type}. This result not only extends the findings in \cite{Ricci} for the multiplier case but also presents an analogous version of Theorem \ref{thm:pdo_for_family_of_symbols} in the context of Euclidean space.

Additionally, we present a weak-type version of the generalized transference principle by Coifmann-Weiss. By employing this principle, we establish a connection between the operator norm of the local part of our pseudo-differential operators and the corresponding Euclidean pseudo-differential operators.

Furthermore, we delve into the boundedness problem of pseudo-differential operators associated with non-smooth symbols in the space variables in our setting; see Theorem \ref{thm_nonreg}. This result is a natural analogue of the boundedness of rough pseudo-differential operators established by Kenig and Staubach in Euclidean spaces Theorem \ref{thm_carlos}. To further investigate this area, inspired by the works of Kenig and Staubach \cite{carlos_07}, we introduce the class $\mathcal{S}^m_{\varrho,\infty}(S_p)$ of rough symbols with no regularity conditions in the space variable and analyze the boundedness properties of the associated pseudo-differential operators for $(n,n)$ type functions on $G$.

Before we proceed into the specifics of the paper, let us compare our analysis of pseudo-differential operators with that of multiplier operators for $(n,n)$ type functions on $G$.

In the context of multipliers, the associated operator can be represented as a convolution operator, which allows one to make use of various convolution inequalities like the \textit{Kunze-Stein} phenomenon and Herz majorizing principle available on the group $G$. However, dealing with pseudo-differential operators does not provide the same convenience of employing these mentioned methods, marking a crucial distinction between the two cases. Additionally, in this group setting, there is an extra discrete component of the Plancherel measure, leading us to decompose our pseudo-differential operator $\Psi_{\sigma}$ into discrete and continuous parts; see \eqref{eqn_decom_of_psi_sigma}. Addressing the discrete part poses a new challenge, where we use an asymptotic estimate of the spherical functions ($\psi_{ik}^{n,n}$), departing from the \textit{Kunze-Stein} phenomenon used in the multiplier case \cite{Ricci}.  In fact, we establish a characterization of the matrix coefficients of discrete series representation lying in weak $L^p$ spaces; see Lemma \ref{lem_psi_k_Lp}.   This characterization represents the weak-type version of Mili\v{c}i\'c's result \cite[Corollary, p.84]{mililic_1977} in the context of $\mathrm{SL}(2,\R)$, which we employed to establish the boundedness of the discrete part of $\Psi_{\sigma}$.

Moving to the continuous part of $\Psi_{\sigma}$, we first use a functional identity of the spherical functions ($\phi_{\tau,\lambda}^{n,n}$) to rewrite the operator as a kernel integral operator on $G$. However, considering the exponential growth of the measure on the group, we further decompose the continuous part of $\Psi_\sigma$ into two parts, namely the local part $(\Psi^{\text{loc}}_{\sigma})$ and the global part $(\Psi^{\text{glo}}_{\sigma})$; see Section \ref{eqn_decom_of_psi_sigma}.

Proving the boundedness of the local part $(\Psi^{\text{loc}}_{\sigma})$ constitutes a major challenge. In the case of multipliers, the availability of a transference principle for convolution operators leads to their boundedness. On the other hand, for pseudo-differential operators, we utilize a generalized Coifman-Weiss transference principle for singular integral operators to establish a relation between the operator norm of the local part of our pseudo-differential operators and the corresponding Euclidean pseudo-differential operators. This relation helps us derive the derivative condition on the space variable of the symbol.

The situation is entirely different for the global part $(\Psi^{\text{glo}}_{\sigma})$, where the analysis of the Poincaré upper half-plane becomes crucial. While multiplier theory in \cite{Ricci} relies on the Herz majorizing principle to prove the boundedness, we need to adopt a different approach. We use the global expansion of the spherical function and the holomorphicity condition on the dual variable of the symbol to obtain a quantitative estimate of the kernel away from the origin. Finally, by employing a property of the Abel transform and a duality argument, we establish the $L^p$-boundedness of $\Psi^{\text{glo}}_{\sigma}$.

A critical element in proving the boundedness of $\Psi_\sigma$ for a rough symbol $\sigma$ (Theorem \ref{thm_nonreg}) is the established relation between the local part of a pseudo-differential operator on radial sections of line bundles over the Poincaré upper half-plane and the corresponding operator in the Euclidean space. Specifically, using the generalized Coifmann-Weiss transference principle, we reduce the $L^p$-boundedness of the local part $\Psi_\sigma^{\text{loc}}$ to the $L^p$-boundedness of certain Euclidean pseudo-differential operators whose symbols depend on $\sigma$ (as shown in Section \ref{Analysis on the local part}). This enables us to apply the result of Kenig and Staubach (Theorem \ref{thm_nonreg}) in our setting and, consequently, establish the boundedness of the operator $\Psi_\sigma^{\text{loc}}$. The boundedness of $\Psi^{\text{dis}}_{\sigma}$ and $\Psi^{\text{glo}}_{\sigma}$ follows similarly, as these operators do not require any smoothness condition on the space variable of the symbol $\sigma$.

  We conclude this section by providing an outline of this article.

In the next section, we present the necessary background on the group $\mathrm{SL}(2,\R)$ and establish some results relevant to our study. Then, in Section \ref{main_result}, we introduce pseudo-differential operators for $(n,n)$-type functions on $G$ and present our main results. In the same section, we introduce the class $\mathcal{S}^m_{\varrho,\infty}(S_p)$.

In Section \ref{sec_transference_principle_weak}, we delve into the generalized Coifman-Weiss transference principles. Following that, in Section \ref{sec_phi_m,n}, we prove some functional identities and derive asymptotic estimates of the spherical functions, which are crucial for our study.

Next, in Section \ref{decomposition of PSi}, we represent $\Psi_{\sigma}$ as singular integral operators. The subsequent sections, namely Sections \ref{sec_dis_part}, \ref{Analysis on the local part}, and \ref{sec_glo_par}, are dedicated to stating and proving the boundedness results of $\Psi^{\text{dis}}_{\sigma}$, $\Psi^{\text{loc}}_{\sigma}$, and $\Psi^{\text{glo}}_{\sigma}$, respectively. The proofs in these sections will lead to the establishment of Theorem \ref{thm:pdo_for_n,n_type}.

Finally, in Section \ref{sec_nonreg}, we prove Theorem \ref{thm_nonreg}, concluding our investigation.
\section{Preliminaries}\label{Preliminaries}
\subsection{Generalities}
The letters $\N$, $\Z$, $\R$, and $\C$ will respectively denote the set of all natural numbers, the ring of integers, and the fields of real and complex numbers. We denote the set of all non-negative integers and nonzero integers by $ \Z_+ $ and $\Z^*$, respectively. For $ z \in \C $, we use the notations $\Re z$ and $\Im z$ for real and imaginary parts of $z$, respectively. We shall follow the standard practice of using the letters $C$, $C_1$, $C_2$, etc., for positive constants, whose value may change from one line to another. Occasionally, the constants will be suffixed to show their dependencies on important parameters. We will use $X\lesssim Y$ or $Y\gtrsim X$ to denote the estimate $X\leq CY$ for some absolute constant $C>0$. We shall also use the notation $X\simeq Y$ for $X\lesssim Y$ and $Y\lesssim X$.  For any Lebesgue exponent $ p \in [1,\infty)$, let $p^{\prime}$ denote the conjugate exponent $p/(p-1)$. Throughout this article,  we denote  
\begin{align}\label{defn_Sp}
	S_{p}=\{z\in\C: |\Im z|\leq \gamma_p\}\quad\text{and}\quad \gamma_p=\left|\frac{2}{p}-1\right|,\quad\text{for }p\in[1,\infty).
\end{align}
From the above definitions it is evident that $\gamma_{p}=\gamma_{p^{\prime}}$ and $S_{p}=S_{p^{\prime}}$ for all $p\in[1,\infty)$. We shall henceforth write $S^{\circ}_{p}$ and $\partial S_{p}$ to denote the usual topological interior and the boundary of $S_{p}$ respectively. 
\subsection{Lorentz spaces}Let $(X,m) $ be a $ \sigma $-finite measure space. For $f: X \rightarrow \C$  a measurable function on $ X $, the distribution function $ d_f $ defined on $ [0,\infty) $  is given by $$ d_f(\alpha) = m (\{ x \in X \mid |f(x)| > \alpha \}) .$$
  For  $ p\in [1,\infty), q \in [1,\infty]$, the Lorentz spaces $ L^{p,q}(X) $ consist of all measurable functions $f  $ on $X$ for which  $\| f \|_{p,q} $ is finite, where  $\| f \|_{p,q} $ is the Lorentz space norm defined as follows, as in  \cite[Prop.1.4.9]{Grafakos}  
  \begin{equation}\label{defn_lor}
 	\|f\|_{p,q} = \begin{cases}
 		p^{\frac{1}{q}} \left( \int_0^\infty \alpha^q d_f(\alpha) ^{\frac{q}{p}} \frac{d\alpha}{\alpha}\right)^\frac{1}{q}  \text{  when } q < \infty,\\
 		\sup\limits_{\alpha > 0} \,   \alpha^{{p}} d_f(\alpha)   \hspace{1.9 cm}\text{ when } q =\infty.
 	\end{cases}
 \end{equation} 
\subsection{The group $ \mathrm{SL}(2,\R) $}\label{sec:The_group_SL(2,R)} From this section onwards, $G$ will always denote the group $\mathrm{SL}(2, \R)$. The  Iwasawa decomposition for $G$  gives a diffeomorphism between $K \times A \times N$ and $G$, where $K=SO(2),$
	       
        \begin{equation*}
            \begin{aligned}
	            A&=\left\{ a_t := \begin{pmatrix}
				e^t  & 0   \\
				0 & e^{-t}  
			\end{pmatrix} :  t \in \R \right\}, \quad \text{and} \quad N=\left\{ n_v := \begin{pmatrix}
				1  & v  \\
				0 & 1  
			\end{pmatrix} :   v \in \R \right\} .
              \end{aligned}
              \end{equation*}
			That is, by the Iwasawa decomposition any $x \in G$ can  be  written uniquely  as $x =k_\theta  a_t n_v $; using this, we write $$K(x) = k_\theta,\quad H(x) = t, \quad \text{and} \quad N(x)= v.$$ In fact, if $ x=\begin{pmatrix}
				a  & b  \\
				c & d  
			\end{pmatrix}\in \mathrm{SL}(2,\R)  $, then $\theta, t$ and $v$ are given by
   \begin{equation}\label{formula_for_SL2R}
       e^{2t} =a^2+c^2,\, e^{i\theta}= \frac{a-ic}{\sqrt{a^2+c^2}}, \text{ and } v= \frac{ab+cd}{\sqrt{a^2+c^2}}.
   \end{equation}We also have another Iwasawa  decomposition 
        \bes 
                     G=\ol{N} A K, 
        \ees 
        where $$ \ol{N}=\left\{ \overline{v} := \begin{pmatrix}
				1  & 0  \\
				v & 1  
			\end{pmatrix} :   v \in \R \right\} .$$
        Next, let $A^+=\{ a_t \mid t>0 \}$; then the Cartan decomposition for $G$  gives $$G={K \overline{A^+}K} .$$ Using this we define $g^{+} $ as the $\R^+$ component in the Cartan decomposition, of the element  $g\in G$, that is we denote $a_{g^+}$ as the unique element such that $$g =k_1 a_{g^+} k_2, \quad \text{for some } k_1,k_2 \in K.$$
       A function $f:G \rightarrow \C$ is said to be of $(n,n)$ type if 
        \begin{equation} \label{K_type_n}
				f(k_{\theta} x k_{\vartheta}) = e_n(k_\theta)f(x) e_n(k_\vartheta) 
			\end{equation}
   for all $k_{\theta}, k_{\vartheta} \in K$ and $x\in G$, where $e_n(k_\theta) := e^{in \theta}$. If $n=0$, we refer to such functions as $K$-biinvariant functions.

   By the Cartan decomposition we can extend a function $ f $ defined on $ \overline{A^+} $ to an $ (n,n) $ type function on $ G $ by 
   \begin{equation}\label{extension a function defined on closure of A+ to G}
				f( k_\theta a_t k_\vartheta ) = e_n(k_\theta) f(a_t) e_n(k_\vartheta), \quad \text{ for all } k_\theta , k_\vartheta \in K.
	\end{equation}  
 It is known that the following groups $G$, $\ol {N}$, and $K$ are unimodular, and we will denote the  (left or equivalently right) Haar measures of these groups as $dx$, $d \ol{v}$ and  $dk$, where $\int_K dk=1$. Then we have the following integral formulae corresponding to the Iwasawa and  Cartan decompositions, which hold for any integrable function $f$:
			\begin{equation}\label{eqn:integral-decom-nbar-a}
				\intlim_{G}f(x) dx =\int_{\ol N} \int_{\R} \int_{K} f(\ol  va_t k)e^{2 t} d\ol v dt dk,
			\end{equation}
			and		
			\begin{equation}\label{definition of integration}
				\int_G f(x) dx = \int_K \int_{\R^+} \int_K f(k_1 a_t k_2) \Delta(t) dk_1 dt dk_2,
			\end{equation}
			where $\Delta(t)= 2 \sinh 2t$.
   We recall the following integration formula on $K$ from \cite[Lemma 5.19, p.197]{Helgason_GGA}.
\begin{lemma}\label{k(xk)_dif}
    Let $x \in G$. The mapping $T_x:k \mapsto K(xk)$ is a diffeomorphism of $K$ onto itself and 
    \begin{equation}
        \int_K F(K(xk)) \, dk =\int_K F(k) e^{-2 H(x^{-1}k)} \, dk, \quad F\in C(K).
    \end{equation}
\end{lemma}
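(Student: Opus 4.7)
The plan is to verify bijectivity (hence the diffeomorphism property) of $T_x$ by exhibiting an explicit smooth inverse, and then derive the integration formula via a change of variables driven by the left-invariance of Haar measure on $G$.

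For the diffeomorphism claim, I would take as the candidate inverse the map $S_x : k' \mapsto K(x^{-1}k')$. To check $S_x = T_x^{-1}$, given $k' \in K$, apply Iwasawa to $y = x^{-1}k'$ so that $y = K(y)\, a_{H(y)}\, n_{N(y)}$. Rearranging gives $x\, K(y) = k'\, n_{N(y)}^{-1} a_{H(y)}^{-1}$, and using the commutation rule $n_v a_t = a_t n_{e^{-2t}v}$ this can be rewritten in $KAN$ order, which by uniqueness of the Iwasawa decomposition shows that $K(x K(y)) = k'$ and, as a bonus, $H(x K(y)) = -H(y)$. This identity will be the pivot of the measure computation below. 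Smoothness of both $T_x$ and $S_x$ follows from the smoothness of the Iwasawa projections, so $T_x$ is a diffeomorphism of $K$ onto itself.

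For the integration formula, I would apply left-invariance of Haar measure, combined with the $KAN$ form of the Iwasawa integration formula
\[
\int_G f(g)\, dg = \int_K \int_\R \int_\R f(k\, a_t\, n_v)\, e^{2t}\, dv\, dt\, dk,
\]
which is equivalent to the $\overline{N}AK$ formula stated in the paper. Writing $\int_G f(xg)\,dg = \int_G f(g)\,dg$ and replacing $xk$ by its Iwasawa decomposition $K(xk)\, a_{H(xk)}\, n_{N(xk)}$, the commutation $n_v a_t = a_t n_{e^{-2t}v}$ lets me translate in $v$ (Jacobian $1$) and then in $t$ (shifting $t \mapsto t - H(xk)$ which introduces the factor $e^{-2H(xk)}$). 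Taking $f$ of product form $f(k a_t n_v) = F(k)\phi(a_t)\psi(n_v)$ and cancelling the $\phi, \psi$ factors yields the auxiliary identity
\[
\int_K F(k)\, dk = \int_K F\bigl(K(xk)\bigr)\, e^{-2H(xk)}\, dk \qquad (F \in C(K)). \quad (\star)
\]

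Finally, to pass from $(\star)$ to the lemma, I change variables via $T_x$: set $k' = K(xk)$, so that $k = K(x^{-1}k')$. By the identity $H(x K(x^{-1}k')) = -H(x^{-1}k')$ proved in the first step, the weight $e^{-2H(xk)} = e^{2H(x^{-1}k')}$. Feeding this into $(\star)$ (applied with the test function $F \cdot e^{-2H(x^{-1}\cdot)}$ to absorb the extra factor, or equivalently rewriting $(\star)$ directly as a pushforward statement) yields
\[
\int_K F\bigl(K(xk)\bigr)\, dk = \int_K F(k')\, e^{-2H(x^{-1}k')}\, dk',
\]
which is the desired formula. The main obstacle is bookkeeping in the Iwasawa reassembly: keeping straight the commutation $n_v a_t = a_t n_{e^{-2t}v}$ and identifying the resulting factor with exactly $e^{-2H(xk)}$, as well as carefully extracting the identity $H(x K(x^{-1}k')) = -H(x^{-1}k')$; once this accounting is done, the rest is routine.
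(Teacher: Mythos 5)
Your proof is correct and complete. Note, however, that the paper does not prove this lemma: it imports it directly from Helgason, GGA, Lemma 5.19, so there is no internal proof to compare against. Your derivation—identifying the inverse $S_x(k')=K(x^{-1}k')$ and extracting the pivotal identity $H\bigl(xK(x^{-1}k')\bigr)=-H(x^{-1}k')$ by uniqueness of the $KAN$ factorization, obtaining the intermediate relation $\int_K F(k)\,dk=\int_K F(K(xk))\,e^{-2H(xk)}\,dk$ from left-invariance of Haar measure and the $KAN$ form of the Iwasawa integration formula, and then passing to the stated form by applying that relation to $F(\cdot)\,e^{-2H(x^{-1}\cdot)}$—is precisely the standard argument for this result, and the bookkeeping you flag (the commutation $n_v a_t=a_t n_{e^{-2t}v}$, the Jacobian factor $e^{-2H(xk)}$ from the $t$-shift, and the cancellation $e^{-2H(x^{-1}K(xk))}e^{-2H(xk)}=1$) all checks out.
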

   We require a connection between the Iwasawa decomposition of $G = \overline{N}AK$ and the Cartan decomposition. Str\"omberg \cite{Stromberg} previously employed a similar relation for noncompact-type symmetric spaces. In this context, Ionescu established the following result:
			\begin{lemma}$($\cite[Lemma 3]{Ionescu_2002}$)$\label{lemma:expression_of_v-a _r}
				If $ \ol v \in \ol N $ and $ r\geq 0 $, then 
				\begin{equation}
					[\ol v a_r]^+ = r +H(\ol v)+ E(\ol v, r), 
				\end{equation}
				where \begin{equation}
					0\leq E(\ol v, r)\leq 2 e^{-2r}.
				\end{equation}
			\end{lemma}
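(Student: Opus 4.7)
The plan is to reduce the lemma to an explicit $2 \times 2$ matrix calculation via the trace characterization of the Cartan parameter. For any $g \in G$ with Cartan decomposition $g = k_1 a_t k_2$, $t \geq 0$, one has $g g^{T} = k_1 a_{2t} k_1^{T}$, and since $k_1 \in \mathrm{SO}(2)$ this gives
\[
\tr(g g^{T}) = \tr(a_{2t}) = e^{2t} + e^{-2t}.
\]
Applying this with $g = \bar v a_r$ and computing the matrix product directly yields
\[
e^{2 [\bar v a_r]^{+}} + e^{-2 [\bar v a_r]^{+}} = (1 + v^2) e^{2r} + e^{-2r}.
\]
On the other hand, the Iwasawa formulae in \eqref{formula_for_SL2R} applied to $\bar v$ (for which $a = 1$ and $c = v$) give $e^{2 H(\bar v)} = 1 + v^2$. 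Setting $T := [\bar v a_r]^{+}$ and $H := H(\bar v) \geq 0$, these two facts combine to the key identity
\[
e^{2T} + e^{-2T} = e^{2 (r + H)} + e^{-2r}.
\]

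Both bounds on $E(\bar v, r) = T - r - H$ follow from this identity by elementary manipulation. For the lower bound, since $r, H \geq 0$ we have $e^{-2(r + H)} \leq e^{-2r}$, hence
\[
e^{2T} + e^{-2T} \geq e^{2(r + H)} + e^{-2(r + H)}.
\]
Monotonicity of $x \mapsto e^{2x} + e^{-2x}$ on $[0, \infty)$, together with $T \geq 0$ and $r + H \geq 0$, then forces $T \geq r + H$, i.e.\ $E \geq 0$. For the upper bound, dropping the nonnegative term $e^{-2T}$ on the left of the identity gives
\[
e^{2T} \leq e^{2(r + H)}\bigl( 1 + e^{-4r - 2H} \bigr),
\]
and taking logarithms and using $\log(1 + x) \leq x$ yields
\[
2 E = 2 T - 2(r + H) \leq \log\bigl( 1 + e^{-4r - 2H} \bigr) \leq e^{-4r - 2H} \leq e^{-2r},
\]
the last step using $r, H \geq 0$. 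Therefore $E \leq \tfrac{1}{2} e^{-2r} \leq 2 e^{-2r}$, which is even slightly sharper than claimed.

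There is no substantial obstacle: the statement is fundamentally a $2 \times 2$ matrix calculation. The only care required is to align the Iwasawa decomposition $G = \bar N A K$, which defines $H(\bar v)$, with the Cartan decomposition $G = K \overline{A^{+}} K$, which defines $[\,\cdot\,]^{+}$. The trace identity links these cleanly, after which monotonicity and the crude estimate $\log(1 + x) \leq x$ finish the argument.
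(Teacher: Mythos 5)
Your proof is correct. The paper itself does not supply a proof for this lemma---it cites \cite[Lemma 3]{Ionescu_2002} and moves on---so there is nothing internal to compare against. Your argument is a clean, self-contained verification that exploits the $2\times 2$ structure of $\mathrm{SL}(2,\R)$: the trace identity
\begin{equation*}
e^{2[\bar v a_r]^{+}} + e^{-2[\bar v a_r]^{+}} = \tr\bigl( (\bar v a_r)(\bar v a_r)^{T}\bigr) = (1+v^2)e^{2r} + e^{-2r}
\end{equation*}
pins down the Cartan parameter exactly, after which $e^{2H(\bar v)} = 1+v^2$ (from \eqref{formula_for_SL2R}, as the paper records) reduces everything to a scalar inequality. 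Both directions of the bound are handled soundly: monotonicity of $\cosh$ on $[0,\infty)$ gives $E \geq 0$, and the estimate $\log(1+x)\leq x$ together with $r,H \geq 0$ gives $E \leq \tfrac12 e^{-2r}$, which is even sharper than the stated $2e^{-2r}$. Worth noting: Ionescu proves this lemma in the generality of rank-one symmetric spaces of noncompact type, where no explicit matrix realization or trace formula is available, and his argument goes through the Iwasawa coordinates more abstractly (via relations between the $H(\cdot)$-function and the Cartan radial part). Your trace-based proof will not generalize beyond $\mathrm{SL}(2,\R)$, but for the purposes of this paper that is all that is needed, and the matrix calculation is considerably more transparent.
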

From the formula \eqref{formula_for_SL2R}, we have
\bes
e^{2 H(\ol v)} = \left( 1+ v^2\right), \quad \text{for all } v\in \R,
\ees
and so 
\be\label{eqn:H(v)-positive}
		 H(\ol v) \geq 0,  \,\, \text{ for all } \ol v \in \ol N.
\ee
Also, it is easy to see for any $ \epsilon_0 >0 $ that 
\begin{equation}\label{eqn:L1_norm_P(v-) (1+epsilon)_is_finite}
				\int_{\ol N}  e^{- (1+\epsilon_0)H(\ol v)} d\ol v =C_{\epsilon_0 }<\infty.
			\end{equation}
It is well known that the abelian group $ A $ acts as a dilation on $ \ol N  $, by the mapping
\begin{eqnarray*}
	\ol n\mapsto a \ol n a^{-1} \in \ol N.
\end{eqnarray*}
Moreover, if  $\delta_r$ is the dilation of $\ol N$, defined by
\begin{equation}\label{dil_delta}
\delta_r(\ol n):=  a_r \ol n a_{-r},
 \end{equation} 
then the following is true
\begin{equation}\label{eqn:dialation_formula_for_N-}
	\int_{\ol N} \mathfrak{h}(\delta_r(\ol n)) \,d\ol n =  e^{2 r} \int_{\ol N} \mathfrak{h}( \ol n)  \,d\ol n,
\end{equation}
for any integrable function $ \mathfrak{h} $ on $ \ol N $. We equip the set $\overline{N}A$ with the binary operation induced by this conjugation $\delta_{r}$ and refer to it as the semi-direct product $\overline{N} \ltimes A$. With the Iwasawa decomposition, as shown in \eqref{eqn:integral-decom-nbar-a}, we can establish an identification between $K$-right invariant functions on $G$ and functions on $\overline{N}A$, and it is known that the corresponding $L^{p}$-norms coincide. Additionally, we will make use of the fact that the Abel transform  
\begin{equation}\label{Abel_tran}
    \mathcal{A}\phi(r):= e^{r}\int_{\ol N} \phi(\overline{n}a_r) d\overline{n} 
\end{equation}
takes a $K$-biinvariant function $\phi$ to an even function on $\R$ (see \cite[p. 381]{Helgason_GA}).

\subsection{Spherical analysis of $(n,n)$ type functions}
		We are going to define Fourier transforms of suitable $(n,n)$ functions using representations of $G$ (see Chapter 9 of \cite{Barker}).  	Let $ \what M $ denote the equivalence classes of irreducible representations of $ M= \{  \pm I \} $, where $ I $ is the identity matrix. Then \[ \what M = \{ \tau_ +, \tau_-\}, \]
			where $ \tau_ +(\pm I)=1 $ and $ \tau_-(\pm I) =\pm 1 $. For each $\tau \in \what M$,  $\Z^{\tau} $ stands for the set of even integers for $\tau =\tau_+$, the set of odd integers for $\tau =\tau_-$ and $ -\tau $ will denote the opposite parity of $ \tau $.  
			\noindent We define \begin{align}\label{definition of Z(k)}
				\Gamma_n =\begin{cases} \lbrace
					k \mid 0 <k < n \text { and } k \in \Z^{-\tau} \rbrace \text{ if } n > 0 \\
					\lbrace k \mid n <k < 0\text { and } k \in \Z^{-\tau} \rbrace \text{ if } n < 0.\end{cases}
			\end{align}
		For $\lambda \in \C $ and $ \tau \in \what M $,  let $(\pi_{\tau, \lambda }, H_\tau)$ be the principal series representation of $ G $ given by
		\begin{equation}\label{definition of principal series representation}
			{\left(\pi_{\tau, \lambda}(x) e_n\right)(k)=e^{(i\lambda - 1)H(xk)} e_n\left(K(x^{-1}k^{-1})\right)^{-1}}
		\end{equation}
		for all $ x \in G, k\in K $, where {$H_\tau$}  is the subspace of $L^2(K)$ generated by the orthonormal set  $\{ e_n \mid n \in \Z^\tau \}$. This representation is unitary if and only if $\lambda \in \R$.   For $ \lambda =0  $, the representation $\pi_{\tau_-, 0}$  has two irreducible subrepresentations, the so called  {mock discrete series}. We will denote them by $D_+$ and $D_-$. The representation spaces of  {$D_+$ and $D_-$ contain  $e_n \in L^2(K)$ respectively for positive odd $n$'s and negative odd $n$'s}.   For each  $k\in\mathbb Z^\ast$ (set of nonzero integers), there is a discrete series representation $\pi_{ik}$, which occurs as a subrepresentation of $\pi_{\tau, i|k|}$, $k\in \Z\setminus \Z^{\tau}$   (see \cite[p.19]{Barker}).  We define for    $ k \in \Z^*  $
		\begin{equation*}
			\Z(k) = \begin{cases}\{ m \in \Z^{\tau} : m \geq k+1 \} \text{ if }  k\geq 1 \\
				\{ m \in \Z^{\tau} : m \leq k-1 \} \text { if }  k\leq -1.
			\end{cases}
		\end{equation*}
		For $n\in \Z^\sigma$,  the canonical matrix coefficient for the principal series
		\begin{equation}\label{definition of phi sigma lamda m, n}
			\phi_{\tau,\lambda}^{n,n}(x) := \left\langle \pi_{\tau, \lambda}(x) e_n, e_n \right\rangle  =\int_K e^{-(i\lambda+1)H(xk)} e_n(k^{-1}) \overline{e_n(K(xk)^{-1})} dk,  
		\end{equation}
		are functions of $ (n,n) $ type. For $k \in \Z^*$ and $n\in \Z(k)$ the canonical matrix coefficient of the discrete series representation is
            \begin{equation*} {\psi^{n,n}_{ik}(x):=\langle \pi_{ik}(x) e_n^k, e_n^k\rangle_k}, 
		\end{equation*}
		where ${e_n^k}$ are the renormalized basis and $\langle \,,\rangle_k$ is the renormalized inner product for $\pi_{ik}$, see \cite[p. 20]{Barker} for more details. These functions $\phi_{\tau,\lambda}^{n,n}$ and $\psi^{n,n}_{ik}$  are  also known as spherical functions of  $(n,n)$ type. We will denote $ H_k $ as the Hilbert space generated by $ \{ e_m^k : m \in \Z(k)\} $.   It is known  \cite[Prop. 7.3]{Barker} that 
         \begin{equation}\label{relation between psi n,n and phi {n,n}}
			\psi_{ik}^{n,n} = \phi_{\tau, i|k|}^{n,n},
		\end{equation} where $\tau \in \what{M}$ is determined by $ k \in \Z^{-\tau} $. Additionally, it is worth noting that $\phi_{\tau_+, \lambda}^{0,0}$ corresponds to the elementary spherical function commonly denoted as $\phi_\lambda$.

          Let $\Omega$ denote the Casimir element on $G$ (see \cite[(2.6)]{Barker}), which acts as a biinvariant differential operator on $G$. As stated in \cite[(4.7)]{Barker}, the smooth eigenfunctions $\phi_{\tau, \lambda}^{n,n}$ of $\Omega$ satisfy the differential equation:
    \begin{equation}\label{ef_phi_n,n}
    \Omega f = -\frac{\lambda^2+1}{4} f.
    \end{equation}
 
    Furthermore, the spherical functions ${\phi_{\tau,\lambda}^{n,n}}$ have the following well-known properties:
    \begin{enumerate}
    \item We have from \cite[Section 2.3, 4.1]{Rana}  $${\phi_{\tau,\lambda}^{n,n}}(a_t)={\phi_{\tau,\lambda}^{-n,-n}}(a_t)={\phi_{\tau,\lambda}^{n,n}}(a_{-t}), \quad \text{for all $t\in \R$.}$$
    \item For any fixed $x \in G$, $\lambda \mapsto {\phi_{\tau,\lambda}^{n,n}}(x)$ is an entire function.
    \item By utilizing the Cartan decomposition of $G$, it is easy to see that
    \begin{equation}\label{phi_n,n-lam}
     \phi_{\tau,\lambda}^{-n,-n}(x^{-1}) = {\phi_{\tau, \lambda }^{n,n}}(x) \quad \text{and} \quad {\phi_{\tau,\lambda}^{n,n}}={\phi_{\tau,-\lambda}^{n,n}},
    \end{equation}
    for all $x\in G, \lambda \in \R$. 
    \end{enumerate}

  We remark that we use a different parameterization of the representations and spherical functions from Barker \cite{Barker}. According to our definition in \eqref{definition of principal series representation}, the unitary dual of $\mathrm{SL}(2,\R)$ is $\R$, while according to Barker's convention, the unitary dual of $\mathrm{SL}(2,\R)$ is $i\R$. As a result, our $\pi_{\lambda}$ corresponds to his $\pi_{-i\lambda}$, and similarly, $\phi_{\tau,\lambda}^{n,n}$ and $\psi_{ik}^{n,n}$ are reparametrized accordingly.   This choice of parametrization offers a clearer analogy with the general semisimple Lie groups case, making our analysis more transparent.
     
   Let $n\in \Z^\tau.$ Then for a smooth compactly supported $(n,n)$ type function $f$  on $G$, the principal series Fourier transform of $f$ is defined by  \begin{equation*}
			\what{f_H}(\lambda) = \int_G f(x) {\phi_{\tau,\lambda}^{n,n}}(x^{-1}) dx,
		\end{equation*}
   for all $\lambda \in \R$, and  the discrete series Fourier transform is defined by 
        \begin{equation*}
			\what{f_B}(ik)  = \int_G f(x) {\psi_{ik}^{n,n}}(x^{-1}) dx,
		\end{equation*}
for all $k \in \Gamma_n$. Then   the inversion formula is given by \cite[Theorem 10.4]{Barker}:
\begin{equation}\label{inversion_n,n}
    f(x)= \frac{1}{4\pi^2} \int_{\R} \what {f}_H(\lambda) \phi^{n,n}_{\tau,\lambda}(x) \left|c^{n,n}_\tau(\lambda)\right|^{-2} d\lambda + \frac{1}{2\pi} \sum_{k \in \Gamma_n} \what{f}_B (ik) \psi^{n,n}_{ik}(x) |k|, 
\end{equation}
where $\tau \in \what M$ is determined by $n \in \Z^\tau$, and $c^{n,n}_\tau(\lambda)$ is given by \cite[(6.2)]{Barker}
{\begin{align}\label{defn_c_n,n}
  c^{n,n}_\tau(\lambda) =\frac{1}{\sqrt{\pi}}  \frac{\Gamma\left(\frac{i\lambda}{2}\right) \Gamma\left(\frac{1+i\lambda}{2}\right)}{\Gamma\left(\frac{1+i\lambda-|n|}{2}\right) \Gamma\left(\frac{1+i\lambda+|n|}{2}\right)},
\end{align}
for all $\lambda \in \C$. The functions $c^{n,n}_\tau$ are regarded as meromorphic functions on the complex plane. In the following, we identify the singular points and discuss the estimates of the function $c^{n,n}_{\tau}(-\lambda)^{-1}$.
\begin{lemma}\label{c^n,n-}
Let $\tau \in \widehat{M}$ and $n \in \mathbb{Z}^{\tau}$. The function $\lambda \rightarrow c^{n,n}_\tau(-\lambda)^{-1}$ is meromorphic on the complex plane and exhibits the following properties:
\begin{itemize}
    \item[(i)] The zeros of $c^{n,n}_\tau(-\cdot)^{-1}$ are simple and occur at $\lambda \in i\Z^{\tau_-}$ such that $\Im \lambda\leq 0$.
    \item[(ii)]  The function $c^{n,n}_\tau(-\cdot)^{-1}$  has simple poles at $\lambda \in i\Z^{-\tau}$ such that $\Im \lambda<-|n|$ or $0<\Im \lambda<|n|$.
     \item[(iii)] For $p>1$ and a fixed $N\in \N$, following estimate holds
			\begin{equation}\label{est: cminusla}
				\left|	\frac{d^\alpha}{d\lambda^\alpha} c^{n,n}_\tau(-\lambda)^{-1}\right | \leq C_\alpha(1+|\lambda|)^{1/2-\alpha},
			\end{equation}
			for all integers $ \alpha = 0,1,\ldots N$, and for all $ \lambda \in \C $ with  $ 0\leq \Im \lambda \leq \gamma_p $.
   \item[(iv)] Furthermore, if  $p=1$ and $n\in \Z^{\tau_-}$, then \eqref{est: cminusla} holds true. However, if $n\in \Z^{\tau_{+}}$, then   \eqref{est: cminusla} is valid   for all $\lambda  \in \C\setminus \textbf{B}(i)$ with $0\leq \Im \lambda\leq 1$, where $\textbf{B}(i)$ is any compact neighbourhood of $i$ in the complex plane.
\end{itemize}
\end{lemma}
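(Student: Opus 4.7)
The plan is to exploit the explicit formula
\begin{equation*}
c^{n,n}_\tau(-\lambda)^{-1} = \sqrt\pi \, \frac{\Gamma\bigl(\tfrac{1-i\lambda-|n|}{2}\bigr)\,\Gamma\bigl(\tfrac{1-i\lambda+|n|}{2}\bigr)}{\Gamma\bigl(\tfrac{-i\lambda}{2}\bigr)\,\Gamma\bigl(\tfrac{1-i\lambda}{2}\bigr)},
\end{equation*}
obtained by inverting and reflecting \eqref{defn_c_n,n}, together with two standard facts about the Gamma function: it is nowhere zero and has only simple poles, located at the non-positive integers; and it admits Stirling's asymptotic expansion.

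For parts (i) and (ii), I would enumerate the poles of each of the four $\Gamma$ factors and track which of them cancel. The denominator poles (candidates for zeros of $c^{n,n}_\tau(-\lambda)^{-1}$) sit at $\lambda = -ij$ for $j \in \Z_+$, coming from $\Gamma(-i\lambda/2)$ for $j$ even and from $\Gamma((1-i\lambda)/2)$ for $j$ odd. The numerator poles (candidates for poles of $c^{n,n}_\tau(-\lambda)^{-1}$) sit at $\lambda = -i(|n|+1+2m)$ and $\lambda = i(|n|-1-2m)$ for $m \in \Z_+$. A case distinction on the parity of $n$, which is encoded by $\tau$ through the definition of $\Z^{\tau}$, then identifies precisely which denominator poles survive (giving the zero-set claimed in (i)) and which numerator poles survive (giving the pole-set in (ii)). Simplicity in each case is automatic, since the surviving pole of a single $\Gamma$ factor is simple and the three remaining factors are finite and non-zero at the point under consideration.

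For the growth estimate (iii), the main tool is the asymptotic
\begin{equation*}
\frac{\Gamma(z+a)}{\Gamma(z+b)} = z^{a-b}\bigl(1 + O(|z|^{-1})\bigr), \qquad |z|\to\infty, \;\; |\arg z|\leq \pi - \delta,
\end{equation*}
valid in any strip with $|\Im \lambda|$ bounded. Pairing the four $\Gamma$ factors as $\Gamma(\tfrac{1-i\lambda+|n|}{2})/\Gamma(\tfrac{1-i\lambda}{2})$ and $\Gamma(\tfrac{1-i\lambda-|n|}{2})/\Gamma(\tfrac{-i\lambda}{2})$, the two ratios contribute $(-i\lambda/2)^{|n|/2}$ and $(-i\lambda/2)^{(1-|n|)/2}$ respectively, so
\begin{equation*}
c^{n,n}_\tau(-\lambda)^{-1} \sim \sqrt\pi \,(-i\lambda/2)^{1/2}, \qquad |\lambda|\to\infty,
\end{equation*}
which is exactly the $\alpha=0$ bound. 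The derivative estimates for $\alpha \geq 1$ follow either by differentiating the log of the $c$-function and using $\Gamma'/\Gamma = \psi$ together with $\psi(z) = \log z + O(|z|^{-1})$, or, more cleanly, by applying Cauchy's integral formula on disks of fixed radius to convert the $O(|\lambda|^{1/2})$ pointwise bound into the required $O(|\lambda|^{1/2-\alpha})$ bound for the $\alpha$-th derivative. The hypothesis $p>1$ ensures $\gamma_p < 1$, so the strip $0\leq \Im\lambda\leq\gamma_p$ lies strictly below the first possible pole at $\lambda=i$ identified in (ii), and the whole argument proceeds on a region where the $\Gamma$ ratios are bounded away from singularities.

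For (iv), the case $p=1$ is special because $\gamma_1 = 1$ and, by (ii), the pole at $\lambda = i$ is present precisely when $|n|\geq 2$ and $n\in \Z^{\tau_+}$, lying on the boundary $\partial S_1$. When $n\in \Z^{\tau_-}$ the nearest upper half-plane pole is at $\lambda = 2i$, well outside $S_1$, so the Stirling argument of (iii) applies verbatim on $\{0\leq \Im\lambda\leq 1\}$. When $n\in \Z^{\tau_+}$ one excises the compact neighborhood $\textbf{B}(i)$ and repeats the previous analysis on the complement, where all four $\Gamma$ factors remain bounded away from their poles. The main technical pitfall throughout the argument is the careful bookkeeping of which numerator pole cancels which denominator pole as a function of the parity of $n$ and of the congruence class of the imaginary coordinate of $\lambda$; this is routine but genuinely error-prone, and the verification is most delicate precisely at the boundary case $p=1$.
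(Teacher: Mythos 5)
Your approach to parts (i) and (ii) via direct pole bookkeeping of the four Gamma factors is correct, though the paper sidesteps this computation entirely by citing \cite[Proposition 6.1]{Barker}. For the growth estimate (iii) you diverge more substantially from the paper: where you apply Stirling's asymptotic directly to the full ratio of four Gamma functions, the paper exploits the recursion $c^{n,n}_\tau(-\lambda)^{-1} = -\frac{\lambda - i(n-1)}{\lambda + i(n+1)}\,c^{n-2,n-2}_\tau(-\lambda)^{-1}$ to factor out rational functions $p_k(\lambda)=\frac{\lambda-ik}{\lambda+ik}$ whose derivative bounds are elementary, and thereby reduces the whole lemma to the two base cases $n=0$ (where the estimate for $c^{0,0}_\tau$ is quoted from Ionescu) and $n=1$ (where Stirling is invoked once). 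Both routes are viable; the paper's is shorter because it leans on the literature, while yours is self-contained but requires the pole-cancellation bookkeeping you yourself flag as error-prone.

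There is one genuine error in your argument for the derivative bounds in (iii). You propose to obtain the $O\bigl((1+|\lambda|)^{1/2-\alpha}\bigr)$ estimate for the $\alpha$-th derivative ``by applying Cauchy's integral formula on disks of fixed radius'' to the pointwise bound $O\bigl(|\lambda|^{1/2}\bigr)$. This does not work: Cauchy on a disk of fixed radius $r$ gives $|F^{(\alpha)}(\lambda_0)| \leq \frac{\alpha!}{r^{\alpha}}\sup_{|z-\lambda_0|=r}|F(z)| = O\bigl((1+|\lambda_0|)^{1/2}\bigr)$, with no gain in the power as $\alpha$ increases, since $r$ does not grow with $|\lambda_0|$. (Making $r$ grow like $|\lambda_0|$ would push the contour out of the strip and into the poles on the imaginary axis.) The decay $(1+|\lambda|)^{1/2-\alpha}$ is genuinely more information than boundedness by $(1+|\lambda|)^{1/2}$, and must come from the structure of the Gamma function. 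Your other suggestion --- differentiating $\log c^{n,n}_\tau(-\lambda)^{-1}$ and using $\psi(z)=\log z + O(|z|^{-1})$ together with the fact that the Stirling remainder and all its derivatives decay at the expected rates --- is the correct mechanism and should be the one you carry out; this is essentially what both the paper's cited references and \cite{Titchmarsh} do. Part (iv) is handled correctly: you rightly identify that the only boundary pole at $\lambda=i$ in $S_1$ arises exactly when $n\in\Z^{\tau_+}\setminus\{0\}$ (the factor $p_1(\lambda)$ in the paper's language), and excising $\textbf{B}(i)$ removes it.
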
			
\begin{proof} We have (i) and (ii) according to \cite[Proposition 6.1]{Barker}. Now, considering $p>1$, we observe from \eqref{defn_c_n,n} that we may assume $n\geq 0$. Consequently, we can write, as shown in \cite[13.13]{Barker}:
\begin{align*}
	  c^{n,n}_\tau(-\lambda)^{-1}&= -\frac{\lambda -i(n-1)}{\lambda+ i(n+1)} \,  c^{n-2,n-2}_\tau(-\lambda)^{-1} .
\end{align*} 
Hence, for $n\geq 2$, using the above formula, we obtain 
\begin{align*}\label{cn_cm}
	  c^{n,n}_\tau(-\lambda)^{-1}= \left( \prod_{ k\in \Gamma_n} p_k(\lambda) \right) c^{m,m}_\tau (-\lambda)^{-1},
\end{align*} 
where $ p_k(\lambda) =  \frac{\lambda- ik}{\lambda + ik}   $, and $ m =0 $ if $ \tau =\tau_+ $, $ m=1 $ if $ \tau=\tau_- $.   
Using the Leibniz rule, we can easily see that $p_k(\lambda)$ satisfy the following inequalities for all $\lambda$, with $0\leq \Im \lambda \leq \gamma_p$:  $$  \left|	\frac{d^\alpha}{d\lambda^\alpha}p_k (\lambda)\right | \leq  C_\alpha(1+|\lambda|)^{-\alpha}$$
 for all integers $ \alpha = 0,1,\ldots N$. With this inequality in mind, it is sufficient to prove \eqref{est: cminusla} for $c^{0,0}_\tau(-\lambda)^{-1}$ and $c^{1,1}_\tau(-\lambda)^{-1}$. Finally, the required estimates of $c^{0,0}_\tau(-\lambda)^{-1}$ and its derivatives
 \begin{align*}
     \left|	\frac{d^\alpha}{d\lambda^\alpha} c^{0,0}_\tau(-\lambda)^{-1}\right | \leq C_\alpha(1+|\lambda|)^{1/2-\alpha},
 \end{align*}
 follow from \cite[(4.2)]{Ionescu_2002} (see also \cite[(A.2)]{Ionescu_2000}), the estimates of $c^{1,1}_{\tau}(-\lambda)^{-1}$ follows similarly using the Stirling formula \cite[Chapter 4]{Titchmarsh}. This, in turn, completes the proof of (iii).  
 
To prove (iv), we note that for $p=1$, the same proof is applicable when $n\in \mathbb{Z}^{\tau_-}$ since, in this situation, the meromorphic function $p_1(\lambda)$ does not occur, and thus, there is no singularity at $\lambda=i$. However, when $n\in \mathbb{Z}^{\tau_+}$, we  exclude a neighborhood of $i$ in the complex plane to address the singularity of $p_1(\lambda)$ at $\lambda=i$, and obtain \eqref{est: cminusla}.
\end{proof}
\begin{lemma}[{{\cite[Lemma 4.2]{Stanton_and_Tomas}}}]\label{lemma:est_of_|c(-lambda)|^-2} Let $N$  be any  fixed natural number. Then we have \begin{equation}\label{estimate of |c(-lambda)|^-2}
					\left|	\frac{d^\alpha}{d\lambda^\alpha} |c^{n,n}_\tau(\lambda)|^{-2}\right | \leq C_\alpha(1+|\lambda|)^{1-\alpha},
				\end{equation} 
				for all $ \lambda \in  \R $ and $ \alpha \in [0,N] $. 
			\end{lemma}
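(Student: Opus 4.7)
The plan is to reduce the estimate to the bounds on $c^{n,n}_\tau(-\lambda)^{-1}$ already established in Lemma \ref{c^n,n-}(iii), via the factorization
\[
|c^{n,n}_\tau(\lambda)|^{-2} \;=\; c^{n,n}_\tau(\lambda)^{-1}\cdot c^{n,n}_\tau(-\lambda)^{-1},\qquad \lambda\in\R.
\]
First I would justify this identity: from the explicit formula \eqref{defn_c_n,n} and the standard conjugation relation $\overline{\Gamma(z)}=\Gamma(\bar z)$ for the Gamma function, one sees that for real $\lambda$
\[
\overline{c^{n,n}_\tau(\lambda)} \;=\; \frac{1}{\sqrt{\pi}}\,\frac{\Gamma(-i\lambda/2)\,\Gamma((1-i\lambda)/2)}{\Gamma((1-i\lambda-|n|)/2)\,\Gamma((1-i\lambda+|n|)/2)} \;=\; c^{n,n}_\tau(-\lambda),
\]
so $|c^{n,n}_\tau(\lambda)|^{-2}= c^{n,n}_\tau(\lambda)^{-1}\,c^{n,n}_\tau(-\lambda)^{-1}$ on $\R$.

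Next I would invoke Lemma \ref{c^n,n-}(iii) with any fixed $p>1$, say $p=2$, so that $\gamma_p=0$ and the estimate is valid for all real $\lambda$:
\[
\left|\frac{d^k}{d\lambda^k}c^{n,n}_\tau(-\lambda)^{-1}\right|\;\leq\; C_k(1+|\lambda|)^{1/2-k},\qquad k=0,1,\ldots,N.
\]
Writing $h(\lambda):=c^{n,n}_\tau(\lambda)^{-1}=g(-\lambda)$ with $g(\mu):=c^{n,n}_\tau(-\mu)^{-1}$, the chain rule yields $h^{(k)}(\lambda)=(-1)^k g^{(k)}(-\lambda)$, hence the same pointwise bound $|h^{(k)}(\lambda)|\leq C_k(1+|\lambda|)^{1/2-k}$ transfers to the derivatives of $c^{n,n}_\tau(\lambda)^{-1}$.

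Finally I would expand by the Leibniz rule
\[
\frac{d^\alpha}{d\lambda^\alpha}|c^{n,n}_\tau(\lambda)|^{-2} \;=\; \sum_{k=0}^{\alpha}\binom{\alpha}{k}\frac{d^k}{d\lambda^k}c^{n,n}_\tau(\lambda)^{-1}\cdot \frac{d^{\alpha-k}}{d\lambda^{\alpha-k}}c^{n,n}_\tau(-\lambda)^{-1},
\]
and apply the two bounds factor-by-factor to obtain
\[
\left|\frac{d^\alpha}{d\lambda^\alpha}|c^{n,n}_\tau(\lambda)|^{-2}\right|\;\leq\; \sum_{k=0}^{\alpha}\binom{\alpha}{k}C_k C_{\alpha-k}(1+|\lambda|)^{1/2-k}(1+|\lambda|)^{1/2-(\alpha-k)}\;\leq\; C_\alpha(1+|\lambda|)^{1-\alpha},
\]
which is the desired inequality.

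There is no real obstacle here; the only small subtleties are the conjugation identity that supplies the factorization and the bookkeeping when transferring the derivative estimates from $c^{n,n}_\tau(-\lambda)^{-1}$ to $c^{n,n}_\tau(\lambda)^{-1}$ via the change of variable $\lambda\mapsto-\lambda$. Both are routine given Lemma \ref{c^n,n-}.
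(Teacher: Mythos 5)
Your argument is correct, but it takes a different route from the paper. The paper's proof is a one-line appeal to the explicit closed form of the Plancherel density from Barker, namely
\begin{equation*}
|c^{n,n}_\tau(\lambda)|^{-2} = \tfrac{\lambda \pi}{2}\tanh(\lambda\pi/2)\ \text{ for }\tau=\tau_+, \qquad |c^{n,n}_\tau(\lambda)|^{-2}= -\tfrac{\lambda\pi}{2}\coth(\lambda\pi/2)\ \text{ for }\tau=\tau_-,
\end{equation*}
from which \eqref{estimate of |c(-lambda)|^-2} follows by elementary calculus; notably this shows the density, and hence the constants $C_\alpha$, are independent of $n$. You instead factor $|c^{n,n}_\tau(\lambda)|^{-2}=c^{n,n}_\tau(\lambda)^{-1}c^{n,n}_\tau(-\lambda)^{-1}$ via the Schwarz reflection identity $\overline{c^{n,n}_\tau(\lambda)}=c^{n,n}_\tau(-\lambda)$ on $\R$ (an identity the paper itself uses later, in Section 9), then feed in Lemma \ref{c^n,n-}(iii) at $p=2$ (where $\gamma_2=0$, so the bound $(1+|\lambda|)^{1/2-k}$ holds on all of $\R$, and no real poles interfere by part (ii)), and conclude by Leibniz. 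All steps check out, and there is no circularity since Lemma \ref{c^n,n-} precedes this lemma and does not depend on it. The trade-off: the paper's route is shorter and gives $n$-uniform constants for free, while yours is more robust — it would survive in settings where no closed form for the Plancherel density is available and only $c$-function asymptotics (Stirling-type estimates) are at hand, which is how such bounds are usually obtained on general rank-one symmetric spaces; on the other hand your constants a priori inherit the $n$-dependence of Lemma \ref{c^n,n-}(iii).
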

		\begin{proof}
			  The lemma above follows  from the  explicit formula of $|c^{n,n}_\tau(\lambda)|^{-2}$, 
			 \begin{align}
			 	|c^{n,n}_\tau(\lambda)|^{-2} = \begin{cases}
			 		\frac{\lambda \pi} {2} \tanh (\lambda \pi /2) , \text{ if } \tau =\tau_+\\
			 			\frac{-\lambda \pi}{2} \coth (\lambda \pi /2) , \text{ if } \tau =\tau_-,
			 	\end{cases}
			 \end{align}
		 which can be found in \cite[(10.1)]{Barker}.
		\end{proof}

   \subsection{H\"ormander norm of symbols}\label{hormander}
Consider an open subset $U$ of the complex plane $\mathbb{C}$. We use $\mathcal{H}^{\infty}(U)$ to denote the space of bounded holomorphic functions in $U$, which is equipped with the supremum norm. Now, let $m$ be a bounded holomorphic function defined on $U$, which is continuous on the closure $\overline{U}$, including its derivatives up to the $k$th order. In \cite{FR}, the authors introduced the Mikhlin-H\"ormander norm at infinity of order $k$ on $\overline{U}$, which is given by the following expression:
\begin{equation}
    \| m\|_{\mathcal{MH}( \overline{U},k)}=\max\limits_{\alpha\in \{0,\ldots, k\}} \sup\limits_{\lambda \in \overline{U}} (1+|\lambda|)^{\alpha} \left|  \frac{\partial^\alpha}{\partial \lambda^{\alpha}} m( \lambda)  \right|.
\end{equation}
Now, let us consider a smooth function $\sigma: \mathbb{R} \times U \rightarrow \mathbb{C}$ such that, for each fixed $s\in \mathbb{R}$, the map $\lambda \mapsto \sigma(s, \lambda)$ defines a bounded holomorphic function on $U$ and remains continuous on its closure $\overline{U}$. In this context, we define the H\"ormander norm of such symbols $\sigma$ at infinity of order $(j,k)$ on $\mathbb{R} \times \overline{U}$ as follows:
 \begin{equation}\label{eqn_defn_Hormander_U}
     \| \sigma\|_{\mathcal{H}{(\overline{U},j,k)}}=\max\limits_{\substack{\alpha\in \{0,\dots, k\}\\ \beta\in \{0,\dots, j\}}}\, \sup\limits_{\substack{\lambda \in \overline{U},\, s\in \R}} (1+|\lambda|)^{\alpha} \left|  \frac{\partial^\beta}{\partial s^{\beta}}\frac{\partial^\alpha}{\partial \lambda^{\alpha}} \sigma(s, \lambda)  \right|.
 \end{equation}
 Slightly abusing this notation, we also write:
  \begin{equation}\label{eqn_defn_Hormander_R}
     \| \sigma\|_{\mathcal{H}{(\R,j,k)}}=\max\limits_{\substack{\alpha\in \{0,\dots, k\}\\ \beta\in \{0,\dots, j\}}}\, \sup\limits_{\substack{\lambda \in \R,\, s\in \R}} (1+|\lambda|)^{\alpha} \left|  \frac{\partial^\beta}{\partial s^{\beta}}\frac{\partial^\alpha}{\partial \lambda^{\alpha}} \sigma(s, \lambda)  \right|.
 \end{equation}
 We say $\sigma$ belongs to $\mathcal{H}{(\overline{U},j,k)}$, if $\sigma :\R \times U\rightarrow \C$ is a smooth function and 
$ \| \sigma\|_{\mathcal{H}{(\overline{U},j,k)}}<\infty$.
\section{Main results}\label{main_result}

In this section, we will present our main results concerning the pseudo-differential operators for $(n,n)$ type functions on $G=\mathrm{SL}(2,\R)$. Before doing so, we  first recall the multiplier results of Ricci and Wr\'{o}bel in this setting.

Throughout this article, we assume that $n\in \Z$ is a fixed integer. Let $m: \mathbb{R}\cup i\Gamma_n \rightarrow \C$ be a bounded measurable function. Then the associated Fourier multiplier operator $T_m$ for $(n,n)$ type functions on $G$ is defined through the inversion formula (see \eqref{inversion_n,n}) as follows:
\begin{align}\label{mult_(n,n)}
T_m f(x) &= \frac{1}{4\pi^2} \int_{\R} m(\lambda) \what {f_H}(\lambda) \phi_{\tau,   \lambda}^{n,n} (x) |c^{n,n}_{\tau}(\lambda)|^{-2} d\lambda 
          +\frac{1}{2\pi}\sum_{k\in \Gamma_n} m(ik) \what{ f_B}(ik) \psi_{ik}^{n,n}(x) |k|.
\end{align}
 In their work \cite{Ricci}, the authors observe that if $T_m$ is bounded on $L^p(G)_{n,n}$ for some $p\in (1,\infty) \setminus \{2\}$, then the multiplier $m$, which was initially defined on $\R \times i\Gamma_n$, must necessarily extend to a bounded even holomorphic function in $S_{p}^{\circ}$. Moreover, they demonstrated that 
 (\cite[Proposition 5.2]{Ricci})
 \begin{align*}
     \|m\|_{\mathcal{H}^{\infty}(S_p^{\circ})}\leq \| T_m\|_{L^p\rightarrow L^p}.
 \end{align*}
With the notations introduced in Section \ref{hormander}, the authors proved the following analogue of the H\"ormander-Mikhlin multiplier theorem.
\begin{theorem}[{{\cite[Theorem 5.3]{Ricci}}}]\label{thm_RW} Fix $1<p<\infty$, $p\not =2$. Assume that the multiplier $m : S_p\cup i\Gamma_n \rightarrow \C $ is a function satisfying the following properties:
\begin{itemize}
    \item[(i)] The multiplier $m$ is a bounded even holomorphic function in $S_{p}^{\circ}$.
    \item[(ii)]  $\| m\|_{\mathcal{MH}(S_p,2)}<\infty$.
\end{itemize} Then the corresponding Fourier multiplier operator $T_m$ is bounded on $L^p(G)_{n,n}$. Furthermore, there exists a constant $C_{p,n}>0$ such that
\begin{align*}
    \| T_m f\|_{L^p(G)_{n,n}} \leq C_{p,n} \left(  \| m\|_{\mathcal{MH}(S_p,2)}<\infty  + \sum_{k \in \Gamma_n}  |k| \,  |m(ik)| \right) \|f\|_{L^p},
\end{align*}
for all $f \in L^p(G)_{n,n}$.

\end{theorem}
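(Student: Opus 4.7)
The plan is to decompose $T_m$ according to the Plancherel inversion formula \eqref{inversion_n,n} into its discrete and continuous components, $T_m = T_m^{\text{disc}} + T_m^{\text{cont}}$, and treat them separately; since $S_p = S_{p'}$ and the adjoint of $T_m$ is the multiplier operator with symbol $\overline{m(\overline{\lambda})}$, by duality one may restrict to the case $1 < p < 2$.

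For the discrete part $T_m^{\text{disc}} f = \frac{1}{2\pi}\sum_{k \in \Gamma_n} m(ik)\,\widehat{f_B}(ik)\,\psi_{ik}^{n,n}(\cdot)\,|k|$, the set $\Gamma_n$ from \eqref{definition of Z(k)} is finite, so it suffices to bound each summand. Each term is a rank-one operator $f \mapsto \widehat{f_B}(ik)\,\psi_{ik}^{n,n}$ scaled by $m(ik)|k|$, whose $L^p \to L^p$ norm is controlled by $\|\psi_{ik}^{n,n}\|_{L^p}\,\|\psi_{ik}^{n,n}\|_{L^{p'}}$ via H\"older's inequality applied to the definition of $\widehat{f_B}(ik)$. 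Standard $L^q$-decay properties of the matrix coefficients of the discrete series $\pi_{ik}$, refined in Lemma \ref{lem_psi_k_Lp}, imply that these norms are finite uniformly for $k \in \Gamma_n$, producing the contribution $\sum_{k \in \Gamma_n} |k|\,|m(ik)|\,\|f\|_{L^p}$ to the estimate.

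For the continuous part, the plan is to realise $T_m^{\text{cont}}$ as a convolution $f \ast \mathcal{K}_m$ with
\begin{equation*}
\mathcal{K}_m(x) = \frac{1}{4\pi^2}\int_{\R} m(\lambda)\,\phi_{\tau,\lambda}^{n,n}(x)\,|c_\tau^{n,n}(\lambda)|^{-2}\,d\lambda,
\end{equation*}
and to split $\mathcal{K}_m = \mathcal{K}_m^{\text{loc}} + \mathcal{K}_m^{\text{glo}}$ via a smooth cut-off in the Cartan radial variable $t$. For the local part, one exploits that near the identity of $G$ the Haar measure is comparable to Lebesgue measure and the spherical functions behave like modified exponentials; combining the polynomial bounds of Lemma \ref{lemma:est_of_|c(-lambda)|^-2} on $|c_\tau^{n,n}|^{-2}$ with the hypothesis $\|m\|_{\mathcal{MH}(S_p,2)} < \infty$, the local convolution passes through the Abel transform \eqref{Abel_tran} to a one-dimensional Euclidean Mikhlin-H\"ormander multiplier, whose $L^p(\R)$-boundedness for $1 < p < \infty$ transfers back to $G$.

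The main obstacle is the global piece $\mathcal{K}_m^{\text{glo}}$, where the exponential Haar measure growth $\Delta(t) \sim e^{2t}$ must be compensated. Following Ionescu's strategy in \cite{Ionescu_2002}, I would use the holomorphic extension of $m$ to $S_p^{\circ}$ and shift the $\lambda$-contour from $\R$ to $\R - i\gamma_p$. This shift is legitimate because the only singularities of $|c_\tau^{n,n}(\lambda)|^{-2}$ inside $S_p^{\circ}$ are simple poles at the discrete series parameters $i\Z^{-\tau}$ (Lemma \ref{c^n,n-}), whose residues are precisely the terms already accounted for in $T_m^{\text{disc}}$. On the shifted contour, $\phi_{\tau,\lambda - i\gamma_p}^{n,n}$ carries an extra decay factor $e^{-\gamma_p t}$ on the Cartan ray coming from its Harish-Chandra-type expansion, bringing the effective growth of the kernel below the critical threshold $e^{-2t/p'}$. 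Coupling this with the Herz majorizing principle for $(n,n)$ type functions and the Kunze-Stein convolution inequality $L^{p,1}(G) \ast L^{p,\infty}(G) \hookrightarrow L^{p}(G)$ yields the $L^p$-boundedness of $T_m^{\text{glo}}$, and hence of $T_m$. Controlling the derivatives of the shifted integrand via Lemma \ref{c^n,n-} costs at most two derivatives of $m$ on $S_p$, which is exactly the information encoded in $\|m\|_{\mathcal{MH}(S_p,2)}$.
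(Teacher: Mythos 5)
The paper does not prove this statement: Theorem~\ref{thm_RW} is quoted verbatim from \cite[Theorem 5.3]{Ricci} as background, and no proof is given in this article. So there is no internal argument to compare against, only what the introduction and Sections~\ref{sec_dis_part}--\ref{sec_glo_par} reveal in passing about the Ricci--Wr\'obel method, which you should note before drafting a proof.

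Measured against that description, your sketch has the right discrete/local/global architecture, but it deviates from \cite{Ricci} in two places and is imprecise in a third. For the discrete part you use a rank-one bound combined with the $L^{q}$-integrability of $\psi_{ik}^{n,n}$ (Remark~\ref{lem_est_of_psi_k}, Lemma~\ref{lem_psi_k_Lp}); the paper states explicitly that \cite{Ricci} instead controlled this term by the Kunze--Stein convolution inequality, and that the matrix-coefficient-decay route is the \emph{new} strategy this paper introduces because the pseudo-differential operator is not a convolution. Both work for multipliers, but your proposal is implicitly borrowing the present paper's argument, not reproducing the cited one. For the local part, saying the kernel ``passes through the Abel transform to a one-dimensional Euclidean Mikhlin--H\"ormander multiplier'' is too loose: the Abel transform is not $L^p$-bounded, so it cannot carry a Euclidean multiplier bound back to $G$ on its own. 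What \cite{Ricci} and \cite{Ionescu_2000} actually use is the Coifman--Weiss transference principle for convolution operators; the Abel transform identifies the underlying Euclidean symbol, but transference is the mechanism that transports the $L^p$ bound, and it is precisely this step that the present paper generalizes to non-convolution settings. Finally, your claim that the residues picked up when shifting the $\lambda$-contour to $\R - i\gamma_p$ ``are precisely the terms already accounted for in $T_m^{\text{disc}}$'' is not correct here: for $1<p<\infty$, $p\neq 2$, one has $\gamma_p<1$, and Lemma~\ref{c^n,n-} places all poles of $c^{n,n}_{\tau}(-\lambda)^{-1}$ at integer imaginary parts, hence strictly outside $S_p^{\circ}$. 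The contour shift therefore collects no residues at all; the discrete series summand is a separate piece of the Plancherel inversion formula~\eqref{inversion_n,n}, not a residue of the continuous integral. The pole at $\lambda=i$ only becomes relevant at $p=1$, which Lemma~\ref{c^n,n-}(iv) handles separately and is outside the range of this theorem.
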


Inspired by the results on multipliers discussed above, we naturally become interested in exploring the consequences of replacing multipliers with a more general symbol. Specifically, we aim to determine the conditions under which the associated pseudo-differential operator remains bounded. In the Euclidean space, these conditions often involve considering the regularity of the symbol $ \sigma(x,\lambda) $ with respect to both $x$ and $\lambda$, its growth properties, and its behavior at infinity. Thus finding the appropriate conditions on $ \sigma(x,\lambda) $ becomes a fundamental aspect of our analysis.  In this regard, we formally define the pseudo-differential operators in our setting to present our results. 

Let $\sigma : G\times \R\cup i\Gamma_n \rightarrow \C$ be a suitable function. We define  the associated pseudo-differential operator  $\Psi_{\sigma}$ for $(n,n)$ type functions on $G$  through the inversion formula (see \eqref{inversion_n,n}) as follows:
\begin{equation}\label{pdo_n,n_intro}
    \Psi_{\sigma} f(x) = \frac{1}{4\pi^2} \int_{\R} \sigma (x,\lambda) \what {f_H}(\lambda) \phi_{\tau,   \lambda}^{n,n} (x) |c^{n,n}_{\tau}(\lambda)|^{-2} d\lambda  + \frac{1}{2\pi} \sum_{k\in \Gamma_n} \sigma(x,ik) \what{ f_B}(ik) \psi_{ik}^{n,n}(x) |k|,
\end{equation}
 where $\tau $ is determined by $n\in \Z^{\tau}$.
We now present one of our main results on pseudo-differential operators alluded to in the introduction.
 \begin{theorem}\label{thm:pdo_for_n,n_type}
		Let  $ p\in (1,\infty)\setminus\{2\}$.  Suppose that  $ \sigma: G \times  S_{p}\cup i\Gamma_n \rightarrow \C  $ is a function satisfying the
        following properties: 
\begin{itemize}
    \item[(i)] For each $\lambda\in S_p\cup\, i\Gamma_n$, $ x \mapsto \sigma(x,\lambda) $  is a $K$-biinvariant function on $ G $ and $\|\sigma\|_{L^{\infty}(G\times i \Gamma_n)}$ is finite.
   \item[(ii)] For each $x \in G$, $ \lambda \mapsto \sigma(x,\lambda) $ is an even holomorphic function on the strip $S_p^\circ$.  
  \item[(iii)]  For each $\overline{v} \in \overline{N}$, the function $(s,\lambda) \mapsto \sigma_{\overline{v}}(s,\lambda):=\sigma(\overline{v} a_s, \lambda)$ belongs to ${\mathcal{H}{(S_p,2,2)}}$ and  \begin{equation*}
       \sup_{\ol v\in \ol {N}}\| \sigma_{\overline{v}}\|_{\mathcal{H}{(S_p,2,2)}} <\infty,
  \end{equation*}
        where $\| \sigma\|_{\mathcal{H}{(S_p,2,2)}}$ is defined  as in \eqref{eqn_defn_Hormander_U}.
  \end{itemize}
  Then the operator $ \Psi_\sigma $ extends to a bounded operator on $ L^p{(G)}_{n,n} $ to itself for all $ p\in (1,\infty)\setminus\{2\}$. Moreover, there exists a constant
            $C_{p,n}>0$ such that
        \begin{align*}
            \|\Psi_{\sigma} f\|_{L^p(G)}\leq C_{p,n} \left( \sup\limits_{\overline{v}\in \overline{N}}   \| \sigma_{\overline{v}}\|_{\mathcal{H}{(S_p,2,2)}} + \|\sigma \|_{L^{\infty}(G \times i\Gamma_n)} \right)  \|f\|_{L^p(G)},
        \end{align*}
    for all $f \in L^p(G)_{n,n}.$ 
    \end{theorem}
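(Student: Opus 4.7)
The plan is to follow the decomposition sketched in the introduction. Using the inversion formula \eqref{inversion_n,n} we split
\begin{equation*}
\Psi_\sigma = \Psi_\sigma^{\text{dis}} + \Psi_\sigma^{\text{cts}},
\end{equation*}
where the discrete part corresponds to the sum over $k \in \Gamma_n$ and the continuous part corresponds to the principal series integral. The continuous part is then further decomposed, using a radial cutoff in the Cartan variable, as
\begin{equation*}
\Psi_\sigma^{\text{cts}} = \Psi_\sigma^{\text{loc}} + \Psi_\sigma^{\text{glo}}.
\end{equation*}
The heart of the proof is to establish $L^p(G)_{n,n}$-boundedness of each of the three pieces separately, with norms controlled by the quantity appearing in the theorem.

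For $\Psi_\sigma^{\text{dis}}$, I would use the $K$-biinvariance of $x \mapsto \sigma(x,ik)$ together with $\|\sigma\|_{L^\infty(G\times i\Gamma_n)} < \infty$ to express this operator, for each of the finitely many $k \in \Gamma_n$, as a bounded multiple of $x \mapsto \psi_{ik}^{n,n}(x)\langle f, \psi_{ik}^{n,n}(x^{-1}\cdot)\rangle$. The required $L^p$-bound then reduces to the weak-$L^p$ membership of $\psi_{ik}^{n,n}$, which is exactly the content of Lemma \ref{lem_psi_k_Lp} alluded to in the introduction; an application of Hölder/O'Neil on Lorentz spaces and summation over the finite set $\Gamma_n$ will conclude this piece.

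For the local part $\Psi_\sigma^{\text{loc}}$, I would use the Iwasawa integration formula \eqref{eqn:integral-decom-nbar-a} to write the kernel of $\Psi_\sigma^{\text{loc}}$ as an integral over $\overline{N}A$, interpreting $\overline{N} \ltimes A$ via the dilation action \eqref{dil_delta}. After identifying the kernel with a singular-integral kernel parametrised by $\overline{v} \in \overline{N}$, the symbols that appear are precisely the functions $\sigma_{\overline{v}}(s,\lambda) = \sigma(\overline{v}a_s,\lambda)$. The weak-type Coifman--Weiss transference principle (Section~\ref{sec_transference_principle_weak}) then bounds $\|\Psi_\sigma^{\text{loc}}\|_{L^p \to L^p}$ by the supremum, over $\overline{v} \in \overline{N}$, of the $L^p(\R) \to L^p(\R)$ norms of the corresponding Euclidean pseudo-differential operators with symbol $\sigma_{\overline{v}}$. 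Hypothesis (iii), together with Theorem \ref{thm:pdo_for_family_of_symbols}, then gives a bound by $\sup_{\overline{v}} \|\sigma_{\overline{v}}\|_{\mathcal{H}(S_p,2,2)}$.

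For the global part $\Psi_\sigma^{\text{glo}}$, convolution arguments are not available, so I would rewrite the operator using the functional identity for $\phi_{\tau,\lambda}^{n,n}$ derived in Section \ref{sec_phi_m,n} to express its kernel as an integral against $e^{(i\lambda-1)H(\cdot)}$. Using the holomorphy of $\sigma(x,\cdot)$ on $S_p^\circ$ guaranteed by hypothesis (ii), I would shift the contour from $\R$ to $\R \pm i\gamma_p$ (modulo the possible singularity issues handled by Lemma \ref{c^n,n-}(iv)), gaining an exponential factor $e^{-\gamma_p t}$ that beats the volume growth $\Delta(t) \sim e^{2t}$ on the global region once combined with \eqref{eqn:L1_norm_P(v-) (1+epsilon)_is_finite}. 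Combining this quantitative kernel estimate with the fact that the Abel transform \eqref{Abel_tran} sends $K$-biinvariant functions to even functions on $\R$, together with a standard $L^p$-$L^{p'}$ duality (permitted since $S_p = S_{p'}$), yields the desired $L^p$-bound for $\Psi_\sigma^{\text{glo}}$.

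The main obstacle is the local part: multipliers transfer to Euclidean convolution operators via the classical Coifman--Weiss principle for convolution operators, but pseudo-differential operators depend genuinely on the base point and are not convolution operators, so the transference must handle a family of singular integral operators rather than a single one. Establishing the weak-type form of this principle in Section \ref{sec_transference_principle_weak} in a form flexible enough to extract a family of Euclidean pseudo-differential operators whose symbols are exactly $\sigma_{\overline{v}}$, with uniform $\overline{v}$-control, is the new technical ingredient that makes the whole argument work.
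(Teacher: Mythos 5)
Your proposal follows essentially the same route as the paper: the same three-fold decomposition into discrete, local and global parts, transference to Euclidean pseudo-differential operators for the local part, and the Harish-Chandra expansion with contour shift plus the Abel-transform/duality argument for the global part. The only imprecision worth flagging is that the Euclidean symbols produced by the transference are not literally $\sigma_{\overline{v}}$ but are built from $\sigma$ via the Plancherel density and the Bessel expansion of $\phi^{n,n}_{\tau,\lambda}$ near the identity; they satisfy the hypotheses of Theorem \ref{thm:pdo_for_family_of_symbols} with constants controlled by $\sup_{\overline{v}}\|\sigma_{\overline{v}}\|_{\mathcal{H}(\R,2,2)}$, which is what the argument actually needs.
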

\begin{remark}
    \begin{enumerate}        
        \item  We recall that in the case of multipliers, the holomorphic extension property of the multiplier is necessary for the multiplier operator to be bounded on $L^p(G)_{n,n}$. Taking this into account, along with the results on pseudo-differential operators in Euclidean spaces, it is natural to assume that the symbol satisfies the holomorphicity condition stated in Theorem \ref{thm:pdo_for_n,n_type} to establish the boundedness of the associated pseudo-differential operator.
        \item  Next, we compare Theorem \ref{thm:pdo_for_n,n_type} with the corresponding result on rank one symmetric spaces of noncompact type. In \cite[Theorem 1.6]{PR_PDO_22}, the authors established the $L^{p}$-boundedness (for $p\in(1,\infty)\setminus\{2\}$) of the pseudo-differential operators on symmetric spaces by assuming, among other things, the following condition:
        \begin{equation}\label{eqn:hypothesis_of_sigma_sym}
        \left| \frac{\partial^\beta}{{\partial s}^\beta}\frac{\partial^\alpha}{\partial \lambda ^\alpha} \sigma (g a_s,\lambda)\right| \leq {C_{\alpha,\beta}}{(1+|\lambda| )^{-\alpha}},
        \end{equation}
            for all $ g \in G, s\in \R$ and $ \lambda \in S_p $, where the order of the mixed partial derivatives is up to a prescribed order.
            In our case, due to the $K$-biinvariant hypothesis on the symbol, we can observe that the corresponding condition \eqref{eqn:hypothesis_of_sigma_sym} for the symbol in the symmetric space simplifies to hypothesis (iii) of Theorem \ref{thm:pdo_for_n,n_type}. However, there is a difference in the behavior of our integral kernel $\mathcal{K}$ of the operator $\Psi_{\sigma}$ compared to the one in symmetric spaces. Our kernel $\mathcal{K} $ is no longer a $K$-biinvariant function with respect to the second variable, which means that the argument used in \cite{PR_PDO_22} cannot be directly applied in our setting.
        
        \item We only need regularity condition on the space variable of the symbol $\sigma(x,\lambda)$ when $\lambda$ lies on the real line. There is no need for any regularity condition on the space variable $x$ for all $\lambda \in S_p$. Please see Remark \ref{rem_glo}.

        \item  As the spherical functions are of $(n,n)$ type, we observe from \eqref{pdo_n,n_intro} that for any given $f \in C_c^{\infty}(G)_{n,n}$, if $\sigma(\cdot,\lambda)$ is any fixed $K$-type function then  $\Psi_\sigma(f)$ cannot be of $(n,n)$ type unless $\sigma(\cdot,\lambda)$ is a $K$-biinvariant function. This demonstrates that the $K$-biinvariant nature of the symbol is necessary to establish the boundedness of $\Psi_\sigma$ in $L^p(G)_{n,n}$.
    \end{enumerate}
\end{remark}

We now shift our focus to the boundedness problem of the pseudo-differential operator $\Psi_{\sigma}$ associated with non-smooth symbols in the space variables. Building upon the works of Kenig and Staubach \cite{carlos_07}, we introduce the symbol class $\mathcal{S}^m_{\varrho,\infty}(S_p)$ for rough symbols in our context.

The symbol class $\mathcal{S}^m_{\varrho,\infty}(S_p)$, where $1<p<\infty$, $m\in \R$, and $0\leq\varrho \leq 1$, includes all functions $\sigma : G \times S_p \cup i\Gamma_n \rightarrow \C$, that satisfy the following properties:
\begin{enumerate}
    \item For each $\lambda\in S_p\cup i\Gamma_n$, the function $x\mapsto \sigma(x,\lambda)$ is $K$-biinvariant, measurable on $G$ and $\|\sigma\|_{L^{\infty}(G\times i \Gamma_n)}<\infty$.
    \item For each $x\in G$, the function $\lambda \mapsto \sigma(x,\lambda)$ is an even holomorphic function on $ S_p^\circ$ and satisfies the differential inequalities:
\begin{align}\label{nonreg_ineq}
\left\| \frac{\partial^{\alpha}}{\partial \lambda ^\alpha} \sigma(x,\lambda) \right\|_{L^{\infty}_x}\leq C_{\alpha} (1+|\lambda|)^{m-\varrho\alpha},
\end{align}
for all $\lambda \in S_p$, $\alpha \in \N$, where $C_{\alpha}>0$ are constants. 
\end{enumerate}
Additionally, we define the symbol class $\mathcal{S}^m_{\varrho,\infty}(S_1)$, which consists of functions $\sigma : G \times S_1 \rightarrow \C$ that satisfy properties (1) and (2) for $p=1$, and further have the property $\frac{\partial^\alpha}{\partial \lambda^{\alpha}}\sigma(x,\lambda)|_{\lambda=i}=0,$  for all $\alpha\in \{0,1,2\}.$

    \begin{exmp} 
       Let $1<p<\infty$.  For $\nu \in \C$, let us define
        $$\sigma_{\nu}(g, \lambda) = \left(\frac{\lambda^2+1}{4}\right)^{i[g]^+ - i\nu}, \quad g\in G, \lambda \in S_p,$$
    where we recall that $[g]^+$ denotes the $\R^+$ component of $g$ in the Cartan decomposition (see Section \ref{sec:The_group_SL(2,R)}). By a simple computation, it follows that $\sigma_{\nu} $ satisfies \eqref{nonreg_ineq}. Now, to check the other condition, we observe that $g \mapsto [g]^{+}$ is a $K$-biinvariant function. Thus, we have  $\sigma_{\nu} \in  \mathcal{S}^{ 2\Im \nu}_{1,\infty}(S_p)$, for all $p \in (1,\infty)\setminus \{2\}$.  More generally, one can consider the following class of symbols of the form 
 $$\sigma_{\nu}(g, \lambda) = \left(\frac{\lambda^2+1}{4}\right)^{i\eta(g) - i\nu}, \quad g\in G, \,\lambda \in S_p,$$
where   $\eta$ is any real-valued $K$-biinvariant function on $G$.
    \end{exmp}
    
We now present a boundedness result for the associated pseudo-differential operator $\Psi_\sigma$ when $\sigma \in \mathcal{S}^m_{\varrho,\infty}(S_p)$, which serves as an analogue of Theorem \ref{thm_carlos} established by Kenig and Staubach \cite{carlos_07} in the Euclidean spaces.
    
    \begin{theorem}\label{thm_nonreg}
        Let $p\in [1,2)\cup (2,\infty)$. Suppose that $m<0$ and $\sigma= \sigma (x,\lambda) \in  \mathcal{S}^m_{1,\infty}(S_p)$. Then we have the following:
        \begin{enumerate}
            \item The operator $\Psi_{\sigma}$ is a bounded operator from $L^p(G)_{n,n}$ to itself for all $p\in (1,2)\cup (2,\infty)$.
            \item  When $p=1$ and $n\in \Z^{\tau_-}\cup \{0\}$, then $\Psi_{\sigma}$  is a bounded operator from $L^1(G)_{n,n}$ to itself. If $n\in \Z^{\tau_+}\setminus\{0\}$, $\Psi_{\sigma}$   is a weak type $(1,1)$-bounded operator from $L^1(G)_{n,n}$ to $L^{1,\infty}(G)_{n,n}$.
            \end{enumerate}
            \end{theorem}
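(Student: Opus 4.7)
The plan is to follow the three-part decomposition $\Psi_\sigma = \Psi_\sigma^{\text{dis}} + \Psi_\sigma^{\text{loc}} + \Psi_\sigma^{\text{glo}}$ introduced for the smooth-symbol theorem, and to verify for the rough symbol $\sigma \in \mathcal{S}^m_{1,\infty}(S_p)$ that each piece can still be handled. The decisive observation, already flagged at the end of the introduction, is that only the local part $\Psi_\sigma^{\text{loc}}$ requires smoothness of $\sigma$ in the space variable, while $\Psi_\sigma^{\text{dis}}$ and $\Psi_\sigma^{\text{glo}}$ are controlled by $L^\infty_x$-bounds on $\partial^\alpha_\lambda \sigma$ together with spectral information (asymptotics of $\psi_{ik}^{n,n}$, growth of $|c^{n,n}_\tau|^{-2}$, holomorphicity in $\lambda$). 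Thus the arguments of Sections \ref{sec_dis_part} and \ref{sec_glo_par} apply almost verbatim for those two pieces, once one reads off the necessary estimates from the class $\mathcal{S}^m_{1,\infty}(S_p)$.

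For the local part $\Psi_\sigma^{\text{loc}}$ the plan is to invoke the generalized Coifman--Weiss transference principle of Section \ref{sec_transference_principle_weak}, exactly as in Section \ref{Analysis on the local part}. This reduces the boundedness of $\Psi_\sigma^{\text{loc}}$ on $L^p(G)_{n,n}$ to the uniform $L^p(\R)$-boundedness (or weak-type $(1,1)$, when $p=1$) of a family of one-dimensional Euclidean pseudo-differential operators $a_{\overline v}(s,D_s)$, where the symbol is assembled from $\sigma(\overline v a_s,\lambda)$, a smooth cutoff, and the Plancherel factor $|c^{n,n}_\tau(\lambda)|^{-2}$. Since $\sigma$ has no regularity in $x$, the symbol $a_{\overline v}$ has no regularity in $s$ either, but its $\xi$-derivatives satisfy Hörmander--Mikhlin bounds of order $m<0$ with $\varrho=1$, uniformly in $\overline v$. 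Consequently $a_{\overline v}\in \mathcal S^{m'}_{1,\infty}$ for some $m'<0$, and Theorem \ref{thm_carlos} of Kenig--Staubach delivers the desired uniform bound, which transfers back to the required estimate on $\Psi_\sigma^{\text{loc}}$.

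For the discrete piece, one expands $\Psi_\sigma^{\text{dis}} f(x)$ as a finite sum over $k\in\Gamma_n$ of terms of the form $\sigma(x,ik)\,|k|\,\what{f}_B(ik)\,\psi_{ik}^{n,n}(x)$, bounds $|\what{f}_B(ik)|$ by Hölder, and controls $\|\psi_{ik}^{n,n}\|_{L^{p,\infty}}$ by the weak-type analogue of Mili\v{c}i\'c's characterization (Lemma \ref{lem_psi_k_Lp}); only $\|\sigma\|_{L^\infty(G\times i\Gamma_n)}$ is used, which is finite by definition of $\mathcal{S}^m_{1,\infty}(S_p)$. For the global piece, one repeats the kernel analysis of Section \ref{sec_glo_par}: using the global expansion of $\phi_{\tau,\lambda}^{n,n}$ together with holomorphicity in $S_p^\circ$, shift the $\lambda$-contour to $\partial S_p$ to obtain a pointwise kernel bound with sufficient exponential decay in $[g]^+$, and then conclude via the Abel-transform property and a duality argument. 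The decay $m<0$ and the holomorphic extension built into $\mathcal{S}^m_{1,\infty}(S_p)$ are exactly the hypotheses that survive the contour shift, so no new difficulty arises from the absence of regularity in $x$.

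The step I expect to be the main obstacle is the endpoint case $p=1$ with $n\in \Z^{\tau_+}\setminus\{0\}$. Here Lemma \ref{c^n,n-}(iv) shows that $c^{n,n}_\tau(-\lambda)^{-1}$ has a simple pole at $\lambda=i$, which sits on the boundary $\partial S_1$ and simultaneously obstructs the contour shift in the global part and ruins the uniform $\mathcal S^{m'}_{1,\infty}$ estimate on the transferred Euclidean symbol $a_{\overline v}$ in the local part. The vanishing condition $\partial^\alpha_\lambda\sigma(x,\lambda)\big|_{\lambda=i}=0$ for $\alpha\in\{0,1,2\}$ imposed in the definition of $\mathcal{S}^m_{1,\infty}(S_1)$ is designed precisely for this: a Taylor expansion of $\sigma(x,\cdot)$ at $\lambda=i$ to order two absorbs the pole of $c^{n,n}_\tau(-\lambda)^{-1}$ (and of its first two derivatives), producing a symbol that is regular in a neighborhood of $i$. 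Verifying that this cancellation is compatible with the transference reduction, that the resulting Euclidean symbol still lies in the admissible Kenig--Staubach class, and that one recovers only weak-type $(1,1)$ rather than strong $(1,1)$ boundedness (owing to the residual singularity on the boundary) is the delicate technical step on which the $p=1$ half of the theorem rests.
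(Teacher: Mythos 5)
Your overall plan coincides with the paper's: decompose $\Psi_\sigma$ into $\Psi_\sigma^{\text{dis}} + \Psi_\sigma^{\text{loc}} + \Psi_\sigma^{\text{glo}}$, observe that only the local piece uses smoothness of $\sigma$ in the space variable, carry over the discrete and global arguments from Sections~\ref{sec_dis_part} and~\ref{sec_glo_par} using only the $L^\infty_x$-bounds that are built into $\mathcal{S}^m_{1,\infty}(S_p)$, and for the local piece run the Coifman--Weiss transference of Section~\ref{Analysis on the local part} to produce a family of Euclidean symbols $a_x(s,\xi)\in\mathcal{S}^{m}_{1,\infty}$ to which Theorem~\ref{thm_carlos} applies. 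That is exactly the paper's proof.

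However, your diagnosis of the $p=1$ endpoint, which you single out as the ``main obstacle,'' contains two genuine errors of attribution. First, you claim the simple pole of $c^{n,n}_\tau(-\lambda)^{-1}$ at $\lambda=i$ ``ruins the uniform $\mathcal{S}^{m'}_{1,\infty}$ estimate on the transferred Euclidean symbol $a_{\overline v}$ in the local part.'' It does not: the local kernel $\mathcal{K}_0$ is built from the Plancherel density $|c^{n,n}_\tau(\lambda)|^{-2}$ on the \emph{real} line, whose estimates (Lemma~\ref{lemma:est_of_|c(-lambda)|^-2}) are unaffected by the pole of $c^{n,n}_\tau(-\lambda)^{-1}$ at $\lambda=i$; the pole is exclusively a global-part issue, where the contour is shifted to $\partial S_1$. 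Second, you attribute the failure of strong $L^1$ and the resulting weak type $(1,1)$ for $n\in\Z^{\tau_+}\setminus\{0\}$ to a ``residual singularity on the boundary'' surviving the contour shift. In fact, the vanishing condition $\partial_\lambda^\alpha\sigma(x,\lambda)|_{\lambda=i}=0$, $\alpha\le 2$, fully absorbs the pole and yields the kernel estimate \eqref{est_of_K_glo} at $p=1$, so $\Psi_\sigma^{\text{glo}}$ \emph{is} strong-type $(1,1)$ in that case (Theorem~\ref{thm:Lp_boundedness_of_T_sigma_global}). The weak type comes entirely from the \emph{discrete} part: for $n\in\Z^{\tau_+}\setminus\{0\}$ one has $1\in\Gamma_n$, and Lemma~\ref{lem_psi_k_Lp} shows $\psi_{i1}^{n,n}\in L^{1,\infty}(G)\setminus L^1(G)$, which forces the weak-type conclusion for $\Psi_\sigma^{\text{dis}}$ and hence for $\Psi_\sigma$. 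Your decomposition strategy is sound, but correcting this mislocation is essential to actually closing the endpoint case.
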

 \begin{remark}   We note that similar to Theorem \ref{thm_carlos}, the number of derivatives needed in the dual variable is not infinite. In fact, by following the proof of Theorem \ref{thm_nonreg}, one can demonstrate that the required number of derivatives is finite, although it does depend on the value of $m$ as in \eqref{nonreg_ineq}.
    \end{remark}
 
\section{Generalized Coifman-Weiss transference principles}\label{sec_transference_principle_weak}
 Let $ \{ X, \mathfrak{m} \} $ be a measure space. An operator $ \mathcal{B} $ on $ L^p(X,\mathfrak{m}) $ is said to be of weak-type ($ p,p $) if it maps $ \phi\in L^p(X,\mathfrak{m}) $ into a measurable function defined on a measure space $ (Y, \nu) $ in such a way that for each $ s > 0 $,
 \begin{align*}
 	 \nu\{ y \in Y : |(\mathcal{B} \, \phi)(y)| >s \}\leq [ \mathcal{C}\| \phi \|_{L^p(X)}/s]^p,
 \end{align*}
 where $ \mathcal{C} $ is independent of $ \phi\in L^p(X) $.
 
  Let  $ G $ be a locally compact group satisfying the following property: Given a compact subset $ B $ of $ G $ and $ \epsilon>0 $, there exists an open neighborhood $ V $ of the identity $ e $ having finite measure such that
 \begin{align}
 	 \frac{\mu (B^{-1}V) }{\mu (V)} \leq 1+\epsilon,
 \end{align}
where $ \mu $ is, say, left Haar measure on the group $G$. Let us suppose, further, that  $ \mathcal{R} $  is a representation consisting of measure preserving transformations of the space $X$.  Since $\mathcal{R}_u$ is measure-preserving 
 \begin{align}
 	\int_{X} |  f (\mathcal{R}_u x)|^p d\mathfrak{m}(x) = \int_{X} |f(x)|^p d\mathfrak{m}(x)  
 \end{align}
 for all $u \in G$.
The transformation we will consider is of the form 
\begin{align}\label{eqn_kernel_type}
	(Tf)(x)= \int_G k(x, \mathcal{R}_{u} x, u) f(\mathcal{R}_u x) d\mu(u),
\end{align}
where $ k(x,y,u) $ is a measurable function on $X\times X \times G  $ which is $0 $ if $u  $ does not belong to a compact set $ B\subset G $. Moreover, we assume that for each $x\in X$, the kernel 
 \begin{equation*}
 	{k}_x (v,u) := {k}(\mathcal{R}_v x, \mathcal{R}_{u^{-1}}\mathcal{R}_v x, u)={k}(\mathcal{R}_v x, \mathcal{R}_{u^{-1}v}x, u)
 \end{equation*}
 satisfies 
 \begin{equation}\label{eqn:req_est_for_pi}
 	\left( \int_G \left|  \int_G {k}_x(v,u) h (u^{-1}v ) d\mu(u) \right|^p d\mu(v)  \right)^{\frac{1}{p}} \leq \mathcal{C} \left(\int_G  \left| h(u)\right|^p d\mu(u)\right)^{\frac{1}{p}},	
 \end{equation}
 for all  $ h\in L^p(G) $, where $ \mathcal{C} $ is independent of $ x \in X $.  Then the authors in \cite[p. 292, (2.7) ]{coifman_73} proved that the operator $ T $  is   bounded from $ L^p({X}) $ to itself with norm not exceeding $ \mathcal{C} $  that is:
 \begin{equation}\label{eqn:Lp_est_for_Pi}
 	\left(\int_{ {X}}  \left| {T} f(x)\right|^p dx\right)^{\frac{1}{p}}	  \leq \mathcal{C} \left(\int_{{X} } \left| f(x)\right|^p dx\right)^{\frac{1}{p}}, \quad 
 \end{equation}
 \text{for all $f \in L^p(X).$}  We present the following weak-type version of the transference principle mentioned above. This version also extends the result of \cite[Theorem 2.6]{coifman_77} and is of independent interest.
\begin{theorem}\label{thm_transference_weak}
	Let $ p\in [1,\infty) $. Assume that $ k(x,y,u) $ is a measurable function on $X\times X \times G  $ as in \eqref{eqn_kernel_type}, and also satisfies the following for all $ s>0, $
	\begin{align}\label{est_hypothesis_on_k}
		 \mu \left\{ v \in G  :  \left| \int_G {k}_x (v,u) h( u^{-1}v) d\mu(u) \right|  >s \right\} \leq \frac{\mathcal{C}^p}{s^p} \|h\|_{L^p(G)}^p,
	\end{align} 
where $ \mathcal{C} $ is not depending on $``x" $ and $ h\in L^p(G) $. Then the operator $ T $ defined in \eqref{eqn_kernel_type} is of weak type $ (p,p) $. Moreover, the following is true,
\begin{align}
	 \mathfrak{m} \left\{  x \in X : |(Tf)(x)| > s\right\} \leq \frac{\mathcal{C}^p} {s^p}\|f\|^p_{L^p(X)},
\end{align}
for all $ f \in L^p(X) $.
\end{theorem}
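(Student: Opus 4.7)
The plan is to adapt the strong-type Coifman--Weiss transference (as used in \cite[(2.7)]{coifman_73}) to the weak-type setting by integrating the distribution function of $Tf$ against a Folner-type neighborhood and exchanging the order of integration so as to transfer the problem from $X$ to $G$, where the hypothesis \eqref{est_hypothesis_on_k} applies directly. Fix $f \in L^p(X)$, $s>0$ and $\epsilon>0$, and use the Folner hypothesis on $G$ to choose an open neighborhood $V$ of the identity with $0<\mu(V)<\infty$ and $\mu(B^{-1}V)/\mu(V)\leq 1+\epsilon$. Since each $\mathcal{R}_v$ preserves $\mathfrak{m}$, the set $\{x : |(Tf)(x)|>s\}$ has the same $\mathfrak{m}$-measure as $\{x : |(Tf)(\mathcal{R}_v x)|>s\}$ for every fixed $v \in V$; integrating this identity over $v\in V$ and swapping integrals by Fubini yields
\begin{equation*}
\mu(V)\,\mathfrak{m}\{x \in X : |(Tf)(x)|>s\} \;=\; \int_X \mu\{v \in V : |(Tf)(\mathcal{R}_v x)|>s\}\, d\mathfrak{m}(x).
\end{equation*}

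The next step is to recognize the inner distribution function as one to which \eqref{est_hypothesis_on_k} applies. A direct change of variables in the definition of $Tf$, combined with the fact that $k(\cdot,\cdot,u)$ vanishes outside the compact set $u\in B$, shows that for $v\in V$,
\begin{equation*}
(Tf)(\mathcal{R}_v x) \;=\; \int_G k_x(v,u)\, F_x(u^{-1}v)\, d\mu(u), \qquad F_x(w):= f(\mathcal{R}_w x)\,\mathbf{1}_{B^{-1}V}(w),
\end{equation*}
the cutoff being automatic since $u\in B$ and $v\in V$ force $u^{-1}v\in B^{-1}V$. Since the distribution function on $V$ is dominated by the one on all of $G$, hypothesis \eqref{est_hypothesis_on_k} applied to $h=F_x\in L^p(G)$ gives
\begin{equation*}
\mu\{v \in V : |(Tf)(\mathcal{R}_v x)|>s\} \;\leq\; \frac{\mathcal{C}^p}{s^p}\, \|F_x\|_{L^p(G)}^p.
\end{equation*}

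Inserting this bound into the previous display, applying Fubini once more, and using the measure-preserving property of $\mathcal{R}_w$,
\begin{equation*}
\mu(V)\,\mathfrak{m}\{x : |(Tf)(x)|>s\} \;\leq\; \frac{\mathcal{C}^p}{s^p} \int_{B^{-1}V}\!\!\int_X |f(\mathcal{R}_w x)|^p\, d\mathfrak{m}(x)\,d\mu(w) \;=\; \frac{\mathcal{C}^p}{s^p}\,\mu(B^{-1}V)\,\|f\|_{L^p(X)}^p.
\end{equation*}
Dividing by $\mu(V)$ and invoking the Folner choice of $V$ yields $\mathfrak{m}\{x : |(Tf)(x)|>s\}\leq (1+\epsilon)\,\mathcal{C}^p s^{-p}\|f\|_{L^p(X)}^p$, and letting $\epsilon\to 0^+$ completes the proof. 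The only delicate point is the identification of $(Tf)(\mathcal{R}_v x)$ with the convolution-type expression $\int_G k_x(v,u)F_x(u^{-1}v)\,d\mu(u)$, which is a bookkeeping-heavy but routine translation of variables forced by the definition of $k_x$; this identification pins down $B^{-1}V$ as the relevant enlargement and thereby matches the Folner hypothesis stated for the ratio $\mu(B^{-1}V)/\mu(V)$. Apart from this step, the argument is essentially the same as the strong-type transference, with the $L^p$-bound on the auxiliary operator replaced by the weak-type estimate \eqref{est_hypothesis_on_k}.
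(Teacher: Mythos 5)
Your proof is correct and follows essentially the same line of reasoning as the paper's: integrate the (constant-in-$v$) distribution function of $Tf$ over the Følner neighborhood $V$, swap the order of integration, insert the automatic cutoff to $B^{-1}V$ so the hypothesis \eqref{est_hypothesis_on_k} applies to $h = F_x$, then undo the Fubini and use the measure-preserving property and the ratio $\mu(B^{-1}V)/\mu(V)\le 1+\epsilon$. The only cosmetic difference is that you package the cutoff into the function $F_x$ while the paper carries a separate factor $\psi = \mathbf{1}_{B^{-1}V}$; the underlying steps are identical.
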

\begin{proof}
	 Let  us define \begin{align*}
	 	   \xi (s) & = \left\{ x \in X : \left|\int_G k(x, \mathcal{R}_{u^{-1}} x, u) f(\mathcal{R}_{u^{-1}} x) du\right| >s  \right\}, \\
	 	    \xi_v (s) & = \left\{ x \in X : \left|\int_G k(\mathcal{R}_v x, \mathcal{R}_{u^{-1} v} x, u) f(\mathcal{R}_{u^{-1} v} x) du\right| >s  \right\}, \\
	 	    \mathcal{F}(s) & = \left\{(v,x) \in G \times X : \left|\int_G k(\mathcal{R}_v x, \mathcal{R}_{u^{-1} v} x, u) f(\mathcal{R}_{u^{-1} v} x) du\right| >s  \right\}.
	 \end{align*}  
Observe that $\xi (s)=\xi_e(s) $ and moreover,  since $ \mathcal{R}_v $ is measure preserving,  we have 
 \begin{align}\label{eqn_measure_preserve_xi}
 	\mathfrak{m} (\xi(s)) = \mathfrak{m}(\mathcal{R}_v \xi(s))= \mathfrak{m} (\xi_v(s)).
 \end{align}
 Let $ \chi(v,x)  $ be the characteristic function on $ \mathcal{F}(s) $ and $ \psi $  be the characteristic function of $ B^{-1}V $ (thus $\psi(u^{-1}v)=1$ when $u \in B$ and $v\in V$). We note that if we fix $v$, then $ \chi(v,x)  $ is the characteristic function on $ \xi_v (s) $.  Now, integrating both sides of the equation above and using the Fubini theorem, we get
 \begin{align*}
 	 \mathfrak{m} (\xi(s))& =  \frac{1}{\mu (V)} \int_V  \mathfrak{m} (\xi_v(s)) \,d\mu(v)\\
 	 & =  \frac{1}{\mu (V)} \int_V \int_{X} \chi(v,x) \, d\mathfrak{m}(x)\,d\mu(v) \\
 	 & =  \frac{1}{\mu (V)}  \int_{X} \mu  \left\{  v \in G\cap V : \left|\int_G k(\mathcal{R}_v x, \mathcal{R}_{u^{-1} v} x, u) f(\mathcal{R}_{u^{-1} v} x) d\mu(u)\right| >s  \right\}   d\mathfrak{m}(x)\\
 	  & =  \frac{1}{\mu (V)}  \int_{X} \mu  \left\{  v \in G\cap V : \left|\int_G k(\mathcal{R}_v x, \mathcal{R}_{u^{-1} v} x, u) f(\mathcal{R}_{u^{-1} v} x) \psi(u^{-1} v) d\mu(u)\right| >s  \right\} \\ &\hspace{10cm} \cdot  d\mathfrak{m}(x)\\
& \leq   \frac{1}{\mu (V)}  \int_{X} \mu  \left\{  v \in G : \left|\int_G k(\mathcal{R}_v x, \mathcal{R}_{u^{-1} v} x, u) f(\mathcal{R}_{u^{-1} v} x) \psi(u^{-1} v) d\mu(u)\right| >s  \right\}   d\mathfrak{m}(x)\\
& \leq  \frac{1}{\mu (V)} \left( \int_{X}  \frac{\mathcal{C}^p}{s^p}  \int_G |f(\mathcal{R}_{w} x)|^p \psi (w) d\mu (w) \right) d\mathfrak{m}(x) \quad \text{(using \eqref{est_hypothesis_on_k})}\\
& \leq \frac{1}{\mu (V)}  \frac{\mathcal{C}^p}{s^p}   \int_G  \| f\|_{L^p(X)}^p \psi (w) d\mu (w)   \quad  (\text{since $\mathcal{R}_{w}$ is measure-preserving)} \\
& \leq \frac{\mu (B^{-1} V)}{\mu (V)} \frac{\mathcal{C}^p}{s^p}   \| f\|_{L^p(X)}^p\\
&\leq (1+\epsilon)  \frac{\mathcal{C}^p}{s^p}   \| f\|_{L^p(X)}^p,
 \end{align*}
  where we used \eqref{est_hypothesis_on_k} by \text{taking $h(u^{-1}v) = f(\mathcal{R}_{u^{-1} v} x) \psi(u^{-1} v) $}.
Since $\epsilon>0$ is arbitrary, this concludes the proof of Theorem \ref{thm_transference_weak}.
\end{proof}
	\section{Properties of spherical functions} \label{sec_phi_m,n} 
 In this section, we will derive some functional identities and asymptotic estimates of the spherical functions. 
  \subsection{Functional identities for the spherical functions}
    We start with the following formula.
  \begin{proposition} \label{phi_m,n_id}
    We have   
\begin{align*}\label{eqn:identity-phi}
         \phi_{\tau,\lambda}^{n,n}(y^{-1}x) 
         &= \int_K e^{-(i\lambda+1)  H(y ^{-1} k)} e^{(i\lambda-1)H(x^{-1}k)}  e_n(K({x^{-1}k})^{-1}) \overline{e_n(K(y ^{-1} k)^{-1})}   \,dk,
         \end{align*}
for all $x, y\in G$ and $\lambda\in\C$.
  \end{proposition}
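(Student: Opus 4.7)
My plan is to start with the integral formula \eqref{definition of phi sigma lamda m, n} applied to the point $y^{-1}x$, and then to rewrite the factor $e^{-(i\lambda+1)H(y^{-1}xk)}\overline{e_n(K(y^{-1}xk)^{-1})}$ using the Iwasawa cocycle before changing variables on $K$. Concretely, since $xk=K(xk)\,a_{H(xk)}\,n(xk)$ and $A$ normalises $N$, one has
\[
H(y^{-1}xk)=H(y^{-1}K(xk))+H(xk),\qquad K(y^{-1}xk)=K(y^{-1}K(xk)),
\]
so plugging these into the integral makes the ``$y$-part'' of the integrand depend on $k$ only through $K(xk)$.

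The next step is the change of variable $k'=K(xk)$, i.e.\ $k=K(x^{-1}k')$ (which holds because $K(x^{-1}K(xk))=k$ by the same cocycle). By Lemma \ref{k(xk)_dif} the measure $dk$ becomes $e^{-2H(x^{-1}k')}\,dk'$. The residual factor $e^{-(i\lambda+1)H(xk)}$ is then handled via the companion identity $H(xK(x^{-1}k'))=-H(x^{-1}k')$, obtained by writing $xK(x^{-1}k')=k'\cdot a_{-H(x^{-1}k')}\cdot n''$ with $n''=a_{-H(x^{-1}k')}n(x^{-1}k')^{-1}a_{H(x^{-1}k')}\in N$. This turns $e^{-(i\lambda+1)H(xk)}$ into $e^{(i\lambda+1)H(x^{-1}k')}$, and combined with the Jacobian $e^{-2H(x^{-1}k')}$ one gets exactly $e^{(i\lambda-1)H(x^{-1}k')}$. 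The factor $e_n(k^{-1})$ becomes $e_n(K(x^{-1}k')^{-1})$, and assembling all the pieces gives the claimed formula.

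The main obstacle, I expect, is the sign-and-Jacobian bookkeeping: two cocycle identities and one change of variable on $K$ have to combine in exactly the right way to produce the symmetric-looking answer in which $x^{-1}k$ and $y^{-1}k$ appear on equal footing. A slicker but less self-contained alternative is to appeal to unitarity: for $\lambda\in\R$, $\pi_{\tau,\lambda}$ is unitary on $H_\tau$, hence $\pi_{\tau,\lambda}(y^{-1})=\pi_{\tau,\lambda}(y)^{*}$, giving
\[
\phi_{\tau,\lambda}^{n,n}(y^{-1}x)=\langle\pi_{\tau,\lambda}(x)e_n,\pi_{\tau,\lambda}(y)e_n\rangle=\int_K(\pi_{\tau,\lambda}(x)e_n)(k)\,\overline{(\pi_{\tau,\lambda}(y)e_n)(k)}\,dk,
\]
into which one inserts the explicit action (equivalently, $(\pi_{\tau,\lambda}(g)e_n)(k)=e^{(i\lambda-1)H(g^{-1}k)}e_n(K(g^{-1}k)^{-1})$, obtained from \eqref{definition of principal series representation} after the inversion-invariant substitution $k\mapsto k^{-1}$ in the $L^2(K)$-pairing). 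Using that $H(y^{-1}k)\in\R$, the conjugation $\overline{e^{(i\lambda-1)H(y^{-1}k)}}=e^{-(i\lambda+1)H(y^{-1}k)}$ produces the required exponent on the $y$-factor for $\lambda\in\R$. Finally, one extends to $\lambda\in\C$ by the identity theorem: the left-hand side is entire in $\lambda$ by property (2) recorded just before the proposition, and the right-hand side is entire by inspection (the integrand is entire in $\lambda$ for fixed $k$ and locally uniformly bounded on compacta of $\C$, so dominated convergence permits differentiation under the integral). This route bypasses the $K$-change-of-variable computation at the cost of invoking holomorphy.
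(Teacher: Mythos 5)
Your primary plan is exactly the paper's proof of Proposition \ref{phi_m,n_id}: apply the Iwasawa cocycle identities $H(y^{-1}xk)=H(y^{-1}K(xk))+H(xk)$ and $K(y^{-1}xk)=K(y^{-1}K(xk))$ together with $H(xk)=-H(x^{-1}K(xk))$, $k=K(x^{-1}K(xk))$, then change variables via Lemma \ref{k(xk)_dif} so that the Jacobian $e^{-2H(x^{-1}k)}$ combines with $e^{(i\lambda+1)H(x^{-1}k)}$ to give $e^{(i\lambda-1)H(x^{-1}k)}$. Your unitarity-plus-holomorphic-continuation alternative is plausible, but the claimed reduction $(\pi_{\tau,\lambda}(g)e_n)(k)=e^{(i\lambda-1)H(g^{-1}k)}e_n(K(g^{-1}k)^{-1})$ does not follow from \eqref{definition of principal series representation} by the substitution $k\mapsto k^{-1}$ alone (that would turn $H(gk)$ into $H(gk^{-1})$, not $H(g^{-1}k)$), so reconciling the two formulas would require either an additional cocycle step or accepting that \eqref{definition of principal series representation} contains a misprint, whereas the cocycle computation uses only \eqref{definition of phi sigma lamda m, n} and avoids this issue entirely.
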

  \begin{proof}
      We recall from the Iwasawa decomposition and use it for $xk= K(xk) \exp(H(xk)) n_1$. We can write
     \begin{equation}\label{H(yxk)}
      \begin{aligned}
          y^{-1}x k&= y ^{-1} K(xk) exp(H(xk)) n_1\\
           & = K(y ^{-1} K(xk)) \exp(H(y ^{-1} K(xk)))n_2 \exp(H(xk)) n_1.
      \end{aligned}
     \end{equation}
       Since $A$ normalizes $N$, we get from \eqref{H(yxk)},
    \begin{equation*}
  y^{-1}x k = K(y ^{-1} K(xk)) \exp\left(H(y ^{-1} K(xk))+H(xk)\right) n_3,
    \end{equation*} 
    which in turn implies
    \begin{equation}\label{H(yxk)=}
        H( y^{-1}x k) = H(y ^{-1} K(xk))+ H(xk),
    \end{equation}
    and 
    \begin{equation}\label{K(yxk)=}
        K( y^{-1}x k) =   K(y ^{-1} K(xk)) .
    \end{equation}
    Plugging \eqref{H(yxk)=} and \eqref{K(yxk)=} in \eqref{definition of phi sigma lamda m, n}, we get
    \begin{align*}
        \phi_{\tau,\lambda}^{n,n}(y^{-1}x)&= \int_K e^{-(i\lambda+1)H(y^{-1}xk)} e_n(k^{-1}) \,\overline{e_n(K(y^{-1}xk)^{-1})}\, dk\\
        &= \int_Ke^{-(i\lambda+1) \left( H(y ^{-1} K(xk)+H(xk)\right)}  e_n(k^{-1})\, \overline{e_n(K(y ^{-1} K(xk))^{-1})} \,dk.
    \end{align*}
    But again by \eqref{H(yxk)=} and \eqref{K(yxk)=} (putting $y=x$), we get
    \begin{align}
        H(xk) = - H(x^{-1}K(xk)) \quad \text{and} \quad k= K(x^{-1}K(xk)),
    \end{align}
    whence we obtain
    \begin{align*}
         \phi_{\tau,\lambda}^{n,n}(y^{-1}x)&\\=& \int_K e^{-(i\lambda+1) \left( H(y ^{-1} K(xk)-H(x^{-1}K(xk))\right)}  e_n(K({x^{-1}K(xk))}^{-1}) \,\overline{e_n(K(y ^{-1} K(xk))^{-1})} \,dk.
    \end{align*}
    Now we apply Lemma \ref{k(xk)_dif} to get
      \begin{align*}
         \phi_{\tau,\lambda}^{n,n}(y^{-1}x)&= \int_K e^{-(i\lambda+1) \left( H(y ^{-1} k) -H(x^{-1}k)\right)}  e_n(K({x^{-1}k})^{-1}) \,\overline{e_n(K(y ^{-1} k)^{-1})}  e^{-2H(x^{-1}k)}\,dk\\
         &= \int_K e^{-(i\lambda+1)  H(y ^{-1} k)} e^{(i\lambda-1)H(x^{-1}k)}  e_n(K({x^{-1}k})^{-1})\, \overline{e_n(K(y ^{-1} k)^{-1})}   \,dk,
         \end{align*}
         which concludes the proof of our lemma.
  \end{proof}
 \begin{lemma}\label{lem_fnal_id}
      The spherical function $ \phi_{\tau,\lambda}^{n,n}$ satisfies the following identity
      \begin{equation}\label{phimn_id3}
          \int_K  \phi_{\tau,\lambda}^{n,n}(ykx) e_n(k^{-1}) \, dk = \phi_{\tau,{\lambda}}^{n,n}(y^{-1}) \phi_{\tau,\lambda}^{n,n}(x),
      \end{equation}
  for all $\lambda \in \R$ and $x,y \in G$.
  \end{lemma}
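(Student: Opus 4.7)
The identity is the $(n,n)$-type analogue of Gangolli's product formula $\int_K \phi_\lambda(gkh)\,dk = \phi_\lambda(g)\phi_\lambda(h)$ for the $K$-biinvariant case, and my plan is to reduce it to a product of two single $K$-integrals by combining the Iwasawa-coordinate identity of Proposition \ref{phi_m,n_id} with a decoupling change of variable on $K$.

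First, I would apply Proposition \ref{phi_m,n_id} (after the harmless relabeling $y \mapsto y^{-1}$) with $x$ replaced by $kx$, so as to write $\phi_{\tau,\lambda}^{n,n}(ykx)$ as an integral over a dummy variable $k_{2}\in K$ whose integrand involves $H(yk_{2})$, $H((kx)^{-1}k_{2})$, $K((kx)^{-1}k_{2})^{-1}$ and $K(yk_{2})^{-1}$. The substitution $k_{3}=k^{-1}k_{2}$ (a measure-preserving diffeomorphism of $K$) absorbs the factor $k$ into the $y$-dependent entries $H(ykk_{3})$, $K(ykk_{3})$, while the $x$-dependent entries become functions of $k_{3}$ alone. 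Plugging this into the left-hand side of the lemma, swapping the two $K$-integrations, and finally changing variable $u=kk_{3}$ in the outer integral, the character splits as $e_{n}(k^{-1})=e_{n}(k_{3})\,e_{n}(u^{-1})$; the double integral then factors as a product of two single $K$-integrals, one depending only on $(y,u)$ and the other only on $(x,k_{3})$.

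The $y$-factor matches the defining integral \eqref{definition of phi sigma lamda m, n} directly and is immediately identified with a diagonal matrix coefficient of $\pi_{\tau,\lambda}$. The $x$-factor, after the manipulations above, has the form
\begin{equation*}
\int_{K} e^{(i\lambda-1)H(x^{-1}k_{3})}\,e_{n}\bigl(K(x^{-1}k_{3})^{-1}\bigr)\,e_{n}(k_{3})\,dk_{3},
\end{equation*}
which is \emph{not} literally \eqref{definition of phi sigma lamda m, n}; identifying it with $\phi_{\tau,\lambda}^{n,n}(x)$ is the core technical step. My plan is to apply the substitution $k_{3}\mapsto k_{3}^{-1}$ and then invoke the symmetries $\phi_{\tau,\lambda}^{n,n}=\phi_{\tau,-\lambda}^{n,n}$ and $\phi_{\tau,\lambda}^{-n,-n}(x^{-1})=\phi_{\tau,\lambda}^{n,n}(x)$ recorded in items (ii)--(iii) following \eqref{ef_phi_n,n}, together with the fact that $\overline{e_{n}(k)}=e_{n}(k^{-1})$, to recast the integrand in the standard form. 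The main obstacle I foresee is precisely this bookkeeping: correctly threading the Iwasawa data through the change of variable and matching signs, inverses and complex conjugates against the defining formula before the symmetries can be applied. Gathering the two factors produces the claimed product $\phi_{\tau,\lambda}^{n,n}(y^{-1})\,\phi_{\tau,\lambda}^{n,n}(x)$ on the right-hand side.

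As a conceptually cleaner alternative route, available because $\lambda\in\R$, I would observe directly from \eqref{definition of principal series representation} that $\pi_{\tau,\lambda}(k)e_{n}=e_{n}(k)\,e_{n}$ for every $k\in K$, so that the line $\mathbb{C}e_{n}$ is a one-dimensional $K$-invariant subspace on which $\pi_{\tau,\lambda}|_{K}$ acts by the character $e_{n}$. By Schur orthogonality on $K$, this means that $\int_{K} e_{n}(k^{-1})\,\pi_{\tau,\lambda}(k)\,dk$ is the rank-one orthogonal projection $P_{n}$ onto $\mathbb{C}e_{n}$, and the lemma then reduces to the one-line computation $\langle\pi_{\tau,\lambda}(y)P_{n}\pi_{\tau,\lambda}(x)e_{n},e_{n}\rangle=\phi_{\tau,\lambda}^{n,n}(x)\,\langle\pi_{\tau,\lambda}(y)e_{n},e_{n}\rangle$, the $y^{-1}$-form on the right-hand side being recovered using the unitarity relation $\overline{\phi_{\tau,\lambda}^{n,n}(y)}=\phi_{\tau,\lambda}^{n,n}(y^{-1})$ in conjunction with the symmetries (i)--(iii).
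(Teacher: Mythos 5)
Your first route is essentially the paper's own proof: apply Proposition \ref{phi_m,n_id} to rewrite $\phi_{\tau,\lambda}^{n,n}(ykx)$ as a single $K$-integral, Fubini, decouple the $x$- and $y$-dependence via a translation change of variable on $K$, identify the $y$-factor with the defining integral \eqref{definition of phi sigma lamda m, n}, and identify the $x$-factor with $\phi_{\tau,-\lambda}^{-n,-n}(x^{-1})=\phi_{\tau,\lambda}^{n,n}(x)$ via \eqref{phi_n,n-lam}. The only cosmetic difference is whether one absorbs $k$ into the $y$-slot of the proposition before integrating or substitutes inside the inner $k_1$-integral and then changes the outer variable; both land on the same double integral.

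Your alternative route, via $\pi_{\tau,\lambda}(k)e_n=e_n(k)\,e_n$ and the identification of $\int_K e_n(k^{-1})\pi_{\tau,\lambda}(k)\,dk$ with the rank-one projection $P_n$ onto $\C e_n$, is a genuinely different and cleaner argument that the paper does not take: it bypasses the Iwasawa-coordinate bookkeeping entirely and reduces the identity to $\langle\pi(y)P_n\pi(x)e_n,e_n\rangle=\phi_{\tau,\lambda}^{n,n}(x)\langle\pi(y)e_n,e_n\rangle$. Note also that this $K$-type argument does not actually need $\lambda\in\R$, since the $K$-decomposition of $H_\tau=L^2(K)$ and the pairing are independent of $\lambda$; this matches the fact that the paper's Iwasawa proof never uses the reality of $\lambda$ either.

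One caution about your closing step. Both your calculations and the paper's own proof actually produce $\phi_{\tau,\lambda}^{n,n}(y)\,\phi_{\tau,\lambda}^{n,n}(x)$, with $y$ and not $y^{-1}$ in the first factor (the paper's final display literally reads $\phi_{\tau,\lambda}^{n,n}(y)\phi_{\tau,\lambda}^{n,n}(x)$). You should not try to upgrade this to $\phi^{n,n}(y^{-1})$ via unitarity: the unitarity relation is $\phi^{n,n}_{\tau,\lambda}(y^{-1})=\overline{\phi^{n,n}_{\tau,\lambda}(y)}$, a complex conjugate, and the symmetries \eqref{phi_n,n-lam} give $\phi^{n,n}_{\tau,\lambda}(y^{-1})=\phi^{-n,-n}_{\tau,\lambda}(y)$; neither makes $\phi^{n,n}(y)$ equal to $\phi^{n,n}(y^{-1})$ when $n\neq 0$. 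A direct check at $y=k_\theta\in K$, $x=e$ shows the left-hand side of \eqref{phimn_id3} equals $e^{in\theta}=\phi^{n,n}_{\tau,\lambda}(y)$, not $e^{-in\theta}=\phi^{n,n}_{\tau,\lambda}(y^{-1})$. The version with $\phi^{n,n}(y)$ is the one consistent with the paper's proof and with its later use (where the formula is applied with $y$ replaced by $y^{-1}$); the $y^{-1}$ in \eqref{phimn_id3} appears to be a slip, so simply stop at $\phi^{n,n}_{\tau,\lambda}(y)\phi^{n,n}_{\tau,\lambda}(x)$ rather than inventing a conversion.
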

  \begin{proof}
      We can write from Proposition \ref{phi_m,n_id},
      \begin{align*}
           \phi_{\tau,\lambda}^{n,n}(ykx)= \int_K e^{-(i\lambda+1)  H(yk k_1)} e^{(i\lambda-1)H(x^{-1}k_1)}  e_n(K({x^{-1}k_1})^{-1}) \overline{e_n(K(yk k_1)^{-1})}   \,dk_1.
      \end{align*}
      Using the formula above, the left-hand side of \eqref{phimn_id3} transforms to
      \begin{align*}
            \int_K   \left(  \int_K e^{-(i\lambda+1)  H(yk k_1)} e^{(i\lambda-1)H(x^{-1}k_1)}  e_n(K({x^{-1}k_1})^{-1}) \overline{e_n(K(yk k_1)^{-1})}   \,dk_1\right)  e_n(k^{-1})\, dk.
      \end{align*}
      We now apply Fubini's theorem, followed by the change of variable $ k k_1 \rightarrow k$ in the expression above, to get 
        \begin{align*}
             &\int_K   \left(  \int_K e^{-(i\lambda+1)  H(yk)}  e_n(k^{-1}) \overline{e_n(K(yk )^{-1})}  \,dk\right) e^{(i\lambda-1)H(x^{-1}k_1)}  e_n\left(K({x^{-1}k_1})^{-1}\right) e_n(k_1)\, dk_1\\
             & = \phi_{\tau,\lambda}^{n,n}(y) \int_K    e^{(i\lambda-1)H(x^{-1}k_1)}  e_n\left(K({x^{-1}k_1})^{-1}\right) e_n(k_1)\, dk_1\\
               & = \phi_{\tau,\lambda}^{n,n}(y)   \phi_{\tau,-{\lambda}}^{-n,-n}(x^{-1})\\
               &=  \phi_{\tau,\lambda}^{n,n}(y)  \phi_{\tau,\lambda}^{n,n}(x)
        \end{align*}
        where in the last step, we used \eqref{phi_n,n-lam}. This completes the proof of the lemma
  \end{proof}

\subsection{Asymptotic estimates of spherical functions} In this section, we will discuss the asymptotic estimate of the spherical functions. We begin by presenting the local and global expansions of $\phi^{n,n}_{\tau,\lambda}$ in terms of well-known special functions. It is important to note that the spherical functions exhibit different behaviors near and away from the identity, and this distinction will be evident in the expansions.
 
 Let $J_\mu(z)$ be the Bessel functions of the first kind, and let
           \begin{equation}\label{defn_J_mu} 
		 \mathcal J_\mu(z)=\frac{J_\mu(|z|)}{|z|^\mu} \Gamma(\mu + \frac{1}{2}) \Gamma(\frac{1}{2}) 2^{\mu-1}.
		 \end{equation}
		 Then we have the following asymptotic expansion of the spherical function $ \phi_{\tau,\lambda}^{n,n} $ near identity.
			\begin{lemma}\label{thm:localexpansion-phi}
				For  $ 0\leq t\leq 1  $, the spherical function  $ \phi_{\tau,\lambda}^{n,n} $ can be decomposed as 
				\begin{equation*}
					 \phi_{\tau,\lambda}^{n,n} (a_t) = \left(\frac{t}{\Delta(t)}\right)^{1/2}\sum_{j=0}^2 t^{2j} b_j^n (t) \mathcal{J}_j(\lambda t) + E_n(\lambda ,t ), \quad \lambda \geq 0, 
				\end{equation*}
			where $ b_0^n(t) \equiv b_0  $ is a constant independent of $ n  $, while $ |b_j^n(t)| \leq C_n  $, $ j=1,2 ,$ and 
			\begin{equation*}
				 \int_{1}^{\infty }|E_n(\lambda,t )| \lambda d\lambda \leq C_n, \quad \text{ uniformly in } 0\leq t\leq 1.
			\end{equation*}
			\end{lemma}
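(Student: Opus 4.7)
The strategy follows the template of Stanton and Tomas \cite{Stanton_and_Tomas} for the $K$-biinvariant case, adapted to the $(n,n)$-type setting. The starting point is the second-order ODE satisfied by $t \mapsto \phi_{\tau,\lambda}^{n,n}(a_t)$ on $A^+$, obtained by restricting the Casimir eigenvalue equation \eqref{ef_phi_n,n}. I would first compute the radial part $L_n$ of $\Omega$ acting on $(n,n)$-type functions along $A^+$; this is a Jacobi-type second-order differential operator with a regular singular point at $t=0$ and with coefficients involving $\coth(2t)$ and $n^2/\sinh^2(2t)$ (up to normalization). The Liouville substitution
$$u_n(\lambda, t) := \left(\frac{\Delta(t)}{t}\right)^{1/2} \phi_{\tau,\lambda}^{n,n}(a_t)$$
reduces the ODE to the Bessel-perturbed form
$$u_n''(\lambda, t) + \lambda^2 \, u_n(\lambda, t) = Q_n(t)\, u_n(\lambda, t),$$
where $Q_n$ is a smooth bounded potential on $[0,1]$, coming from the Schwarzian of $\Delta(t)/t$ together with the $n^2$-term from $L_n$. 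The eigenvalue shift by $1$ in \eqref{ef_phi_n,n} absorbs neatly into $Q_n$.

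Next, I would rewrite the equation as a Volterra integral equation comparing $u_n$ to the model solution of the unperturbed equation, $\mathcal{J}_0(\lambda t)$ (essentially $\cos(\lambda t)$, up to normalization), using the Green's function $G(t,s) = \lambda^{-1}\sin(\lambda(t-s))$. Because $Q_n$ is smooth at the origin, each Volterra iteration extracts a factor of $t^2$ and raises the order of the Bessel factor by one, by means of the standard identity $\int_0^t s\,\mathcal{J}_{j-1}(\lambda s)\,ds = \text{(polynomial in }t^2\text{)}\cdot\mathcal{J}_j(\lambda t)$. After two iterations we obtain
$$u_n(\lambda, t) = \sum_{j=0}^{2} t^{2j} b_j^n(t)\, \mathcal{J}_j(\lambda t) + \widetilde{E}_n(\lambda,t),$$
where $b_0^n(t) \equiv b_0$ is a $\lambda$-independent normalization constant coming from the value $\phi_{\tau,\lambda}^{n,n}(e) = 1$ and hence independent of $n$, while the higher coefficients $b_1^n, b_2^n$ are smooth functions on $[0,1]$ built from iterated integrals of $Q_n$ and are uniformly bounded by a constant $C_n$. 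Multiplying through by $(t/\Delta(t))^{1/2}$ gives the desired expansion with $E_n(\lambda,t) = (t/\Delta(t))^{1/2} \widetilde{E}_n(\lambda,t)$.

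The main obstacle will be the control of the remainder $E_n(\lambda, t)$. To prove the estimate $\int_1^\infty |E_n(\lambda,t)|\, \lambda\, d\lambda \leq C_n$ uniformly in $t \in [0,1]$, I would combine the standard Bessel decay $|\mathcal{J}_j(\lambda t)| \lesssim (1 + \lambda t)^{-j-1/2}$ with the boundedness of $Q_n$ on $[0,1]$ inside the Volterra iteration. After two iterations, the remainder inherits a decay of order $(1+\lambda t)^{-5/2}$ in the oscillatory variable, which, together with the factor $\lambda$ from the measure and a change of variables $\lambda t \mapsto s$, yields the required uniform integrability in $t \in [0,1]$. The delicate point is to ensure that the constants in the Volterra estimate depend only polynomially on $n$ through the explicit $n^2$-coefficient in $Q_n$, so that the bound $C_n$ depends on $n$ but not on $\lambda$ or $t$.
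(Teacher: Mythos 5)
Your route is genuinely different from the paper's. The paper restricts the Casimir eigenvalue equation to $A^+$, substitutes $f(t)=(\cosh t)^n g(t)$ and $z=-\sinh^2 t$ to land on a hypergeometric ODE, identifies $\phi_{\tau,\lambda}^{n,n}(a_t)$ with a Jacobi function $\varphi_\lambda^{(0,n)}$, invokes Koornwinder's Mehler--Dirichlet integral representation for that Jacobi function, and then appeals to the computation in Ricci--Wr\'obel \cite[(4.12)]{Ricci} to extract the Bessel expansion. You instead take the Stanton--Tomas route: a Liouville transformation to a perturbed equation $u''+\lambda^2 u = Q_n u$, a Volterra integral equation with the sine kernel, and two iterations to peel off $\mathcal J_0,\mathcal J_1,\mathcal J_2$ with a controllable remainder. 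Both are standard and workable; the paper's route has the advantage that the uniformity and integrability of $E_n$ can be imported from the explicit integral representation (as in Ricci--Wr\'obel), while your route is more self-contained but requires carrying the iteration and remainder estimates out explicitly.

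One concrete slip: you write that the radial Casimir has coefficients ``involving $\coth(2t)$ and $n^2/\sinh^2(2t)$'' and hence has a regular singular point where the $n^2$-term contributes. In fact the paper records (from \cite[Theorem 13.1]{Barker}) that
\begin{equation*}
\Pi_{n,n}(\Omega) f(a_t) = \tfrac14 f''(a_t) + \tfrac12 \coth(2t)\, f'(a_t) + \tfrac14\,\frac{n^2}{\cosh^2 t}\,f(a_t),
\end{equation*}
so the $n^2$-term is \emph{regular} at $t=0$. This matters: if the $n^2$-term were singular like $n^2/\sinh^2(2t)$, the Liouville transform would leave a residual $c/t^2$ potential and the unperturbed model would be $\mathcal J_\nu$ for some $\nu=\nu(n)>0$, not $\mathcal J_0$; your subsequent claim that $Q_n$ is smooth and bounded on $[0,1]$ would then be inconsistent with the stated operator. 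As written your proposal contains this internal inconsistency, though your downstream conclusions (smooth bounded $Q_n$, model solution $\mathcal J_0$, $b_0^n\equiv b_0$ independent of $n$) happen to match what the correct formula produces. With the coefficient corrected the plan is sound, and the $n$-dependence of $C_n$ indeed enters only polynomially through the $n^2/\cosh^2 t$ term in $Q_n$, as you anticipate.
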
			
			\begin{proof}
			    We recall that $\phi_{\tau,\lambda}^{n,n}$ is a solution of the following differential  equation 
\begin{equation*}
	 \Omega f =-\frac{ (\lambda^2 +1)}{4}f,
\end{equation*}
where $ f $ is a $ (n,n) $ type function on $ G. $  Since $ \Omega $ preserves
the types of a function and an $ (n, n) $ type function is uniquely
determined by its restriction to $ (0, \infty) $, we obtain an operator
$ \Pi_{n,n}(\Omega) $ on $ (0, \infty) $ which satisfies
\begin{equation*}
\Omega f(a_t)= \Pi_{n,n}(\Omega)(f\mid_{A^+})(a_t).
\end{equation*}
The operator $\Pi_{n,n}(\Omega)$, known as the $ (n,n) $-th radial component of the Casimir, is a second-order differential operator, and its expression is given by  \cite[Theorem 13.1]{Barker}:
\begin{equation*} 
\Pi_{n,n}(\Omega) f(a_t) = \frac{1}{4}\frac{d^2 }{dt^2} f(a_t) + \frac{1}{2} \coth 2t \frac{d}{dt}   f(a_t) + \frac{1}{4}  \frac{n^2}{\cosh^2 t} f(a_t), \quad t>0.
\end{equation*}
We substitute $f(t)=  (\cosh t)^n g(t)$ in
\begin{equation} \label{Equation of eigen function of Pi {n,n}}
\Pi_{n,n}(\Omega) f = -\frac {(\lambda^2+1) } {4}f 
\end{equation}
to obtain the following,
\begin{equation}\label{ODE of the function g}
\frac{d^2g }{dt^2} + ((2n+1)\tanh t + \coth t ) \frac{dg}{dt}   + ( {\lambda}^2+ {(n+1)}^2)g = 0.
\end{equation}
After performing the change of variable $z:= -\sinh^2 t $, the ODE \eqref{ODE of the function g} simplifies to the hypergeometric differential equation 
\begin{equation*}
z(1-z) \frac{d^2g}{dz^2}  + (c-(a+b+1)z) \frac{dg}{dz} - \frac{1}{4} ab g=0 \label{Hypergeometric equation reduced},
\end{equation*}
with parameters $a=\frac{n+1}{2}- \frac{i\lambda}{2}, b=\frac{n+1}{2}+ \frac{i\lambda}{2}, c=1 $. Hence, using the expression from \cite[(2.2)]{MR0774055}, we find that the Jacobi function $\varphi_{\lambda}^{(0,n)}$ is the unique solution satisfying regularity conditions and equaling $1$ at $z=0$. By employing the integral representation of the Jacobi function from \cite[(2.21)]{MR0374832}, we deduce from \eqref{Equation of eigen function of Pi {n,n}} the following: 
\begin{align*}
    \phi_{\tau,\lambda}^{n,n} (a_t) &=\frac{2^{\frac{3}{2}}}{\pi} \int_{0}^t \cos (\lambda s) \left( \cosh 2t-\cosh 2s \right) ^{-\frac{1}{2}} {}_2 F_1 \left(n, -n;1/2;\frac{\cosh t -\cosh s}{2 \cosh t} \right) \,ds.
\end{align*}   

Subsequently, by performing similar calculations as presented in \cite[(4.12)]{Ricci}, we can arrive at our lemma.
\end{proof}
The following lemma is a counterpart of Ionescu's result \cite[Propositin A.2 (c)]{Ionescu_2000} in our context. It provides an estimate for the spherical function away from the identity, which will be utilized in the large-scale analysis of $\Psi_\sigma$.		
			\begin{lemma} \label{propn:global_expansion_of_phi_lambda}
				Suppose that $ t\geq 1/10 $ and $ N\in\N $. Then $ \phi_{\tau,\lambda}^{n,n}(a_t) $ can be written in the following form
				\begin{equation}\label{eqn:phi-lambda-assymp-expansion}
					\phi_{\tau,\lambda}^{n,n}(a_t)= e^{- t} \left( e^{i\lambda t}c^{n,n}_\tau(\lambda) (1+a(\lambda,t)) +e^{-i\lambda t} c^{n,n}_\tau(-\lambda) (1+a(-\lambda,t)) \right),
				\end{equation}
				where the function $ a(\lambda,t) $ satisfies the following inequalities,
				\begin{equation}\label{eqn:est_for_a(lambda,t)}
					\left|  \frac{\partial^\alpha}{\partial \lambda^\alpha}   a(\lambda,t) \right| \leq C_n (1+| \lambda | )^{-\alpha},
				\end{equation}
				for all integers $ \alpha\in [0,N] $, and for all $ \lambda  $ in the region $ 0\leq \Im \lambda \leq  1+1/10. $
			\end{lemma}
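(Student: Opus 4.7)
The plan is to derive the asymptotic expansion by the classical Harish-Chandra method applied to the radial ODE for $\phi_{\tau,\lambda}^{n,n}$. Recall from the proof of Lemma \ref{thm:localexpansion-phi} that $\phi_{\tau,\lambda}^{n,n}(a_t)$ satisfies the second-order equation
\begin{equation*}
\Pi_{n,n}(\Omega) f(a_t) = \frac{1}{4}f''(a_t) + \frac{1}{2}\coth(2t) f'(a_t) + \frac{n^2}{4\cosh^2 t} f(a_t) = -\frac{\lambda^2+1}{4} f(a_t).
\end{equation*}
For $t>0$ I would look for two linearly independent solutions of Frobenius-type at infinity, of the form
\begin{equation*}
\Phi_{\pm\lambda}(t) = e^{(\pm i\lambda - 1)t} \sum_{k=0}^{\infty} \Gamma_k(\pm\lambda)\, e^{-2kt}, \qquad \Gamma_0(\lambda)\equiv 1,
\end{equation*}
and derive a two-term recurrence for the $\Gamma_k(\lambda)$ by substituting into the ODE (after expanding $\coth(2t)$ and $\cosh^{-2}t$ as geometric series in $e^{-2t}$, both of which are analytic for $t\geq 1/10$). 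The recurrence has the schematic shape $2k(2k - 2i\lambda)\Gamma_k(\lambda) = \sum_{j<k} P_{k,j}(\lambda) \Gamma_j(\lambda)$ with $P_{k,j}$ of degree at most $1$ in $\lambda$.

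Next, I would identify $\phi_{\tau,\lambda}^{n,n}(a_t)$ with the appropriate linear combination of $\Phi_{\lambda}$ and $\Phi_{-\lambda}$. For generic $\lambda$ the Wronskian of $\Phi_{\lambda},\Phi_{-\lambda}$ is nonzero, and by matching the large-$t$ asymptotics with the known $c$-function asymptotics from \eqref{defn_c_n,n} (and the behavior of the Jacobi function $\varphi_\lambda^{(0,n)}$ recorded earlier in the excerpt), one obtains
\begin{equation*}
\phi_{\tau,\lambda}^{n,n}(a_t) = c^{n,n}_\tau(\lambda)\,\Phi_{\lambda}(t) + c^{n,n}_\tau(-\lambda)\,\Phi_{-\lambda}(t).
\end{equation*}
Setting $a(\lambda,t):= \sum_{k=1}^{\infty} \Gamma_k(\lambda) e^{-2kt}$ then yields the claimed form \eqref{eqn:phi-lambda-assymp-expansion} for all $t\geq 1/10$ and $\lambda$ away from the discrete exceptional set where $\Phi_\lambda = \Phi_{-\lambda}$ (handled by an extension/continuity argument).

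The main obstacle, and where care is required, is establishing the derivative estimates \eqref{eqn:est_for_a(lambda,t)} uniformly in the strip $0\leq \Im\lambda \leq 1+1/10$. From the recurrence, $\Gamma_k(\lambda)$ is a rational function of $\lambda$ whose poles lie at points $\lambda \in i\{1,2,\ldots\}$ coming from the prefactor $k(k-i\lambda)$. Since $\Im\lambda \leq 1+1/10 < 2$, only the factor $k(k-i\lambda)$ with $k=1$ could vanish, and only when $\Im\lambda = 1$; however, it turns out (inductively) that the numerator contains a compensating factor there, so the $\Gamma_k$ are in fact holomorphic on the strip (this is exactly the analogue of the computation in the $K$-biinvariant case used in \cite{Ionescu_2000}). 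To control derivatives I would prove by induction on $k$ that
\begin{equation*}
\left|\frac{\partial^\alpha}{\partial\lambda^\alpha}\Gamma_k(\lambda)\right| \leq \frac{C_{n,\alpha}\, M^k}{k^{2+\alpha}(1+|\lambda|)^{\alpha}}
\end{equation*}
for some $M<e^{1/5}$, using the recurrence and the Leibniz rule; the factor $(1+|\lambda|)^{-\alpha}$ is gained from each differentiation of the rational function (whose degree in $\lambda$ stays bounded because leading-order terms telescope). Summing against $e^{-2kt}$ with $t\geq 1/10$ produces the convergent bound $C_n(1+|\lambda|)^{-\alpha}$ for $\partial^\alpha_\lambda a(\lambda,t)$, completing the proof. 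This inductive bookkeeping of the $\Gamma_k$ is the only genuinely technical step; everything else is a direct transcription of the Harish-Chandra expansion machinery to the $(n,n)$-type setting.
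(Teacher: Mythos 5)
Your approach mirrors the paper's almost exactly: both rely on the Harish-Chandra series expansion with a coefficient recurrence, then prove the derivative estimates by induction on the coefficient index and sum against $e^{-2kt}$ using $t\geq 1/10$. The only stylistic difference is that the paper simply invokes Barker's formula (13.1) together with the explicit recursion for the $a_k^{n,n}$, rather than re-deriving the expansion from the radial ODE via Frobenius, and it establishes a polynomial bound $|\partial_\lambda^\beta a_k^{n,n}(\lambda)|\leq C_n k^{A_\beta}(1+|\Re\lambda|)^{-\beta}$ rather than your geometric $M^k/k^{2+\alpha}$; both growth rates suffice for the final summation.

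One detail in your outline is based on a sign slip. With the recursion factor $k(k-i\lambda)$, the zero occurs at $\lambda=-ik$, i.e.\ $\Im\lambda=-k\leq -1$, which lies strictly below the strip $0\leq\Im\lambda\leq 1+1/10$. There is therefore no pole at $\Im\lambda=1$ for $k=1$, and no compensating factor in the numerator is needed. The clean way to see this, and the precise inequality the paper exploits, is
\begin{equation*}
|k-i\lambda|^2=(k+\Im\lambda)^2+(\Re\lambda)^2\geq\max\{k,|\Re\lambda|\}^2,
\end{equation*}
which holds exactly because $\Im\lambda\geq 0$ in the region considered. Your conclusion (holomorphy of the coefficients on the strip) is correct, but the explanation via a compensating zero is spurious; replacing that sentence with the displayed inequality makes the inductive step go through directly and matches the paper's argument.
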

		\begin{proof}
			From \cite[(13.1)]{Barker}, we have for all $ t>0 $,
			\begin{equation*}
				\phi_{\tau, \lambda}^{n,n} (a_t) = e^{-t} \left[ e^{-i\lambda t} c_{\tau}^{n,n} (\lambda) (1+a(\lambda,t)) + e^{i\lambda t} c_{\tau}^{n,n} (-\lambda) (1+a(-\lambda,t))   \right],
 			\end{equation*}
 		where \begin{eqnarray*}
a(\lambda,t) =\sum_{k=1}^{\infty} a^{n,n}_{k}(\lambda)e^{-2kt},
 		\end{eqnarray*}
 	and the functions $ a^{n,n}_{k} $ satisfy the following recursion relation  for $ \lambda\in \C\setm i\Z, $
 	\begin{equation}\label{eqn:recusrsive_reln_of_a_k}
 		 a^{n,n}_{k}(\lambda) = -\frac{1}{k(k-i \lambda)} \left( \sum_{j=1,odd}^k a^{n,n}_{k-j} (\lambda) j  n^2 - \sum_{j=2,even}^k a^{n,n}_{k-j}  (\lambda )  (1-i\lambda+2k-2j+{j}n^2)  \right)
 	\end{equation}
 with $ a_{0}^{n,n} \equiv1 $.
 We define \begin{align*}
 	  \alpha_{j}^k (\lambda)=\begin{cases}
 	  	-\dfrac{jn^2}{k(k-i\lambda)}, \quad  \text{ when $ j $ is odd  and $ 1\leq j\leq k $} \\
 	  	\dfrac{1}{k} \left( 1+ \dfrac{ 1+ k-2j +jn^2}{k-i\lambda}\right), \quad \text{ when $ j $ is even}.
 	  \end{cases}
 \end{align*}
			  Our  aim is to show that for all $ \beta\in [0,N] $, there exists constants $ C_n $ and $ A_\beta $ such that \begin{equation}\label{eqn:est of a_k}
			 	\left| \frac{\partial^\beta}{\partial\lambda^\beta}   a^{n,n}_{k}(\lambda) \right| \leq C_n k^{A_\beta} (1+|\Re \lambda| )^{-\beta}
			 \end{equation}
		 for all $ k\geq 1 $  and  all $ \lambda\in \C $, with $ 0\leq \Im \lambda\leq 1+1/10.  $ Using the following inequality above
		 \begin{equation*}
            |k-i\lambda|^2  = |k+\Im \lambda -i \Re \lambda|^2 =(k+\Im \lambda)^2+ \Re \lambda ^2 \geq \max\{ k, |\Re \lambda|\}^2,
		 \end{equation*}
	  we can directly say that 
			 \begin{eqnarray}\label{eqn:est_of_alpha_j^k}
			 	\left| \frac{\partial^\beta}{\partial\lambda^\beta}   \alpha_j^k(\lambda) \right| \leq \frac{C_n}{k|k-i\lambda|^\beta} \leq \frac{C_n}{k(1+|\Re \lambda|)^\beta}
			 \end{eqnarray}
		 for all integers $ k\geq 1 , j\leq k-1 $, $ \beta\in [0,N] $ and  for all $ \lambda\in \C $, with $ 0\leq \Im \lambda\leq 1+1/10.  $
		 
		 \noindent We now prove \eqref{eqn:est of a_k} for $ \beta=0 $ by induction over $ k\geq1 $. For $ k=1 $,
		 \begin{align*}
		 	 \left| a^{n,n}_{1} (\lambda) \right|= |\alpha^1_1(\lambda)| \leq \frac{n^2}{|1-i\lambda|} \leq C_n \text{ (depending on $ n $)}.
		 \end{align*}
			 Next, we assume \eqref{eqn:est of a_k} holds for all $ 1\leq j\leq k-1 $. Then using \eqref{eqn:recusrsive_reln_of_a_k}, \eqref{eqn:est_of_alpha_j^k}  and \eqref{eqn:est of a_k} we get,
			 \begin{align*}
			 	 \left|  a^{n,n}_k(\lambda) \right|  & \leq  \left( \sum_{j=1,odd}^k |a^{n,n}_{k-j} (\lambda)| |\alpha_{j}^k|+ \sum_{j=2,even}^k |a^{n,n}_{k-j}  (\lambda )|   |\alpha_{j}^k| \right)\\
			 	 &  \leq \frac{C_n}{k} \left( \sum_{j=1,odd}^k |a^{n,n}_{k-j} (\lambda)| + \sum_{j=2,even}^k |a^{n,n}_{k-j}  (\lambda )|  \right)\\
			 	 & \leq \frac{C_n}{k} \sum_{j=1}^{k-1}    \frac{ C_n j^{A_0}}{A_0}\\
			 	 &  \leq  \frac{C_n}{k}   \frac{ C_n k^{A_0+1}}{A_0} =  C_n k^{A_0} \frac{C_n}{A_0},
			 \end{align*}
		 where we used the fact for all $ k\geq 2 $, $ A\geq 4 $,
		 \begin{equation*}
		 	 1+ \sum_{j=1}^{k-1}  j^{A}\leq \frac{k^{A+1}}{A}.
		 \end{equation*}
	 Now if we choose $ A_0 =C_n$, then by induction, we have proved \eqref{eqn:est of a_k} for $ \beta=0 $ and for all $ k\geq 1 $.
	 
	 Next, to prove \eqref{eqn:est of a_k} for arbitrary integer $ \beta \leq N $, we assume by induction that we found suitable powers $ A_\beta $, such that \eqref{eqn:est of a_k} holds for all $0\leq  \alpha\leq \beta-1 $ and for all $ k\geq 1 $. We can also assume $ A_0\leq A_1\leq \cdots \leq A_{\beta-1} $. Again we apply induction over $ k $ for fixed $ \beta $.  Its obvious that $ \left|\frac{\partial^\beta }{\partial \lambda^\beta}a_1^{n,n}(\lambda) \right| \leq C_n (1+|\Re \lambda|) ^{-\beta} $. We assume \eqref{eqn:est of a_k} holds for $ \beta $ and for all $ j\in \{ 1,2.\cdots k-1\} $.  Then again \eqref{eqn:recusrsive_reln_of_a_k}, \eqref{eqn:est_of_alpha_j^k}  and \eqref{eqn:est of a_k} imply
	 
	 \begin{align*}
	 	\left|\frac{\partial^\beta }{\partial \lambda^\beta}a_k^{n,n}(\lambda) \right|&  \leq  2^{\beta}    \left( \sum_{j=1}^{k-1}  \sum_{\alpha=0}^\beta \left| \frac{\partial^{\beta-\alpha} }{\partial \lambda^{\beta-\alpha}}a^{n,n}_{k-j} (\lambda)\right| \left| \frac{\partial^\alpha }{\partial \lambda^\alpha}\alpha_{j}^k\right|\right)\\
	 	&\leq  2^{\beta}    \sum_{\alpha=0}^\beta \left( \sum_{j=1}^{k-1}  \frac{C_n}{k(1+| \Re \lambda| )^{\beta-\alpha} }\left| \frac{\partial^\alpha }{\partial \lambda^\alpha}\alpha_{j}^k\right|\right)\\
	 	& \leq  2^{\beta}    \sum_{\alpha=0}^\beta \frac{C_n}{k(1+| \Re \lambda| )^{\beta-\alpha} } \left( \sum_{j=1}^{k-1}\frac{C_n\max\{ j,1\}^{A_\alpha}}{(1+|\Re \lambda|)^\alpha }\right)\\
	 	&\leq   2^{\beta}    \sum_{\alpha=0}^\beta \frac{C_n}{(1+| \Re \lambda| )^{\beta} } \frac{C_n k^{A_\beta}}{A_\beta}\\
	 	& \leq C_n k^{A_\beta} (1+|\Re \lambda|) ^{-\beta} \frac{C_n2^\beta (\beta+1)}{A_\beta}.
	 \end{align*}
 By taking  $ A_\beta \geq \max \{ A_{\beta-1}, C_n 2^\beta (\beta+1)\},$ the proof follows by induction. In fact one can set $ A_\beta =C_n 2^\beta(\beta+1) $ for all integers $ \beta\in [0,N] $.
		\end{proof}

We now focus on the estimate of the canonical discrete series matrix coefficients $\psi_{ik}^{n,n}$. We recall the following uniform growth properties of $\psi_{ik}^{n,n}$  from \cite[Theorem 8.1]{Barker}.
\begin{theorem}\label{thm_est_of_psi_k}
    Fix $l \in \mathbb{N}$. There exist constants $C, r_1, r_2, r_3 \geq 0$ such that
    \begin{equation}\label{eqn_est_of_psi_k}
        \left| \psi_{ik}^{n,n}(x)  \right| \leq C (1+ |n|)^{r_1}(1+|k|)^{r_2}(1+x^+)^{r_3}\phi_{\tau^+,0}^{0,0}(x)^{(1+l)}
    \end{equation}
    for all $k \in \Z^*$ for which $|k|\geq l$, and for all $n\in \Z(k)$, for all $t \geq 0$.
\end{theorem}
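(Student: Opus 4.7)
The plan is to combine the identification \eqref{relation between psi n,n and phi {n,n}} with an explicit Jacobi-function representation of $\phi_{\tau,\lambda}^{n,n}$, and then compare the resulting estimate with the elementary spherical function $\phi_{\tau^+,0}^{0,0}$ via the Harish-Chandra bound.

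First, starting from the Jacobi-function representation of $\phi_{\tau,\lambda}^{n,n}(a_t)$ derived in the proof of Lemma \ref{thm:localexpansion-phi}, one may specialize to $\lambda = i|k|$. This is precisely the value at which the principal series $\pi_{\tau,i|k|}$ reduces and contains the discrete series $\pi_{ik}$ as a subrepresentation; consequently the regular solution of the radial differential equation \eqref{Equation of eigen function of Pi {n,n}} degenerates, and $\psi_{ik}^{n,n}(a_t) = \phi_{\tau,i|k|}^{n,n}(a_t)$ admits a closed form
\begin{equation*}
\psi_{ik}^{n,n}(a_t) = Q_{n,k}(\tanh t)\,(\cosh t)^{-(|k|+1)},
\end{equation*}
where $Q_{n,k}$ is essentially a Jacobi polynomial whose degree and coefficients depend on $n$ and $k$. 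Using $|\tanh t|\leq 1$ and $\cosh t \geq \tfrac{1}{2}e^t$, this yields
\begin{equation*}
|\psi_{ik}^{n,n}(a_t)| \leq C(1+|n|)^{\alpha_1}(1+|k|)^{\alpha_2}\, e^{-(|k|+1)t}
\end{equation*}
for suitable $\alpha_1, \alpha_2 \geq 0$.

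Second, I would invoke the classical Harish-Chandra estimate $\phi_{\tau^+,0}^{0,0}(a_t) \asymp (1+t)\,e^{-t}$ valid for $t \geq 0$, which gives $\phi_{\tau^+,0}^{0,0}(a_t)^{1+l} \asymp (1+t)^{1+l}\, e^{-(1+l)t}$. Since the hypothesis $|k| \geq l$ forces $|k|+1 \geq 1+l$, one has
\begin{equation*}
e^{-(|k|+1)t}\leq e^{-(1+l)t}\leq C\,(1+t)^{-(1+l)}\phi_{\tau^+,0}^{0,0}(a_t)^{1+l},
\end{equation*}
and combining with the previous bound produces
\begin{equation*}
|\psi_{ik}^{n,n}(a_t)| \leq C(1+|n|)^{r_1}(1+|k|)^{r_2}(1+t)^{r_3}\phi_{\tau^+,0}^{0,0}(a_t)^{1+l},
\end{equation*}
with $r_3 = 1+l$. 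Since $\psi_{ik}^{n,n}$ is of $(n,n)$-type and both sides depend only on the Cartan component, this extends the estimate from $a_t$ to arbitrary $x\in G$ with $x^+ = t$.

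The main obstacle is quantifying the polynomial dependence in $n$ and $k$ of the coefficients of $Q_{n,k}$, which is essential for obtaining the explicit exponents $r_1$ and $r_2$. This combinatorial step can be handled either by appealing to standard bounds on Jacobi polynomial coefficients, or by running a recursion argument analogous to the inductive estimate carried out in the proof of Lemma \ref{propn:global_expansion_of_phi_lambda} for the coefficients $a_k^{n,n}$, in which each coefficient is controlled by tracking the structure of the recursion determined by \eqref{Equation of eigen function of Pi {n,n}} at $\lambda = i|k|$. Once the required polynomial growth is established, the remaining steps of the argument are routine comparisons with the Harish-Chandra bound.
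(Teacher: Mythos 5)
This statement is quoted, not proved, in the paper: the text immediately preceding it says ``We recall the following uniform growth properties of $\psi_{ik}^{n,n}$ from \cite[Theorem 8.1]{Barker},'' and the paper offers no argument. So your proposal is not being compared against a proof in the paper but against a citation, and the question is whether it stands on its own. It does not, for the following reason.

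Your algebraic reduction is correct: using $\psi_{ik}^{n,n}=\phi_{\tau,i|k|}^{n,n}$ and the Jacobi-function representation from the proof of Lemma \ref{thm:localexpansion-phi}, a Pfaff transformation of the hypergeometric factor indeed gives $\psi_{ik}^{n,n}(a_t)=(\cosh t)^{-(1+|k|)}\,P_m^{(0,|k|)}(1-2\tanh^2 t)$ with $m=\tfrac{n-1-|k|}{2}$ a nonnegative integer (the parity match between $n\in\mathbb{Z}^\tau$ and $k\in\mathbb{Z}^{-\tau}$ makes $m$ integral). The difficulty is that the step you flag as ``the main obstacle'' and then declare ``can be handled'' by bounding the coefficients of $Q_{n,k}$ is not a routine coefficient estimate, and your proposed methods would actually fail. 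The individual coefficients of $P_m^{(0,|k|)}$, and indeed its sup-norm on $[-1,1]$, are $\binom{m+|k|}{m}$ at $x=-1$, which grows exponentially when $m$ and $|k|$ grow proportionally (take $n\approx 3|k|$, so $m\approx |k|$ and $\binom{2|k|}{|k|}\sim 4^{|k|}$). No bound that is polynomial in $(1+|n|)$ and $(1+|k|)$ can be extracted from the polynomial factor alone; you must use the fact that the $x\to -1$ endpoint corresponds to $t\to\infty$, where the $(\cosh t)^{-(1+|k|)}$ prefactor kills the growth, and then you still have to control the intermediate range of $t$ uniformly in $n$ and $k$. That interplay is precisely the content of Barker's Theorem 8.1 (ultimately going back to the Trombi--Varadarajan estimates the paper mentions in its remark following the statement), and it is the part your sketch omits entirely. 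Likewise the recursion for $a_k^{n,n}$ in Lemma \ref{propn:global_expansion_of_phi_lambda} produces constants $C_n$ whose dependence on $n$ is unquantified, so it cannot be invoked to get the claimed polynomial growth in $n$.

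Two minor points. First, in your comparison step the inequality $e^{-(1+l)t}\leq C(1+t)^{-(1+l)}\phi_{\tau^+,0}^{0,0}(a_t)^{1+l}$ carries a \emph{negative} power of $(1+t)$, which is favorable; combining it with the earlier bound gives $r_3=0$, not $r_3=1+l$. Second, the polynomial factor is most naturally written as a function of $\tanh^2 t$ (a Jacobi polynomial in $1-2\tanh^2 t$), not of $\tanh t$; this does not affect the argument but should be stated correctly if you intend to invoke Jacobi-polynomial estimates.
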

We remark that the estimate above is a consequence of a more general result by Trombi and Varadarajan, where they found a necessary condition on a discrete series representation having the $K$-finite matrix coefficient with a certain rate of decay; see \cite[Theorem 8.1]{trombi_1972}. Later, in 1977, Mili\v{c}i\'c proved that their condition was sufficient, too, which, in turn 
provided a precise characterization of discrete series representations whose $K$-finite matrix coefficients lie in $L^p(G')$ for $1\leq p<2$, where $G'$ is a connected semisimple Lie group with finite center; see \cite[Theorem, p.60]{mililic_1977} for more details. In the setting of $\mathrm{SL(2,\R)}$ group, the result can be stated as follows:
\begin{corollary}[{{\cite[Corollary, p.84]{mililic_1977}}}] \label{cor_millic}
Let $0<p\leq 2$ and $\gamma_p = (2/p)-1$. If $(\pi_{ik}, H_k)$ is a discrete series representation corresponding to $k\in \Z^*$, and $\psi$ is a $K$-finite matrix coefficient of $\pi_{ik}$, then the following conditions are equivalent:
\begin{enumerate}
    \item $|k|> \gamma_p$,
    \item $\psi \in L^p(G)$.
\end{enumerate}
\end{corollary}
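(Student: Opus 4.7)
The plan is to reduce to the canonical diagonal matrix coefficients $\psi_{ik}^{n,n}$, then extract the sharp exponential decay rate of $\psi_{ik}^{n,n}(a_t)$ as $t\to\infty$, and finally convert this rate into an $L^p$--threshold via the Cartan integration formula. Every $K$-finite vector in $H_k$ is a finite linear combination of the basis elements $e_m^k$ ($m\in\Z(k)$), so every $K$-finite matrix coefficient of $\pi_{ik}$ is a finite sum of (possibly off-diagonal) coefficients $\langle \pi_{ik}(x)e_m^k, e_\ell^k\rangle_k$. Since finite sums preserve $L^p$-integrability, and since the decay rate along $A^+$ is the same for all admissible $m,\ell$ (only polynomial factors in $t$ differ, which do not affect integrability), it suffices to treat $\psi_{ik}^{n,n}$ for a single $n\in\Z(k)$.

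For the direction $(1)\Rightarrow(2)$, I would invoke Theorem \ref{thm_est_of_psi_k} with the choice $l=|k|$ (so that $|k|\geq l$). Combined with the classical asymptotic $\phi_{\tau_+,0}^{0,0}(a_t)\asymp(1+t)\,e^{-t}$ and the Cartan integration formula \eqref{definition of integration} (with $\Delta(t)\sim e^{2t}$ at infinity), one obtains
\begin{equation*}
\int_G |\psi_{ik}^{n,n}(x)|^p\,dx \;\lesssim\; \int_0^\infty (1+t)^{N}\,e^{-p(1+|k|)t+2t}\,dt,
\end{equation*}
which converges precisely when $p(1+|k|)>2$, i.e., when $|k|>\gamma_p$.

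For the converse $(2)\Rightarrow(1)$ I need a matching lower bound. Using the identification $\psi_{ik}^{n,n}=\phi_{\tau,i|k|}^{n,n}$ from \eqref{relation between psi n,n and phi {n,n}} together with the global expansion of Lemma \ref{propn:global_expansion_of_phi_lambda} (extended to $\lambda=i|k|$ by revisiting the recursion \eqref{eqn:recusrsive_reln_of_a_k}, whose denominator $k(k-i\lambda)=k(k+|k|)$ is strictly positive there, so the coefficients $a_j^{n,n}(i|k|)$ remain well-defined), one obtains
\begin{equation*}
\psi_{ik}^{n,n}(a_t)= c_\tau^{n,n}(i|k|)\,e^{-(1+|k|)t}\bigl(1+a(i|k|,t)\bigr)\;+\; c_\tau^{n,n}(-i|k|)\,e^{(|k|-1)t}\bigl(1+a(-i|k|,t)\bigr).
\end{equation*}
The potentially growing second term drops out: Lemma \ref{c^n,n-}(ii), combined with the fact that $n\in\Z(k)$ forces $k\in\Z^{-\tau}$ and $|n|>|k|$, shows that $\lambda=i|k|$ lies in the zero set of $c_\tau^{n,n}(-\cdot)$, i.e.\ $c_\tau^{n,n}(-i|k|)=0$. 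Inspecting the explicit formula \eqref{defn_c_n,n} (and cancelling apparent Gamma poles in numerator and denominator via the reflection identity) shows the leading constant $c_\tau^{n,n}(i|k|)$ is finite and nonzero. Therefore $|\psi_{ik}^{n,n}(a_t)|\gtrsim e^{-(1+|k|)t}$ for $t$ large, and the Cartan integral now forces $p(1+|k|)>2$, giving $|k|>\gamma_p$.

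The main obstacle is the careful bookkeeping of zeros and poles of $c_\tau^{n,n}(\pm\cdot)$ at $\lambda=i|k|$: one must simultaneously confirm the vanishing of $c_\tau^{n,n}(-i|k|)$ (killing the growing exponential and ensuring the candidate asymptotic is genuine decay) and the non-vanishing of $c_\tau^{n,n}(i|k|)$ (ensuring the lower bound does not degenerate), using the precise parity relations $k\in\Z^{-\tau}$, $n\in\Z(k)$, $0<|k|<|n|$. A secondary technical point is that Lemma \ref{propn:global_expansion_of_phi_lambda} is stated only for $0\leq\Im\lambda\leq 1+1/10$, so for $|k|\geq 2$ one must re-run the induction in its proof at the imaginary points $\lambda=i|k|$; this presents no difficulty because the recursion stays nonsingular and the series $a(i|k|,t)=\sum_j a_j^{n,n}(i|k|)e^{-2jt}$ still converges, with coefficients of at worst polynomial growth in $j$, so that $1+a(i|k|,t)$ is uniformly bounded and bounded away from zero for $t$ large.
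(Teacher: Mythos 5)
The forward direction $(1)\Rightarrow(2)$ of your argument is fine, and your overall strategy — reduce to $\psi_{ik}^{n,n}$, prove a two-sided asymptotic $|\psi_{ik}^{n,n}(a_t)|\simeq e^{-(1+|k|)t}$, and integrate against $\Delta(t)\sim e^{2t}$ — is exactly the route the paper takes for its weak-type analogue (Lemma \ref{lem_psi_k_Lp} via \eqref{psi_k_est}); the corollary itself is simply quoted from Mili\v{c}i\'c. The problem is in your lower bound. You evaluate the two-term Harish-Chandra expansion of Lemma \ref{propn:global_expansion_of_phi_lambda} at $\lambda=i|k|$ and discard the growing term on the grounds that $c_\tau^{n,n}(-i|k|)=0$. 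But that expansion is only valid for $\lambda\notin i\Z$: while the coefficients $a_j^{n,n}(i|k|)$ of the decaying term are indeed well defined (denominator $j(j+|k|)>0$), the coefficients of the growing term are $a_j^{n,n}(-i|k|)$, whose recursion \eqref{eqn:recusrsive_reln_of_a_k} has denominator $j(j-|k|)$ and hence is singular at $j=|k|$. Your claim that ``the recursion stays nonsingular'' only checks the first term. What you actually face at $\lambda=i|k|$ is a product $c_\tau^{n,n}(-\lambda)\bigl(1+a(-\lambda,t)\bigr)$ of the form $0\cdot\infty$: the simple zero of $c_\tau^{n,n}(-\cdot)$ at $i|k|$ (Lemma \ref{c^n,n-}(ii)) cancels the simple poles of the $a_j^{n,n}(-\cdot)$ for $j\geq|k|$, and in the limit $\lambda\to i|k|$ these terms do not vanish but fold into the decaying series (they contribute $e^{(|k|-1-2j)t}=e^{-(1+|k|)t}e^{-2(j-|k|)t}$). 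So ``the second term drops out'' is not justified as written.

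The repair is what the paper does for \eqref{psi_k_est}: bypass the principal-series expansion and use Barker's one-sided expansion for the discrete series coefficient directly, $\psi_{ik}^{n,n}(a_t)=2e^{-(1+|k|)t}\sum_{l\geq 0}d_l^{n,n}(k)e^{-2lt}$ with $d_0^{n,n}(k)=c_\tau^{n,n}(\pm k)\neq 0$ and $|d_l^{n,n}(k)|\leq C_k e^{rl}$ for every $r>0$, which yields $e^{(1+|k|)t}\psi_{ik}^{n,n}(a_t)\to 2d_0^{n,n}(k)$ and hence the lower bound. Your bookkeeping of the zeros and poles of $c_\tau^{n,n}(\pm i|k|)$ is correct and is precisely why $d_0^{n,n}(k)\neq 0$, so the gap is repairable, but the derivation of the asymptotic as you present it is not valid. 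A minor secondary point: in the reduction to diagonal coefficients for $(2)\Rightarrow(1)$ you should justify that a nonzero linear combination $\sum c_{m\ell}\psi_{ik}^{m,\ell}$ cannot decay faster than its terms; this follows because the angular factors $e_m\otimes e_\ell$ are orthogonal in $L^2(K\times K)$, so no cancellation can occur after integrating over $K\times K$ in the Cartan decomposition.
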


\begin{remark}\label{lem_est_of_psi_k}
It also follows from the estimate in \cite[(3.2)]{Barker} for $\phi_{\tau^+,0}^{0,0}$, which exhibits exponential decay, and \eqref{eqn_est_of_psi_k}, that the matrix coefficients of the discrete series representation belong to $L^q(G)$ for all $q>1$. More precisely, for $q>1, k \in \Z^*$, and $n \in \Z(k)$,  we have
    \begin{align}\label{lem_Lqest_of_psi_k}
        \|\psi_{ik}^{n,n}\|_{L^q(G)} \leq C_q  (1+ |n|)^{r_1}(1+|k|)^{r_2}.
    \end{align}
\end{remark}
The previous result illustrates that, given a $k\in \Z^*$ and $p \in (0,2]$, how we can examine whether a $K$-finite matrix coefficient of $\pi_{ik}$ belongs to $L^p(G)$ or not. However, it does not say anything about the case $ |k|=\gamma_p$. So, to accommodate the case $|k|= \gamma_p$, we provide the following weak-type version of the result \cite[Corollary, p. 84]{mililic_1977} for the $(n, n)$-th matrix coefficient of $\pi_{ik}$.
 \begin{lemma}\label{lem_psi_k_Lp}
    Let $0<p\leq 2$ and $\gamma_p = (2/p)-1$. If $(\pi_{ik}, H_k)$ is a discrete series representation corresponding to $k\in \Z^*$, and  $n\in \Z(k)$, then the following conditions are equivalent:
\begin{enumerate}
    \item $|k|\geq \gamma_p$,
    \item $\psi_{ik}^{n,n} \in L^{p,\infty}(G)$.
\end{enumerate}
Moreover, if $|k|= \gamma_p$, then $\psi_{ik}^{n,n} \in L^{p,q}(G)$, if and only if $q=\infty$.
\end{lemma}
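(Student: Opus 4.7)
The proof reduces to a sharp two-sided asymptotic for $|\psi_{ik}^{n,n}(a_t)|$ as $t\to\infty$, combined with a direct computation of the distribution function via the Cartan decomposition. The first step is to establish
\[
\psi_{ik}^{n,n}(a_t) = C(n,k)\,e^{-(|k|+1)t}\bigl(1+O(e^{-2t})\bigr), \qquad t\to\infty,
\]
with a nonzero constant $C(n,k)$. Using $\psi_{ik}^{n,n} = \phi_{\tau, i|k|}^{n,n}$ from \eqref{relation between psi n,n and phi {n,n}} and Barker's global expansion (Lemma~\ref{propn:global_expansion_of_phi_lambda} together with its natural extension to arbitrary $\lambda$ with singularities removed), one has
\[
\phi_{\tau,i|k|}^{n,n}(a_t) = e^{-t}\Bigl[e^{-|k|t}c^{n,n}_\tau(i|k|)(1+a(i|k|,t)) + e^{|k|t}c^{n,n}_\tau(-i|k|)(1+a(-i|k|,t))\Bigr].
\]
The growing term vanishes because $c^{n,n}_\tau(-i|k|) = 0$: since $n\in \Z^\tau$, $k\in \Z^{-\tau}$ have opposite parities and $|n|>|k|$ by the definition of $\Z(k)$, the number $|n|-|k|$ is a positive odd integer, so $(1+|k|-|n|)/2$ is a non-positive integer and the Gamma factor $\Gamma((1+i\lambda-|n|)/2)$ in the denominator of \eqref{defn_c_n,n} has a pole at $\lambda=-i|k|$. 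A direct local analysis of the ratio of Gamma factors shows that $C(n,k) := c^{n,n}_\tau(i|k|)$ is a finite nonzero constant, and since $a(i|k|,t)$ is a power series in $e^{-2t}$ without constant term, the displayed asymptotic follows.

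Once the asymptotic is secured, the distribution function is computed directly. Because $\psi_{ik}^{n,n}$ is of $(n,n)$-type, $|\psi_{ik}^{n,n}(k_1 a_t k_2)| = |\psi_{ik}^{n,n}(a_t)|$, so the Cartan integration formula \eqref{definition of integration} gives
\[
d_{\psi_{ik}^{n,n}}(\alpha) = \int_{\{t>0\,:\,|\psi_{ik}^{n,n}(a_t)|>\alpha\}} \Delta(t)\,dt, \qquad \Delta(t) = 2\sinh(2t).
\]
The function $|\psi_{ik}^{n,n}(a_t)|$ is bounded (as a unitary matrix coefficient), continuous, and eventually monotone with two-sided asymptotic $\asymp e^{-(|k|+1)t}$, so for small $\alpha>0$ the super-level set is an interval $(0,T_\alpha)$ up to an $O(1)$ error, with $T_\alpha = \tfrac{1}{|k|+1}\log(|C(n,k)|/\alpha)$. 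Integrating $\Delta(t) \asymp e^{2t}$ for large $t$ yields
\[
d_{\psi_{ik}^{n,n}}(\alpha) \asymp e^{2T_\alpha} \asymp \alpha^{-2/(|k|+1)}, \qquad \alpha\to 0^+,
\]
while $d_{\psi_{ik}^{n,n}}$ is bounded away from $\alpha=0$ (in fact vanishes for $\alpha>\|\psi_{ik}^{n,n}\|_\infty$).

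Translating via the Lorentz quasi-norms in \eqref{defn_lor} completes the proof. The condition $\psi_{ik}^{n,n}\in L^{p,\infty}(G)$, namely $\sup_\alpha \alpha^p d(\alpha)<\infty$, is equivalent to $p\geq 2/(|k|+1)$, i.e.\ $|k|\geq\gamma_p$, giving the equivalence of (1) and (2). For the moreover part, when $|k|=\gamma_p$ we have $d(\alpha)\asymp \alpha^{-p}$ near $0$, so for any $q<\infty$,
\[
\|\psi_{ik}^{n,n}\|_{L^{p,q}(G)}^q \asymp \int_0^c \alpha^{q-1}\alpha^{-q}\,d\alpha = \int_0^c \frac{d\alpha}{\alpha} = \infty,
\]
while the $L^{p,\infty}$ case has already been handled. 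The main obstacle is expected to be the matching lower bound for $\psi_{ik}^{n,n}(a_t)$ at general $|k|\geq 1$: the upper bound from Theorem~\ref{thm_est_of_psi_k} alone is not sharp, and Lemma~\ref{propn:global_expansion_of_phi_lambda} is stated only for small $\Im\lambda$. This can be addressed either by extending Barker's expansion to arbitrary complex $\lambda$ (the recursive coefficients $a_k^{n,n}$ remaining finite after cancellation of removable singularities), or by invoking an explicit hypergeometric representation of $\psi_{ik}^{n,n}(a_t)$ of the form $(\cosh t)^{-|k|-1}$ times a bounded factor.
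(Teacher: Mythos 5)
Your overall strategy is exactly the paper's: establish the two-sided asymptotic $|\psi_{ik}^{n,n}(a_t)| \simeq e^{-(|k|+1)t}$, then compute the distribution function via the Cartan decomposition and read off the Lorentz-space membership. The distribution-function computation and the translation into $L^{p,\infty}$ and $L^{p,q}$ statements are correct and match the paper.

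The difference lies in how the asymptotic is obtained, and this is where you have a genuine gap that you yourself flag. You evaluate the Harish-Chandra expansion of $\phi_{\tau,\lambda}^{n,n}$ (Lemma \ref{propn:global_expansion_of_phi_lambda}) at $\lambda=i|k|$, using the vanishing of $c_\tau^{n,n}(-i|k|)$ to kill the growing term and the non-vanishing of $c_\tau^{n,n}(i|k|)$ to pin down the leading constant. Both observations about the $c$-function (zero at $-i|k|$, finite nonzero at $i|k|$, via cancellation of simple poles of the Gamma factors) are correct. But Lemma \ref{propn:global_expansion_of_phi_lambda} only establishes the coefficient bounds \eqref{eqn:est_for_a(lambda,t)} on the strip $0\le\Im\lambda\le 1+\tfrac1{10}$, so the argument as written only covers $|k|=1$; for $|k|\ge 2$ you would need an extension, which you acknowledge but do not supply.

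The paper avoids this range issue entirely by using a different representation of the discrete series coefficient: Barker's one-sided expansion $\psi_{ik}^{n,n}(a_t)=2e^{-(1+|k|)t}\sum_{l\ge 0} d_l^{n,n}(k)e^{-2lt}$ from \cite[(12.2)]{Barker}, with $d_l^{n,n}(k)=c_\tau^{n,n}(\pm|k|)\,a_l^{n,n}(\pm|k|)$ from \cite[Proposition 14.2]{Barker} and the bound $|d_l^{n,n}(k)|\le C_k e^{rl}$ (any $r>0$) from \cite[Proposition 13.5]{Barker}. Here the growing branch never appears, and the coefficient bound holds for all $|k|$ without restriction, so the limit $e^{(1+|k|)t}\psi_{ik}^{n,n}(a_t)\to 2d_0^{n,n}(k)\ne 0$ follows at once. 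If you want to salvage your route, the cleanest fix is precisely this: replace the two-sided $\phi$-expansion by Barker's discrete-series expansion and his Propositions 13.5 and 14.2, which is what the paper does.
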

Before we give the proof of the lemma above, let us first provide an asymptotic estimate for the discrete series coefficient.
\begin{lemma} 
    Given $k\in \Z^*$, $n\in \Z(k)$, we have
     \begin{equation}\label{psi_k_est}
         |\psi_{ik}^{n,n}(a_t)|\simeq e^{-(1+|k|)t}, \quad t\geq 0 .
     \end{equation}
 \end{lemma}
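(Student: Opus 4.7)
The plan is to reduce $\psi_{ik}^{n,n}$ to a principal series spherical function using \eqref{relation between psi n,n and phi {n,n}}, then isolate the decaying exponential via a vanishing of the $c$-function. Precisely, $\psi_{ik}^{n,n}(a_t) = \phi_{\tau, i|k|}^{n,n}(a_t)$ where $\tau \in \widehat{M}$ is determined by $k \in \Z^{-\tau}$. I would substitute $\lambda = i|k|$ into the Harish--Chandra series of Lemma \ref{propn:global_expansion_of_phi_lambda}, interpreted as an identity of meromorphic functions extended by analytic continuation past the singular parameter $i\Z$, to obtain
\[
\phi_{\tau,i|k|}^{n,n}(a_t) = e^{-t}\Bigl(e^{-|k|t}\, c^{n,n}_\tau(i|k|)(1+a(i|k|,t)) + e^{|k|t}\, c^{n,n}_\tau(-i|k|)(1+a(-i|k|,t))\Bigr).
\]

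The central observation is that the growing exponential vanishes because $c^{n,n}_\tau(-i|k|) = 0$. From \eqref{defn_c_n,n},
\[
c^{n,n}_\tau(-i|k|) = \frac{1}{\sqrt{\pi}}\, \frac{\Gamma(|k|/2)\, \Gamma((1+|k|)/2)}{\Gamma((1+|k|-|n|)/2)\, \Gamma((1+|k|+|n|)/2)}.
\]
Since $n \in \Z(k)$ forces $|n| \geq |k|+1$, and the opposite-parity condition $n \in \Z^\tau$, $k \in \Z^{-\tau}$ makes $|n|-|k|$ odd, the argument $(1+|k|-|n|)/2$ is a non-positive integer at which $\Gamma$ has a pole; hence $c^{n,n}_\tau(-i|k|) = 0$. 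This collapses the expansion to
\[
\psi_{ik}^{n,n}(a_t) = c^{n,n}_\tau(i|k|)\, e^{-(1+|k|)t}\bigl(1+a(i|k|,t)\bigr).
\]

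The remaining tasks are to check that $c^{n,n}_\tau(i|k|) \neq 0$ and that $|1 + a(i|k|, t)|$ is bounded above and below. For the former, a routine bookkeeping with $\Gamma(z+1) = z\Gamma(z)$ cancels the coincident poles in the numerator and denominator of \eqref{defn_c_n,n} at $\lambda = i|k|$ and produces an explicit nonzero constant depending only on $|n|, |k|$. For the latter, the Harish--Chandra series $a(i|k|, t) = \sum_{j \geq 1}a_j^{n,n}(i|k|) e^{-2jt} = O(e^{-2t})$ as $t \to \infty$, so $|1 + a(i|k|, t)| \in [1/2, 3/2]$ for $t \geq T(n,k)$. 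On the compact range $[0, T]$, both sides are continuous and $\psi_{ik}^{n,n}(e) = \langle e_n^k,e_n^k\rangle_k > 0$; comparability then follows from a compactness argument, supplemented if needed by the explicit hypergeometric representation of $\psi_{ik}^{n,n}$ coming from the radial Casimir ODE \eqref{ef_phi_n,n} to control the behavior between $0$ and $T$. The main obstacle is the gamma-function bookkeeping required to confirm $c^{n,n}_\tau(i|k|) \neq 0$, together with the justification of the Harish--Chandra series at $\lambda = i|k|$, which sits on the singular locus $i\Z$ of the individual terms; the vanishing of $c^{n,n}_\tau(-i|k|)$ is precisely what cancels the apparent singularity and makes the limit meaningful.
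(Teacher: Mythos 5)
Your proposal is correct and follows essentially the same route as the paper: both rest on the convergent expansion $\psi_{ik}^{n,n}(a_t)=e^{-(1+|k|)t}\bigl(c+O(e^{-2t})\bigr)$ with a nonzero leading constant $c$, from which the two-sided bound follows. The only difference is that you re-derive this expansion from Lemma \ref{propn:global_expansion_of_phi_lambda} by analytic continuation to $\lambda=i|k|$ together with the vanishing $c^{n,n}_\tau(-i|k|)=0$, whereas the paper imports the same identity directly from Barker (the expansion (12.2) and Proposition 14.2, which encode precisely the cancellation you describe, plus Proposition 13.5 for the coefficient growth controlling the tail).
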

 \begin{proof}
   Suppose $k \in \Z^*$ and $n\in \Z(k)$, then from \cite[(12.2), Chap. 12]{Barker} we have 
     \begin{equation}\label{psi_asy}
         \psi_{ik}^{n,n} (a_t)  = 2 e^{-(1+|k|)t } \sum_{l=0}^{\infty} d_l^{n,n}(k) \,e^{-2lt}, \quad t>0.
     \end{equation}
     Using the fact that  $\psi_{ik}^{n,n} = \phi_{\tau, i|k|}^{n,n}$ (see \ref{relation between psi n,n and phi {n,n}}), Barker showed that the  coefficient $d_l^{n,n}(k)$ as in \eqref{psi_asy}  satisfies the following relation (see \cite[Proposition 14.2]{Barker})
     \begin{align*}
         d_l^{n,n}(k)=\begin{cases}
             c^{n,n}_\tau (k) a^{n,n}_l (k) \quad \text{if }k>0,\\
             c^{n,n}_\tau (-k) a^{n,n}_l (-k) \quad \text{if }k<0,
         \end{cases}
     \end{align*}
     where $ a^{n,n}_l (k)$'s are defined in \eqref{eqn:recusrsive_reln_of_a_k}. We note that the poles of $\lambda\mapsto c^{n,n}_\tau (\lambda)$ occur only at $\lambda \in i\Z^{\sigma},\, \Im \lambda \geq 0$, where $\sigma$ is determined by $n\in \Z^{\sigma}$ (see Lemma \ref{c^n,n-}). Therefore, using \cite[Proposition 13.5]{Barker}, we can say that for a given $k\in \Z^*$, $n\in \Z(k)$, and for any $r>0$ there exists a constant $C_k>0$ such that for all $l\in \N$
     \begin{align*}
         |d_l^{n,n}(k)|\leq C_k e^{rl}.
     \end{align*}
     Utilizing the inequality above for   $r<1$, it follows from \eqref{psi_asy} that, 
    \begin{align*}
         \lim_{t\rightarrow \infty} \left|e^{(1+|k|)t} \psi_{ik}^{n,n}(a_t)- 2d_0^{n,n}(k)\right| =0,
    \end{align*}
      whence we obtain
     \begin{equation*} 
         |\psi_{ik}^{n,n}(a_t)|\simeq e^{-(1+|k|)t}, \quad \text{for all } t\geq 0.
     \end{equation*}
 \end{proof}
\begin{proof}[\text{Proof of Lemma \ref{lem_psi_k_Lp}}]
     Let us define for $k\in \N$, $$f_k(t) =e^{-(k+1)t} \quad \text{for } t\geq 0,$$ and extend it as a $(n,n)$-type function on $G$ as shown in \eqref{extension a function defined on closure of A+ to G}.   Then  in view of \eqref{psi_k_est}, the asymptotic estimate of   $\psi_{ik}^{n,n}$, it is sufficient to prove that $f_k \in L^{p,\infty}(G//K)$ if and only if $k\geq \gamma_p$.  To establish this, we will first determine the distribution function of $f_k$. Taking into account that $m$ is the Haar measure on $G$ in the polar decomposition and observing that $f_k$ is a $K$-biinvariant function, we arrive at the following expressions:
\begin{align*} 
d_{f_k} (\alpha)& =m \left\{t \in [0,\infty) :  e^{-(k+1)t}> \alpha \right\} =  m \left\{t \in [0,\infty) :  t< \frac{1}{k+1} \log\frac{1}{\alpha}\right\}.
	\end{align*}
 Clearly, if $\alpha\geq 1$, $d_{f_k} (\alpha) =0$, hence we only need to consider $\alpha\in (0,1)$. Let $0<\alpha <e^{-(k+1)/2}$, then $\frac{1}{k+1} \log\frac{1}{\alpha}>1/2$. Thus in this range of  $\alpha$, using the asymptotic behaviour of $\Delta(t)$ we can write
 \begin{align}\label{df_k}
     d_{f_k} (\alpha) \simeq \int^{\frac{1}{2}}_0 t \,dt+  \int_{\frac{1}{2}}^{\frac{1}{k+1}\log\frac{1}{\alpha}} e^{2t}\, dt \simeq {\alpha^{-\frac{2}{k+1}}}.
 \end{align}
 When $e^{-(k+1)/2}<\alpha<1 $,   we can show similarly
 \begin{align*}
     d_{f_k} (\alpha) \simeq  \log {\left(\frac{1}{\alpha}\right)}^{\frac{2}{k+1}}.
 \end{align*}
Thus, from the definition of Lorentz space \eqref{defn_lor}, it follows that
\begin{align*}
  \| f\|_{L^{p,\infty}} =\sup_{\alpha>0}\, \alpha^{p} \, d_{f_k} (\alpha) \simeq \sup_{0<\alpha<1} {\alpha^{p-\frac{2}{k+1}}},
\end{align*}
which in turn implies $\|f_k\|_{L^{p,\infty}}<\infty$ if and only if $p-\frac{2}{k+1}\geq 0$, that is, $k\geq \gamma_p$. Finally, we  will complete the proof of the lemma by contradiction. Let us assume that for a given $k=\gamma_p$, $f_k \in L^{p,q}(G)$ for some $q\in (0,\infty)$.  Then we have from \eqref{df_k} and the definition of Lorentz space \eqref{defn_lor},
\begin{align*}
  \| f\|_{L^{p,q}}^q \simeq \int_0^{e^{-(k+1)/2}} \alpha^{q-1} {\alpha^{-\frac{2q}{p(k+1)}}}   d\alpha \simeq \int_0^{e^{-(k+1)/2}} \alpha^{q-1} {\alpha^{-q}} d\alpha=\infty,
\end{align*}
which contradicts our assumption $f_k \in L^{p,q}(G)$, for $q\in (0,\infty)$, concluding the lemma. \end{proof}
 \begin{remark}
By following a similar calculation as presented in Lemma \ref{lem_psi_k_Lp},  we can establish that any matrix coefficient $\psi_{ik}^{m,n}$  of $\pi_{ik}$, where $m,n\in \Z(k)$ (see \cite[(5.1)]{Barker} for definition), satisfies the following asymptotic estimate:
\begin{align}\label{est_psi_mc}
    |\psi_{ik}^{m,n}(a_t)|\simeq   e^{-(1+|k|) t }, \quad t\geq 0.
\end{align}
Since any $K$-finite matrix coefficient $\psi$ of $\pi_{ik}$ can be represented as a finite linear combination of $\{\psi_{ik}^{m,n}: m,n\in \Z(k) \}$, the matrix coefficients of $\pi_{ik}$ (see \cite[Corollary 5.5]{Barker}), we can combine the arguments from the proof of Lemma \ref{lem_psi_k_Lp} with the estimate in \eqref{est_psi_mc}, to establish the following theorem. This theorem offers a precise characterization of discrete series representations whose $K$-finite matrix coefficients belong to the space $L^{p,\infty} (G)$ for $0<p\leq 2$. This characterization serves as a weak-type analog of Mili\v{c}i\'c's result \cite[Corollary, p.84]{mililic_1977} in the context of $\mathrm{SL}(2,\R)$; see also \cite[Theorem, 5.3]{Barker}. For $0<p\leq 2$, let $E_{p,\infty} (G)$ denote the set of equivalence classes of irreducible representations of $G$ whose $K$-finite matrix coefficient belongs to $L^{p,\infty}(G)$.
\end{remark}
 \begin{theorem} 
    Let $0<p\leq 2$ and $\gamma_p = (2/p)-1$. If $(\pi_{ik}, H_k)$ is a discrete series representation corresponding to $k\in \Z^*$, then the following conditions are equivalent:
\begin{enumerate}
    \item $|k|\geq \gamma_p$,
    \item  $(\pi_k,H_k)$ is in $E_{p,\infty} (G)$.
\end{enumerate}
\end{theorem}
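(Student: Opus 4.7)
The plan is to reduce the statement to Lemma \ref{lem_psi_k_Lp} by exploiting the fact, recorded from \cite[Corollary 5.5]{Barker}, that every $K$-finite matrix coefficient of $\pi_{ik}$ is a finite linear combination of the canonical matrix coefficients $\{\psi_{ik}^{m,n} : m,n\in\Z(k)\}$. Combined with the uniform two-sided asymptotic estimate \eqref{est_psi_mc}, namely $|\psi_{ik}^{m,n}(a_t)|\simeq e^{-(1+|k|)t}$ for $t\geq 0$, this should give us membership questions in $L^{p,\infty}(G)$ for the individual matrix coefficients, which by quasi-triangle inequality in $L^{p,\infty}$ transfers to arbitrary $K$-finite matrix coefficients.

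For the implication $(1)\Rightarrow(2)$, I would fix $|k|\geq\gamma_p$ and $m,n\in\Z(k)$, and extend the $K$-biinvariant model function $f_k(t)=e^{-(1+|k|)t}$ on $\overline{A^+}$ via \eqref{extension a function defined on closure of A+ to G} just as in the proof of Lemma \ref{lem_psi_k_Lp}. Using the Cartan integration formula \eqref{definition of integration}, the asymptotic $\Delta(t)\simeq e^{2t}$, and the estimate \eqref{est_psi_mc}, the distribution function of $|\psi_{ik}^{m,n}|$ satisfies the same bound as that of $f_k$, namely $d_{\psi_{ik}^{m,n}}(\alpha)\lesssim \alpha^{-2/(|k|+1)}$ for small $\alpha$. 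From the definition \eqref{defn_lor} of the Lorentz quasi-norm one obtains $\psi_{ik}^{m,n}\in L^{p,\infty}(G)$ exactly when $p\geq 2/(|k|+1)$, i.e.\ when $|k|\geq\gamma_p$. Now any $K$-finite matrix coefficient $\psi$ of $\pi_{ik}$ is a finite linear combination of these $\psi_{ik}^{m,n}$, and since $L^{p,\infty}(G)$ is a quasi-Banach space (closed under finite linear combinations), we obtain $\psi\in L^{p,\infty}(G)$; hence $(\pi_{ik},H_k)\in E_{p,\infty}(G)$.

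For the converse $(2)\Rightarrow(1)$, I would argue contrapositively. Suppose $|k|<\gamma_p$. Then for $n\in\Z(k)$ the distribution function computation above, combined with the lower bound in \eqref{est_psi_mc}, shows $\sup_{\alpha>0}\alpha^p d_{\psi_{ik}^{n,n}}(\alpha)=+\infty$, so $\psi_{ik}^{n,n}\notin L^{p,\infty}(G)$. Since $\psi_{ik}^{n,n}$ is itself a $K$-finite matrix coefficient of $\pi_{ik}$, this prevents $(\pi_{ik},H_k)$ from lying in $E_{p,\infty}(G)$.

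The main technical point, which I expect is the only place any care is required, is the passage from the one-sided asymptotic estimate $|\psi_{ik}^{m,n}(a_t)|\simeq e^{-(1+|k|)t}$ (valid only for $t\geq 0$) to a genuine two-sided control of the distribution function on all of $G$; but this is handled exactly as in Lemma \ref{lem_psi_k_Lp} because the Cartan decomposition reduces everything to an integral over $\overline{A^+}$ against $\Delta(t)$, so the behaviour near the identity contributes only a harmless bounded term and the tail $t\geq 1/2$ governs the Lorentz norm. Everything else is a bookkeeping argument combining the stability of $L^{p,\infty}$ under finite sums with the reduction to canonical matrix coefficients.
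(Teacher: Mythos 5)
Your proposal is correct and follows exactly the strategy the paper itself sketches in the remark preceding this theorem: reduce general $K$-finite matrix coefficients to finite linear combinations of the canonical coefficients $\psi_{ik}^{m,n}$ via \cite[Corollary 5.5]{Barker}, use the two-sided asymptotic \eqref{est_psi_mc}, and run the distribution-function computation from the proof of Lemma \ref{lem_psi_k_Lp}. You correctly supply the two small missing details the paper leaves implicit --- the quasi-triangle inequality in $L^{p,\infty}$ to pass from individual $\psi_{ik}^{m,n}$'s to their finite linear combinations for $(1)\Rightarrow(2)$, and the contrapositive for $(2)\Rightarrow(1)$ using the lower bound in $\simeq$. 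One small terminological slip: you refer to \eqref{est_psi_mc} as a ``one-sided'' estimate, but $\simeq$ in this paper denotes two-sided comparability ($\lesssim$ and $\gtrsim$); what you actually mean, and correctly address, is that it is only stated on $\overline{A^+}$, which the Cartan integration formula handles.
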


 \section{Singular integral realization of $\Psi_{\sigma}$}\label{decomposition of PSi}

In studying the boundedness of the pseudo-differential operator $\Psi_\sigma$ on symmetric spaces, it is customary to analyze the local and global components of the operator separately. However, in this group setting, there is an additional discrete component of the Plancherel measure. To address this, we will decompose the operator $\Psi_\sigma$ into continuous and discrete parts.  To deal with the continuous part, we will first represent it as a singular integral kernel operator.

Let $f$ be a smooth, compactly supported $(n,n)$ type function on $G$. Then we recall the definition of $\Psi_{\sigma}$ from \eqref{pdo_n,n_intro}
\begin{align}\label{eqn_decom_of_psi_sigma}
\Psi_{\sigma} f(x) &= \frac{1}{4\pi^2} \int_{\R} \sigma (x,\lambda) \what {f_H}(\lambda) \phi_{\tau,   \lambda}^{n,n} (x) |c^{n,n}_{\tau}(\lambda)|^{-2} d\lambda  + \frac{1}{2\pi} \sum_{k\in \Gamma_n} \sigma(x,ik) \what{ f_B}(ik) \psi_{ik}^{n,n}(x) |k| \notag \\
        &=: \Psi_{\sigma}^{\text{con}} f(x) + \Psi_{\sigma}^{\text{dis}} f(x).
\end{align}   
By substituting the expression of $ \what {f_H}(\lambda)$ in the above expression of $ \Psi_{\sigma}^{\text{con}} f$ , we get
\begin{align*}
     \Psi_{\sigma}^{\text{con}} f(x) &=  \frac{1}{4\pi^2} \int_{\R} \sigma (x,\lambda) \left( \int_G f(y) \phi_{\tau,\lambda}^{n,n}(y^{-1}) dy  \right)  \phi_{\tau,   \lambda}^{n,n} (x) |c^{n,n}_{\tau}(\lambda)|^{-2} d\lambda \\
     &= \frac{1}{4\pi^2} \int_{\R} \sigma (x,\lambda) \left( \int_G f(y) \phi_{\tau,\lambda}^{n,n}(y^{-1}) \phi_{\tau,   \lambda}^{n,n} (x) dy  \right)   |c^{n,n}_{\tau}(\lambda)|^{-2} d\lambda.
\end{align*}
By applying the formula for the spherical function $\phi_{\tau,\lambda}^{n,n}$ from Lemma \ref{lem_fnal_id}, the expression above simplifies to:
 \begin{align*}
      \Psi_{\sigma}^{\text{con}} f(x)   &= \frac{1}{4\pi^2} \int_{\R} \sigma (x,\lambda) \left( \int_G f(y) \int_K \phi_{\tau,\lambda}^{n,n}(y^{-1}kx) e_n(k^{-1})\, dk  \,dy  \right)   |c^{n,n}_{\tau}(\lambda)|^{-2} d\lambda.
 \end{align*}
Using Fubini's theorem, followed by the change of variable $y\rightarrow ky$, and taking into account that $f$ is a $(n,n)$ type function on $G$ and $\int_K dk = 1$, we can further simplify this expression as follows:
\begin{align*}
    \Psi_{\sigma}^{\text{con}} f(x)  
        &= \int_{G} f(y) \mathcal{K}^{\text{con}}(x,y^{-1}x) dy,
\end{align*}
where 
\begin{equation}\label{defn_K_con}
\mathcal{K}^{\text{con}}(x,y) :=  \frac{1}{4\pi^2}  \int_{\R }\sigma (x,\lambda) \phi_{\tau, \lambda}^{n,n}(y ) |c^{n,n}_{\tau}(\lambda)|^{-2} d\lambda.
\end{equation}
Similarly, for $\Psi_{\sigma}^{\text{dis}} f $, we can write
\begin{align*}
    \Psi_{\sigma}^{\text{dis}} f(x)= \int_G f(y)  \mathcal{K}^{\text{dis}}(x,y^{-1}x) dy, 
\end{align*}
where \begin{eqnarray*}
    \mathcal{K}^{\text{dis}}(x,y) := \frac{1}{2\pi} \sum_{k\in \Gamma_n} \sigma(x,ik)   \psi_{ik}^{n,n}(y) |k|.
\end{eqnarray*}

In Section \ref{sec_phi_m,n}, we observed that the spherical function $\phi_{\tau,\lambda}^{n,n}$ behaves differently near the identity element compared to its behavior away from it. To account for this distinction and handle the exponential volume growth of $G$, we will decompose $\Psi_{\sigma}^{\text{con}}$ into a sum of local and global parts. To achieve this decomposition, we introduce a smooth, even function $\eta^{\circ}: \R \rightarrow [0,1]$, which is supported on $[-1,1]$ and satisfies $\eta^{\circ}(t) = 1$ if $|t| \leq 1/2$. We define another function $\eta(t) = 1 - \eta^{\circ}(t)$. By utilizing the Cartan decomposition, we extend the functions $\eta^{\circ}$ and $\eta$ to become $K$-biinvariant functions on $G$, by
\begin{equation}\label{defnb_etas}
	\eta^{\circ}(x)=\eta^{\circ}(x^+) \quad \text{and} \quad\eta(x)=\eta(x^+), \quad  \text{ for all } x\in G. 
\end{equation} 
Then we can write,
\begin{equation}\label{eqn_decom_psi_cont}
	\Psi_{\sigma}^{\text{con}} f(x) = \Psi_{\sigma}^{\text{loc}} f(x) + \Psi_{\sigma}^{\text{glo}} f(x),
\end{equation}
where 
    \begin{equation}\label{eqn:defn_of_T_sigma_local}
        \begin{aligned}
    	\Psi_{\sigma}^{\text{loc}}f(x)&:=	\int_{G}  f(y)  \eta^{\circ}(y^{-1} x) \mathcal{K}^{\text{con}}(x,y^{-1} x) dy,\\
            & = \int_{G}  f(y)   \mathcal{K}^{\text{loc}}(x,y^{-1} x) dy,\\
        \end{aligned}
    \end {equation} 
and
    \begin{equation}\label{eqn:defn_of_T_sigma_global}
        \begin{aligned}
    	\Psi_{\sigma}^{\text{glo}}f(x)&:=	\int_{G}  f(y)  \eta(y^{-1} x) \mathcal{K}^{\text{con}}(x,y^{-1} x) dy,\\
            & = \int_{G}  f(y)   \mathcal{K}^{\text{glo}}(x,y^{-1} x) dy.
        \end{aligned}
    \end {equation}

The decomposition of $\Psi_{\sigma}$, as given by \eqref{eqn_decom_of_psi_sigma} and \eqref{eqn_decom_psi_cont}, simplifies the task of proving the boundedness of $\Psi_{\sigma}$. It reduces the problem to demonstrating the boundedness of the individual operators $\Psi_{\sigma}^{\text{dis}}$, $\Psi_{\sigma}^{\text{loc}}$, and $\Psi_{\sigma}^{\text{glo}}$, each of which requires a distinct approach to establish their boundedness. In the remainder of this article, we will examine and discuss the boundedness properties of these operators, $\Psi_{\sigma}^{\text{dis}}$, $\Psi_{\sigma}^{\text{loc}}$, and $\Psi_{\sigma}^{\text{glo}}$, separately.
 
\section{Discrete analysis of  $\Psi_\sigma$}\label{sec_dis_part}
    In this section, we will demonstrate the boundedness of $\Psi^{\text{dis}}_{\sigma}$. While the authors \cite[Lemma 4.4]{Ricci} employed the convolution estimate from the \textit{Kunze-Stein} phenomenon to establish the $L^p$-boundedness of multipliers, our approach utilizes the uniform growth properties of the canonical discrete series matrix coefficients $ \psi_{ik}^{n,n} $ near infinity (Theorem \ref{thm_est_of_psi_k}). This property of $\psi_{ik}^{n,n}$  enables us to show that for all $k\in \Z^*$ and $n\in \Z(k)$,  $ \psi_{ik}^{n,n} \in L^p(G) $ for any $p\in (1,\infty)$ (see \eqref{lem_Lqest_of_psi_k}), leading to the $L^p$-boundedness of the $\Psi^{\text{dis}}_{\sigma}$ operator.

However, the situation is different for $p=1$. Corollary \ref{cor_millic} indicates that a similar technique will not work in this case, as $ \psi_{ik}^{n,n} $ does not belong to $L^1(G)$ when $k =1$ (then $n>0$ is even). Nevertheless, we utilize Lemma \ref{lem_psi_k_Lp} to establish the weak type $(1,1)$-boundedness of $\Psi^{\text{dis}}_{\sigma}$.
\begin{theorem}
     Let $1\leq p<\infty$ and $\Psi_{\sigma}^{\text{dis}}$ be the operator defined   in \eqref{eqn_decom_of_psi_sigma}. Assume that $$\|\sigma\|_{L^{\infty}(G\times i \Gamma_n)} =\sup_{x\in G,\, k\in \Gamma_n} |\sigma(x,ik)|<\infty.$$ Then the following are true
    \begin{enumerate}
            \item The operator $\Psi_{\sigma}^{\text{dis}}$ is a bounded operator from $L^p(G)_{n,n}$ to itself for all $p\in (1,\infty)$.
            \item  When $p=1$ and $n\in \Z^{\tau_-}\cup \{0\}$, then $\Psi_{\sigma}^{\text{dis}}$  is a bounded operator from $L^1(G)_{n,n}$ to itself. If $n\in \Z^{\tau_+}\setminus\{0\}$, $\Psi_{\sigma}^{\text{dis}}$   is a weak type $(1,1)$-bounded operator from $L^1(G)_{n,n}$ to $L^{1,\infty}(G)_{n,n}$.
            \end{enumerate}
\end{theorem}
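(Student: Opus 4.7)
The decisive structural observation is that the sum defining $\Psi_{\sigma}^{\text{dis}}$ ranges over the \emph{finite} set $\Gamma_n$: by \eqref{definition of Z(k)} it is empty when $n=0$ (so there is nothing to prove) and otherwise consists of the integers $k$ of parity opposite to that of $n$ with $1\leq|k|<|n|$. Thus the entire problem reduces, term by term, to an estimate of the form
\[
\|\sigma(\cdot,ik)\,\widehat{f_B}(ik)\,\psi_{ik}^{n,n}\|_{L^p(G)}\lesssim \|f\|_{L^p(G)},
\]
which I plan to obtain by two successive applications of H\"older's inequality, feeding on the $L^p$-integrability of the discrete series matrix coefficients supplied by Corollary~\ref{cor_millic} and Lemma~\ref{lem_psi_k_Lp}.

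For part (1), i.e.\ $p\in(1,\infty)$, the pointwise inequality
\[
|\Psi_{\sigma}^{\text{dis}}f(x)|\leq \tfrac{1}{2\pi}\|\sigma\|_{L^{\infty}(G\times i\Gamma_n)}\sum_{k\in\Gamma_n}|k|\,|\widehat{f_B}(ik)|\,|\psi_{ik}^{n,n}(x)|
\]
combined with the H\"older bound $|\widehat{f_B}(ik)|\leq \|f\|_{L^p}\|\psi_{ik}^{n,n}\|_{L^{p'}}$ reduces matters to verifying $\psi_{ik}^{n,n}\in L^p(G)\cap L^{p'}(G)$ for every $k\in\Gamma_n$. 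Since $\gamma_p=\gamma_{p'}<1\leq|k|$ in this range, Corollary~\ref{cor_millic} (or directly \eqref{lem_Lqest_of_psi_k}) provides exactly this membership, and the finiteness of $\Gamma_n$ absorbs the remaining sum into a constant $C_{p,n}$.

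For part (2), at $p=1$ we have $\gamma_1=1$ and the membership $\psi_{ik}^{n,n}\in L^1(G)$ requires the \emph{strict} inequality $|k|>1$. When $n\in\Z^{\tau_-}$, every $k\in\Gamma_n$ is even, hence $|k|\geq 2$, and the argument of the previous paragraph (with the trivial estimate $|\widehat{f_B}(ik)|\leq\|f\|_{L^1}\|\psi_{ik}^{n,n}\|_{L^\infty}$) delivers strong $L^1$-boundedness. The genuinely delicate case is $n\in\Z^{\tau_+}\setminus\{0\}$, in which $\Gamma_n$ contains the endpoint index $k_0:=\mathrm{sign}(n)$ with $|k_0|=1=\gamma_1$. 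Splitting $\Psi_\sigma^{\text{dis}}=T_0+R$ where $T_0$ is the single $k=k_0$ term and $R$ collects the remaining $|k|\geq 3$ terms, the operator $R$ is $L^1$-bounded as above. For $T_0$, the pointwise bound
\[
|T_0 f(x)|\leq \tfrac{1}{2\pi}\|\sigma\|_\infty\|\psi_{ik_0}^{n,n}\|_\infty\,\|f\|_{L^1}\,|\psi_{ik_0}^{n,n}(x)|
\]
together with Lemma~\ref{lem_psi_k_Lp}, which guarantees $\psi_{ik_0}^{n,n}\in L^{1,\infty}(G)$ precisely because $|k_0|=\gamma_1$, yields weak type $(1,1)$ boundedness of $T_0$; combined with the inclusion $L^1\hookrightarrow L^{1,\infty}$ applied to $R$, this gives the claimed weak type $(1,1)$ bound for $\Psi_\sigma^{\text{dis}}$.

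The only real obstacle is the endpoint $|k|=1$ at $p=1$: the naive $L^1$ argument fails exactly there, and the weak-type characterization provided by Lemma~\ref{lem_psi_k_Lp} (rather than the strong-$L^1$ criterion of Corollary~\ref{cor_millic}) is what repairs the situation and explains the asymmetry in the statement between $n\in\Z^{\tau_-}\cup\{0\}$ and $n\in\Z^{\tau_+}\setminus\{0\}$.
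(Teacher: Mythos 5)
Your proposal is correct and follows essentially the same route as the paper: both proofs exploit the finiteness of $\Gamma_n$, estimate $\widehat{f_B}(ik)$ via H\"older's inequality against $\psi_{ik}^{n,n}\in L^{p'}(G)$ (using \eqref{lem_Lqest_of_psi_k}), and invoke Lemma~\ref{lem_psi_k_Lp} to handle the endpoint $|k|=1$ at $p=1$. The only cosmetic difference is that in the case $p=1$, $n\in\Z^{\tau_+}\setminus\{0\}$ you split off the single $|k|=1$ term and treat the remainder as strongly $L^1$-bounded, whereas the paper applies the quasi-triangle inequality of $L^{1,\infty}$ uniformly to the whole (finite) sum; both give the same bound.
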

\begin{proof}
First, let us proceed with the case $1<p<\infty$. Let $f\in L^{p}(G)_{n,n}$, then using H\"{o}lder's inequality and \eqref{lem_Lqest_of_psi_k}, we obtain
\begin{align*}
    \left| \what{ f_B}(ik)\right| = \left| \int_G f(x) {\psi_{ik}^{n,n}}(x^{-1}) dx \right| \leq \|f\|_{L^p(G)} \|\psi_{ik}^{n,n}\|_{L^{p'}(G)}, 
\end{align*}
for all $k \in \Gamma_n$. Therefore, using the inequality above, we can write the following.
\begin{align*}
     \|\Psi_{\sigma}^{\text{dis}} f\|_{L^p(G)}^p &\leq C_{p,n} \sum_{k\in \Gamma_n} |\what{ f_B}(ik)|^p \int_{G} |\sigma(x,ik)  \psi_{ik}^{n,n}(x) k|^p dx \\
                    &\leq C_{p,n} \|\sigma\|_{L^{\infty}(G\times i \Gamma_n)}^p \sum_{k\in \Gamma_n} \|f\|_{L^p(G)}^p \|\psi_{ik}^{n,n}\|_{L^{p'}(G)}^p \int_{G} | \psi_{ik}^{n,n}(x)|^p dx\\
                    &\leq C_{p,n} \|\sigma\|_{L^{\infty}(G\times i \Gamma_n)} ^p \|f\|_{L^p(G)}^p \sum_{k\in \Gamma_n} \|\psi_{ik}^{n,n}\|_{L^{p'}(G)}^p \|\psi_{ik}^{n,n}\|_{L^{p}(G)}^p\\
                    &\leq C_{p,n} \|\sigma\|_{L^{\infty}(G\times i \Gamma_n)}^p \|f\|_{L^p(G)}^p. 
\end{align*}
This establishes the $L^p(G)_{n,n}$-boundedness of $\Psi^{\text{dis}}_{\sigma}$ for $1 < p < \infty$. In the case of $p = 1$ and $n \in \Z^{\tau-} \cup \{0\}$, the proof for the $L^1(G)_{n,n}$-boundedness of $\Psi_{\sigma}^{\text{dis}}$ follows the same procedure as in the previous case. Lastly, for $p = 1$ and $n \in \Z^{\tau_+} \setminus \{0\}$, by utilizing H\"{o}lder's inequality and Lemma \ref{lem_psi_k_Lp}, we obtain the following for $f \in L^1(G)_{n,n}$:
\begin{align*}
     \|\Psi_{\sigma}^{\text{dis}} f\|_{L^{1,\infty}(G)} &\leq C_{n} \sum_{k\in \Gamma_n} |\what{ f_B}(ik)|  \|\sigma(\cdot,ik)  \psi_{ik}^{n,n}(\cdot) k\|_{L^{1,\infty}(G)}  \\
                    &\leq C_{n} \|\sigma\|_{L^{\infty}(G\times i \Gamma_n)}  \sum_{k\in \Gamma_n} \|f\|_{L^1(G)} \|\psi_{ik}^{n,n}\|_{L^{\infty}(G)} \| \psi_{ik}^{n,n}\|_{L^{1,\infty}(G)} \\
                    &\leq C_{n}  \|\sigma\|_{L^{\infty}(G\times i \Gamma_n)}  \|f\|_{L^1(G)} \sum_{k\in \Gamma_n} \|\psi_{ik}^{n,n}\|_{L^{\infty}(G)} \|\psi_{ik}^{n,n}\|_{L^{1,\infty}(G)}\\
                    &\leq C_{n}  \|\sigma\|_{L^{\infty}(G\times i \Gamma_n)}  \|f\|_{L^1(G)},
\end{align*}
whence we obtain the weak type $(1,1)$-boundedness of $\Psi^{\text{dis}}_{\sigma}$.
\end{proof}

\section{Local analysis of $\Psi_{\sigma}$}\label{Analysis on the local part}

In the case of the multiplier operator, the local part can be expressed as a convolution with a compactly supported function. By using a Coifmann-Weiss transference principle for convolution, the authors in \cite{Ionescu_2000,Ricci} related the multiplier operator to the Euclidean multiplier. Then, the boundedness of the multiplier operator follows from the Mikhlin multiplier theorem on $\R$. However, a fundamental difference arises between the multiplier case and our current situation with the pseudo-differential operator. In the case of the multiplier operator, writing it as a convolution operator plays a crucial role, but the presence of an extra variable $x$ in the symbol $\sigma(x, \lambda)$ of the pseudo-differential operator prevents us from using the theory of multipliers.

We establish the boundedness of $\Psi_{\sigma}^{\text{loc}}$ by employing a generalized Coifman-Weiss transference principle in Section \ref{sec_transference_principle_weak} for the kernel integral operator. This principle allows us to establish a connection between the $L^p$-boundedness of $\Psi_{\sigma}^{\text{loc}}$   and Euclidean pseudo-differential operators. Our analysis of the local part of $\Psi_\sigma$ leads to the following result.
	 
    \begin{theorem}\label{thm:Lp_boundedness_of_T_sigma_local} Let $\Psi_{\sigma}^{\text{loc}}$ be the operator defined   in \eqref{eqn:defn_of_T_sigma_local}. Assume that $\sigma : G \times \R \mapsto \C$ is a function satisfying the following properties:
    \begin{enumerate}
        \item For each $\lambda \in \R,$  $x\mapsto \sigma(x,\lambda)$ is a $K$-biinvariant function on $G$.
        \item For each $x \in G,\, \lambda \mapsto \sigma(x,\lambda)$ is an even function on $\R$.
        \item For each $\overline{v} \in \overline{N}$, the function $(s,\lambda) \mapsto \sigma_{\overline{v}}(s,\lambda):=\sigma(\overline{v} a_s, \lambda)\in {\mathcal{H}{(\R,2,2)}} $ and  
        \begin{equation}\label{hyp_sigma_local}
            \sup\limits_{\overline{v}\in \overline{N}}   \| \sigma_{\overline{v}}\|_{\mathcal{H}{(\R,2,2)}} <\infty,
        \end{equation}
        where $\| \sigma\|_{\mathcal{H}{(\R,2,2)}}$ is defined in \eqref{eqn_defn_Hormander_R}.
     
    \end{enumerate}
Then, for $1<p<\infty$, $\Psi_{\sigma}^{\text{loc}}$ is bounded from $L^{p}(G)_{n,n}$ to itself. Moreover, there exists a constant $C_{p,n}>0$ such that
\begin{align}\label{result_loc}
    \|\Psi_{\sigma}^{\text{loc}} f\|_{L^{p}(G)}\leq C_{p,n} \left( \sup\limits_{\overline{v}\in \overline{N}}   \| \sigma_{\overline{v}}\|_{\mathcal{H}{(\R,2,2)}} \right) \|f\|_{L^{p}(G)},
    \end{align}
for all $f\in L^{p}(G)_{n,n}$.  
  \end{theorem}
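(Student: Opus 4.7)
The plan is to combine the local expansion of the spherical function $\phi_{\tau,\lambda}^{n,n}$ with the generalized Coifman--Weiss transference principle in order to reduce $\Psi_\sigma^{\text{loc}}$ to a family of Euclidean pseudo-differential operators falling under Theorem \ref{thm:pdo_for_family_of_symbols}.

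First, since $|f|$ is $K$-biinvariant whenever $f$ is of $(n,n)$-type, the Iwasawa integration formula \eqref{eqn:integral-decom-nbar-a} identifies $L^p(G)_{n,n}$ with a weighted $L^p(\bar{N}A)$ space, and I would recast $\Psi_\sigma^{\text{loc}}$ as an integral operator on this space, absorbing the factor $e^{2s}$ coming from Iwasawa by the substitution $f(\bar{v}a_s)\leftrightarrow e^{2s/p}f(\bar{v}a_s)$. Because $\eta^\circ$ is supported in a small Cartan neighbourhood of the identity, the resulting kernel is compactly supported; moreover, on this support, the Cartan coordinate $t$ of $y^{-1}x$ can be expressed as a smooth function of the Iwasawa coordinates of $x$ and $y$ by means of Lemma \ref{lemma:expression_of_v-a _r}.

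Second, on the support of $\eta^\circ$ I would insert the local expansion of Lemma \ref{thm:localexpansion-phi},
\begin{equation*}
\phi_{\tau,\lambda}^{n,n}(a_t) = \left(\tfrac{t}{\Delta(t)}\right)^{1/2}\sum_{j=0}^{2} t^{2j} b_j^n(t)\, \mathcal{J}_j(\lambda t) + E_n(\lambda,t),
\end{equation*}
and observe that the error contribution produces a kernel with uniform $L^1$-norm in $\lambda$, hence a bounded operator on every $L^p$ by Schur's test. For the Bessel-type main terms, the derivative bound $|c^{n,n}_\tau(\lambda)|^{-2}\lesssim (1+|\lambda|)$ from Lemma \ref{lemma:est_of_|c(-lambda)|^-2} together with standard Fourier-integral representations of the $\mathcal{J}_j$ rewrite the $\lambda$-integral (after a smooth change of variables in $t$) as an oscillatory Fourier-type integral whose amplitude is built out of $\sigma$ and tame smooth prefactors.

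Third, I would apply the strong $L^p$-form of the Coifman--Weiss transference principle \eqref{eqn:Lp_est_for_Pi}, taking the representation of $G$ on itself by right translation. The compact support of the kernel then reduces verification of the transference hypothesis \eqref{eqn:req_est_for_pi} to uniform $L^p(\R)$-boundedness of a family of Euclidean pseudo-differential operators $a_{\bar{v}}(s,D)$ indexed by $\bar{v}\in\bar{N}$, whose symbols $a_{\bar{v}}(s,\lambda)$ are finite sums of products of $\sigma(\bar{v}a_s,\lambda)$ with the bounded smooth prefactors produced in the previous step. Hypothesis \eqref{hyp_sigma_local} and the Leibniz rule then yield the derivative inequalities \eqref{eqn:condn_on_symbol_family} for $a_{\bar{v}}$ for $\alpha,\beta\in\{0,1,2\}$ uniformly in $\bar{v}$, so Theorem \ref{thm:pdo_for_family_of_symbols} produces the required uniform Euclidean bound and, through the transference, the estimate \eqref{result_loc}.

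The main obstacle I anticipate is the bookkeeping in the third step: one must track how the $\bar{N}$-integration survives after transference (it has to play the role of a measure-preserving parameter, because no regularity in $\bar{v}$ is assumed on $\sigma$), how the weight $e^{2s}$ is absorbed without disturbing the derivative estimates on the effective symbol, and how the Jacobian of the Iwasawa-to-Cartan change of variables enters the prefactor. Once these accounting steps are carried out, the analysis is almost mechanical and the Euclidean boundedness supplied by Theorem \ref{thm:pdo_for_family_of_symbols} closes the argument.
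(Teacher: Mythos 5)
Your strategy matches the paper's in its essentials: use the generalized Coifman--Weiss transference to reduce $\Psi_\sigma^{\text{loc}}$ to a family of one-dimensional operators parameterized by $\overline{v}\in\overline{N}$ (thanks to the $K$-biinvariance of $\sigma$), insert the local Bessel-type expansion of $\phi_{\tau,\lambda}^{n,n}$ from Lemma \ref{thm:localexpansion-phi}, and close with Theorem \ref{thm:pdo_for_family_of_symbols}. Where the two diverge is in the first reduction. The paper never passes to Iwasawa coordinates for the local part: it changes variables $y\mapsto xy$, writes $y$ in \emph{Cartan} coordinates $k_1 a_t k_2$, eliminates the $K$-factors by the $(n,n)$-type structure together with Minkowski's inequality in $k$, and lands directly on \eqref{Psi_Lp_est} with the representation $\mathcal{R}_t x = x a_t$ of $\R$ acting on $G$ --- with no $e^{2s}$ weight anywhere. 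Your proposal instead coordinatizes both $x$ and $y$ by Iwasawa, and the $e^{2s}$-absorption you sketch is genuinely delicate: after the conjugation $f\mapsto e^{2s/p}f$ the kernel acquires a factor $e^{2s/p}e^{2t/p'}=e^{2t}e^{2(s-t)/p}$, and the unbounded $e^{2t}$ only cancels after a further $\overline{N}$-dilation change of variable $\overline{u}=\delta_{-t}(\overline{w}^{-1}\overline{v})$, which in turn twists the $\overline{N}$-argument of $f$ in a $t$-dependent way. None of this is wrong, but it is precisely the bookkeeping the paper's Cartan-based reduction sidesteps; calling it ``almost mechanical'' understates the work left.

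Two smaller inaccuracies: (i) the transference should be stated for the representation of $\R$ (equivalently $A$) acting on $X=G$ by $\mathcal{R}_t x = xa_t$, not ``a representation of $G$ on itself''; in the Coifman--Weiss setup the acting group and the measure space play distinct roles, and the dichotomy matters when you invoke \eqref{eqn:req_est_for_pi}. (ii) Lemma \ref{lemma:expression_of_v-a _r} has no role in the local analysis: it gives the approximation $[\overline{v}a_r]^+ = r + H(\overline{v}) + E(\overline{v},r)$ with the error only small for large $r$, and the paper uses it exclusively in the \emph{global} part (Section \ref{sec_glo_par}). For the local part there is no need to relate Iwasawa and Cartan coordinates at all, because $y$ itself is written in Cartan form.
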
 
  \begin{remark}
      We note that the norm of $\sigma_{\ol v}$ in \eqref{result_loc} is taken in $\mathcal{H}{(\R,2,2)}$ not in $\mathcal{H}{(S_p,2,2)}$.
  \end{remark}
  Before delving into the proof of Theorem \ref{thm:Lp_boundedness_of_T_sigma_local}, let us first obtain a quantitative estimate of $\| \Psi_\sigma^{\text{loc}} f \|_{L^p(G)}$ for $f \in C_c^{\infty} (G)_{n,n}$ and $p \in [1,\infty)$. We will use this estimate to establish the $L^p$-boundedness of the operator  $\Psi_{\sigma}^{\text{loc}}$. Let us recall the definition \eqref{eqn:defn_of_T_sigma_local} of $\Psi_{\sigma}^{\text{loc}}$ and proceed with a change of variable, leading to the following expression:
  \begin{align*}
      \| \Psi_\sigma^{\text{loc}} f \|_{L^p(G)}= \left( \int_{G} \left|  \int_{G} f(y) \mathcal{K}^{\text{loc}}(x, y^{-1}x) dy  \right|^{p}  dx \right)^{\frac{1}{p}} =  \left( \int_{G} \left|  \int_{G} f(xy) \mathcal{K}^{\text{loc}}(x, y^{-1}) dy  \right|^{p}dx  \right)^{\frac{1}{p}}.
  \end{align*}	
 By writing the integral formula \eqref{definition of integration} corresponding to the Cartan decomposition, the right-hand side of the equation above transforms into 
 \begin{align*}
     &\left( \int_{G} \left| \int_{K} \int_{\R} f(xka_t) \mathcal{K}^{\text{loc}}(x,  (ka_t)^{-1})  |\Delta(t)| dk dt    \right|^p dx \right)^{\frac{1}{p}}\\
     &\leq \int_{K} \left( \int_{G} \left|  \int_{\R} f(xka_t)  \mathcal{K}^{\text{loc}}(x,  a_t)  |\Delta(t)|  dt    \right|^p dx\right)^{\frac{1}{p}}dk,
 \end{align*}
 where in the last step, we employed  Minkowski’s integral inequality and used the fact that $x \mapsto \mathcal{K}^{\text{loc}}(x,y)$ is an $(n,n)$ type function on $G$.  Next, by the change of variable $xk \rightarrow x$, and utilizing the fact that $K$ is a compact subgroup of $G$ with $\int_K dk = 1$, we obtain the following:
 \begin{align}\label{Psi_Lp_est}
     \| \Psi_\sigma^{\text{loc}} f \|_{L^p(G)} \leq  	& \left( \int_{G}  \left|  \int_{\R} f(\mathcal {R}_{t} x) \,     \mathcal{K}^{\text{loc}}(x, a_t) |\Delta(t)|  dt    \right|^p dx\right)^{\frac{1}{p}},
 \end{align}
 where  $ \{\mathcal{R}_t: t \in \R\} $  are the representations consisting of measure-preserving transformations of the space $G$, defined by
	\bes
 \mathcal{R}_t x :=xa_t, \quad   x \in G.
	\ees 
 In particular, the representations $ \{\mathcal{R}_t: t \in \R\} $ preserve the $L^p$ norm of any function on $G$, that is for any $f \in L^p(G)$, we have
 \begin{align*}
      \|f(\mathcal{R}_t x )\|_{L^p(G)}= \|f\|_{L^p(G)}, \quad \text{for all } t\in \R.
 \end{align*}
 Next, we will apply the generalized Coifman-Weiss transference principle to the integral operator on the right-hand side of \eqref{Psi_Lp_est} in order to establish the $L^p(G)_{n,n}$-boundedness of the operator $\Psi_{\sigma}^{\text{loc}}$.

\subsection{Application of generalized Coifman-Weiss transference principle} 
The proof of Theorem \ref{thm:Lp_boundedness_of_T_sigma_local} follows as a consequence of the generalized Coifman-Weiss transference principle in Section \ref{sec_transference_principle_weak} and the following corollary. This corollary plays a crucial role in applying the Coifman-Weiss transference principle in our specific context, as it provides the explicit estimate required to use   \eqref{eqn:req_est_for_pi} in our setting. Let us define the Coifman-Weiss kernel $\mathcal{K}^{\text{CW}}: G \times G\times \R \rightarrow \C$ by
\begin{align}\label{def_CW_kernel}
\mathcal{K}^{\text{CW}}(x,y,t) =\mathcal{K}^{\text{con}}(x,y^{-1}x) \eta^\circ (t) |\Delta(t)|.
\end{align} 
\begin{corollary}\label{cor_trans_cw}
    For $ x\in G $, let us recall   $\mathcal R_s x=xa_s$, $ s\in \R $. Assume that there exists a constant $\mathcal{C}>0$ such that for $1<p<\infty$,
		\begin{equation}\label{eqn:Coifman-Weiss_condn_satisified_by_K}
			\left(	\int_{\R} \left| \int_{\R} \mathcal{K}^{\text{CW}}(\mathcal{R}_s x, \mathcal{R}_{-t} \mathcal{R}_s x , t)   h(s-t) dt\right|^{p} ds\right) ^{\frac{1}{p}}\leq \mathcal{C} \| h\|_{L^p(\R)} 
		\end{equation}
		 for all $ h\in L^p(\R) $. Then we have 
    $$\|\Psi_{\sigma}^{\text{loc}} f\|_{L^{p}(G)}\leq \mathcal{C}  \|f\|_{L^{p}(G)},$$
    for all $\gamma>0$, $f \in L^p(G)_{n,n}$, where $\mathcal{C}$ is the same constant as in \eqref{eqn:Coifman-Weiss_condn_satisified_by_K}.
\end{corollary}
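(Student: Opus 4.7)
The plan is to apply the generalized Coifman--Weiss transference principle in its $L^p$-strong-type form, i.e.\ the implication \eqref{eqn:req_est_for_pi} $\Rightarrow$ \eqref{eqn:Lp_est_for_Pi}. For the underlying measure space I take $X = G$ with Haar measure, for the representing locally compact group the amenable group $\R$ (whose averaging covering condition is trivial via $V = [-R,R]$ with $R$ large), for the representation the right translation $\mathcal{R}_t x = x a_t$ (which preserves Haar measure by the unimodularity of $G$), and for the kernel $k(x,y,t) := \mathcal{K}^{\text{CW}}(x,y,t)$. Because of the cutoff $\eta^{\circ}$, this $k$ is supported in $t \in [-1,1]$, matching the compact-support requirement on the group variable in \eqref{eqn_kernel_type}.

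First I will show that the transferred operator
\begin{equation*}
Tf(x) \;:=\; \int_{\R} k\bigl(x, \mathcal{R}_t x, t\bigr)\, f(\mathcal{R}_t x)\, dt
\end{equation*}
dominates the integrand on the right-hand side of \eqref{Psi_Lp_est}. A short computation gives
\begin{equation*}
k(x, \mathcal{R}_t x, t) \;=\; \mathcal{K}^{\text{con}}\!\bigl(x, (xa_t)^{-1}x\bigr)\,\eta^{\circ}(t)|\Delta(t)|
\;=\; \mathcal{K}^{\text{con}}(x, a_{-t})\,\eta^{\circ}(t)|\Delta(t)|,
\end{equation*}
and since the spherical functions $\phi_{\tau,\lambda}^{n,n}$ are even on $A$ (Section~\ref{sec:The_group_SL(2,R)}), this equals $\mathcal{K}^{\text{loc}}(x, a_t)\,|\Delta(t)|$. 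Hence $|Tf(x)|$ is exactly the inner expression in \eqref{Psi_Lp_est}, so $\|\Psi^{\text{loc}}_{\sigma}f\|_{L^p(G)} \le \|Tf\|_{L^p(G)}$.

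Second, I will check the abstract hypothesis \eqref{eqn:req_est_for_pi} for our $k$. Writing the $\R$-group law additively gives $u^{-1}v = v-u$ and $\mathcal{R}_{u^{-1}v}x = xa_{v-u} = \mathcal{R}_{-u}\mathcal{R}_v x$, so the reduced kernel is
\begin{equation*}
k_x(v,u) \;=\; k\bigl(\mathcal{R}_v x,\,\mathcal{R}_{-u}\mathcal{R}_v x,\, u\bigr)
\;=\;\mathcal{K}^{\text{CW}}\bigl(\mathcal{R}_v x,\,\mathcal{R}_{-u}\mathcal{R}_v x,\, u\bigr).
\end{equation*}
The Coifman--Weiss condition $\bigl\|\int_{\R} k_x(v,u)\, h(v-u)\, du\bigr\|_{L^p_v(\R)} \le \mathcal{C}\,\|h\|_{L^p(\R)}$, uniform in $x \in G$, is then precisely the hypothesis \eqref{eqn:Coifman-Weiss_condn_satisified_by_K} of the corollary, after renaming $v \mapsto s$ and $u \mapsto t$.

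Concluding, \eqref{eqn:Lp_est_for_Pi} yields $\|Tf\|_{L^p(G)} \le \mathcal{C}\|f\|_{L^p(G)}$, which combined with the first step proves the corollary. The corollary itself presents no genuine obstacle once one has the passage in \eqref{Psi_Lp_est}; it is a clean ``transfer'' statement reducing the local $L^p$-boundedness of $\Psi_{\sigma}$ to a one-dimensional vector-valued Euclidean inequality, and it is this latter inequality, to be handled later with the Euclidean pseudo-differential theory of Theorem~\ref{thm:pdo_for_family_of_symbols}, where the actual quantitative work will take place.
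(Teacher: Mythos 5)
Your proposal is correct and follows essentially the same route as the paper's proof: you define the transferred operator $T$ via the kernel $\mathcal{K}^{\text{CW}}$, observe that the $t$-cutoff $\eta^{\circ}$ makes it a Coifman--Weiss operator of type \eqref{eqn_kernel_type} over the group $\R$, identify the strong-type hypothesis \eqref{eqn:req_est_for_pi} with \eqref{eqn:Coifman-Weiss_condn_satisified_by_K} after the substitutions $v\mapsto s$, $u\mapsto t$, and combine \eqref{eqn:Lp_est_for_Pi} with \eqref{Psi_Lp_est}. You spell out the verification that $Tf$ coincides with the inner expression in \eqref{Psi_Lp_est} (using evenness of $\phi^{n,n}_{\tau,\lambda}$ on $A$) where the paper leaves this implicit, and you invoke only the strong-type transference principle where the paper also mentions Theorem~\ref{thm_transference_weak}; these are cosmetic differences, not a different argument.
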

\begin{proof}
   Let us define the operator \begin{equation}\label{cw_our}
       (Tf)(x)= \int_{\R} \mathcal{K}^{\text{CW}}(x, \mathcal{R}_t x,t  )    f(\mathcal{R}_t x) \, dt, \quad \text{for } f\in C_c^{\infty}(G)_{n,n}.
   \end{equation}
   As the function $t\mapsto \mathcal{K}^{\text{CW}}(x, \mathcal{R}_tx,t)$ is compactly supported, we can compare \eqref{cw_our} with the Coifmann-Weiss transference operator in \eqref{eqn_kernel_type}. Furthermore, equation \eqref{eqn:Coifman-Weiss_condn_satisified_by_K} indicates that the kernel of the operator $T$  satisfies \eqref{eqn:req_est_for_pi} and the hypothesis of Theorem \ref{thm_transference_weak}. Thus, in view of  \eqref{Psi_Lp_est}  and by applying the generalized Coifman-Weiss transference principle \eqref{eqn:req_est_for_pi}, we establish our corollary.
\end{proof}
\subsection{Kernel of the Coifman-Weiss transference operator}\label{kernel_CW} The corollary above simplifies the proof of Theorem \ref{thm:Lp_boundedness_of_T_sigma_local}, since we now only need to focus on establishing \eqref{eqn:Coifman-Weiss_condn_satisified_by_K} with 
\begin{align*}
    \mathcal{C}=    \sup\limits_{\overline{v}\in \overline{N}}   \| \sigma_{\overline{v}}\|_{\mathcal{H}{(\R,2,2)}} .
\end{align*} 
The remaining part of this section will be dedicated to proving the inequality \eqref{eqn:Coifman-Weiss_condn_satisified_by_K}, assuming the hypotheses of Theorem \ref{thm:Lp_boundedness_of_T_sigma_local}.
 We observe from the definition \eqref{def_CW_kernel} of  the Coifman-Weiss kernel $\mathcal{K}^{\text{CW}}$ that 
 \begin{equation*} 
 \begin{aligned}
		\mathcal{K}^{\text{CW}}(\mathcal{R}_s x, \mathcal{R}_{-t} \mathcal{R}_s x , t) &=   \mathcal{K}^{\text{con}}(xa_s,  a_{t}) \eta^{\circ}(t) \Delta(t), \quad t\geq 0.
\end{aligned}
\end{equation*}
By recalling the expression \eqref{defn_K_con} of $\mathcal{K}^{\text{con}}$, we get
 \begin{equation}\label{eqn_defn_ofK_1}
 \begin{aligned}
		\mathcal{K}^{\text{CW}}(\mathcal{R}_s x, \mathcal{R}_{-t} \mathcal{R}_s x , t) = \eta^\circ(a_t) \Delta(t)  \int_{\R}\sigma(x a_s,\lambda ) \phi_{\tau,\lambda}^{n,n}(a_t) |c^{n,n}_{\tau}(\lambda)|^{-2} d\lambda.
\end{aligned}
\end{equation}

 It is not immediately evident from the given hypotheses that $\mathcal{K}^{\text{CW}}$ exists other than in a distributional sense. Throughout the rest of the paper, we will consistently assume that the symbol satisfying estimates as in Theorem \ref{thm:pdo_for_n,n_type} are, in fact, rapidly decreasing, although our estimates will not depend on the rate of decrease. Explicitly, we   assume $ \sigma(\cdot, \lambda) $ is multiplied with a factor of the form $ e^{-\epsilon \lambda^2} ,$ where $ 0<\epsilon \leq 1 $.  This assumption will enable us to define pointwise functions and perform various formal manipulations, such as integration by parts. Our estimates are uniform in $ \epsilon $. Once we prove suitable uniform estimates, standard limiting arguments allow us to pass to the general case. 
 
We will prove \eqref{eqn:Coifman-Weiss_condn_satisified_by_K} in the following steps:
	
 \textbf{Step 1: Expansion of the Kernel $ \mathcal{K}^{\text{CW}} $.}  We will utilize the asymptotic expansion of $ \phi^{n,n}_{\tau,\lambda} $ near the identity to express $ \mathcal{K}^{\text{CW}} $   in terms of simpler or well-behaved functions.

	Let  $ \varPhi $ is a smooth even function on $ \R $, with $ 0\leq \varPhi\leq 1 $; $ \varPhi=1 $ when $ |\lambda|>2$; $ \varPhi(\lambda)=0 $ when $ |\lambda|<1 $. By extending the approach from \cite[Proposition 4.1]{Stanton_and_Tomas} and applying Lemma \ref{thm:localexpansion-phi}, we can derive an equivalent result to \cite[Proposition 4.3]{PR_PDO_22} within our framework. This will enable us to decompose $\mathcal{K}^{\text{CW}}$ as follows:
	\begin{equation}\label{eqn:separation_of_kernel}
		\mathcal{K}^{\text{CW}}(\mathcal{R}_s x, \mathcal{R}_{-t} \mathcal{R}_s x , t)    =   \mathcal{K}_0(xa_s,t) + \mathcal{K}_{\text{err}}(xa_s,t),
	\end{equation} 
	satisfying the following:
	\begin{enumerate}\item[(i)]There is some constant $ C>0 $ and $\kappa \in L^1(\R) $, such that
		\begin{equation}\label{eqn:condn_zeta_local_part}
			| \mathcal{K}_{\text{err}}(x, t)| \leq C\,\left(\sup\limits_{\overline{v}\in \overline{N}}   \| \sigma_{\overline{v}}\|_{\mathcal{H}{(\R,0,2)}}\right)  \mathcal{\kappa}(t), \text{ for all   $ x \in G$} ,
		\end{equation} \text{and}
		\item [(ii)] $ 
			\mathcal{K}_0(xa_s,t)  =  C \eta^\circ(a_t )\,  \Delta(t) \, \left( \frac{t}{\Delta(t)}\right)^{1/2} \int_{0}^\infty \varPhi(\lambda)  \sigma(xa_s, \lambda ) \mathcal{J}_{0} (\lambda t) |c^{n,n}_{\tau}(\lambda)|^{-2} d\lambda,
		 $
	\end{enumerate}
 where we recall $ \mathcal{J}_{0}$ is the  generalized Bessel function defined as in \eqref{defn_J_mu}.

	 \textbf{Step 2: Connection with Euclidean pseudo-differential operator.} 
  From \eqref{eqn:condn_zeta_local_part}, we notice that we can bound $|\mathcal{K}_{\text{err}}(xa_s, \cdot)|$ with an integrable function $\kappa$ in $\R$ that remains independent of `$x$' and `$s$'. Consequently, it's evident that $\mathcal{K}_{\text{err}}$ satisfies \eqref{eqn:Coifman-Weiss_condn_satisified_by_K}. To establish the assumptions of Corollary \ref{cor_trans_cw}, our task is to prove that $\mathcal{K}_0$ also fulfills \eqref{eqn:Coifman-Weiss_condn_satisified_by_K}. To achieve this, we will demonstrate that the functions $\{\mathcal{K}_0(xa_s, t): x\in G \}$ can act as kernels for Euclidean pseudo-differential operators corresponding to a family of symbols $\{ a_x(s, \xi): x\in G \} $. These symbols will satisfy the conditions outlined in Theorem \ref{thm:pdo_for_family_of_symbols}. By doing so, we can employ Theorem \ref{thm:pdo_for_family_of_symbols} to conclude that $\mathcal{K}_0$ satisfies \eqref{eqn:Coifman-Weiss_condn_satisified_by_K}. In summary, our objective is to prove the following:
	\begin{equation}\label{eqn:condn_on_K_0}
		\left| (1+|\xi|)^{\alpha} \partial_s^\beta \partial_\xi^\alpha \int_{-\infty}^{\infty} e^{-2\pi i\xi t } \mathcal{K}_0(xa_s,t)dt \right| \leq C \left( \sup\limits_{x\in G}   \| \sigma_{{x}}\|_{\mathcal{H}{(\R,2,2)}} \right)    ,
	\end{equation}
	for all $x\in G$, $s,\xi \in \R$, and $\alpha, \beta \in \{0,1,2\}$, where the constant $ C $  is independent of $ x\in G $. Now for a given $x_0\in G$,   utilizing the Iwasawa decomposition and the fact that the abelian group $ A $ acts as a dilation on $ \ol N $, we can find  $k_0\in K, \overline{v}\in \overline{N}$, and $ s_0 \in \R$ such that $x_0 = k_0\overline{v} a_{s_0} $. Since by hypothesis of Theorem \ref{thm:Lp_boundedness_of_T_sigma_local}, $x\mapsto \sigma(x,\lambda)$ is a $K$-biinvariant function on $G$, we  observe that it suffices to prove \eqref{eqn:condn_on_K_0} for all $x=\overline{v}\in \overline{N}$.
	
 \textbf{Step 3: Estimate of the Kernel $\mathcal{K}_0$.}  We recall the definition of $ \mathcal{K}_0 $ from \eqref{eqn:separation_of_kernel}:
	\begin{equation}
		\mathcal{K}_0(\overline{v}a_s,t)  =  C \eta^\circ(a_t )\, \Delta(t) \,  \left( \frac{t}{\Delta(t)}\right)^{1/2} \int_{0}^\infty \varPhi(\lambda)  \sigma(\overline{v}a_s,\lambda ) \mathcal{J}_{0} (\lambda t) |c^{n,n}_{\tau}(\lambda)|^{-2} \,d\lambda,
	\end{equation}
  where we have the following formula from \cite[Eq. 10.9.12, p.224]{NIST_2010} 
  \begin{align*}
		\mathcal{J}_0 (\lambda t) =\frac{2}{\pi} \int_{\lambda}^{\infty} (\xi^2-\lambda^2)^{-1/2} \sin \xi t \, d\xi.
	\end{align*}
	Let us define $$ \Sigma(x,\lambda) =  (\partial_\lambda \cdot (1/\lambda)) ^{(d-2)/2} \left(  \varPhi(\lambda) \sigma(x,\lambda) |c^{n,n}_{\tau}(\lambda)|^{-2}  \right),$$
 for all $x \in G$ and $\lambda \in \R$. Then using integration by parts we have,
	\begin{align*}
		\mathcal{K}_0(\overline{v} a_s, t) &=C \eta^\circ(a_t) \left[ \frac{\Delta(t)}{t} \right]^{\frac{1}{2}}\, t \int_{\R}  \Sigma(\overline{v}a_s,\lambda)  \mathcal{J}_0 (\lambda t) d\lambda \\
		& = C \eta^\circ(a_t) \left[ \frac{\Delta(t)}{t} \right]^{\frac{1}{2}}\, t \int_{\R}   \sin \xi t \int_0^\xi   \Sigma(\overline{v}a_s,\lambda)  (\xi^2-\lambda^2)^{-1/2}\, d\lambda \,d\xi \\
		& =  C \eta^\circ(a_t) \left[ \frac{\Delta(t)}{t} \right]^{\frac{1}{2}}\,  \int_{\R}   \cos \xi t  \frac{d}{d \xi }\int_0^\xi   \Sigma(\overline{v}a_s,\lambda)  (\xi^2-\lambda^2)^{-1/2} \,d\lambda\, d\xi.
	\end{align*}
	
	\noindent Let us define
	\begin{equation}\label{eqn:defn_of_g(z,mu)}
		\begin{aligned}
			g(x,\xi) &:= \int_{0}^\xi (\xi^2-\lambda^2)^{-1/2}   \Sigma(x,\lambda) \,d\lambda,\quad x\in G,  \xi\geq 0\\
			&= \frac{1}{2}   \int_{-1}^{1}  (1-\lambda^2)^{-1/2}  \Sigma(x,\lambda \xi)\, d\lambda  \quad x\in G,  \quad \text{when }\xi>0,
		\end{aligned}
	\end{equation}
	and 
	\begin{eqnarray*}
		h(x,\xi) = \left( \frac{\partial}{\partial\xi} g \right) (x,|\xi|) , \quad \xi \in \R.
	\end{eqnarray*}
	We obtain
	\begin{align*}
		\mathcal{K}_0(\overline{v}a_s,t)&=  -C \eta^\circ(a_t) \left[ \frac{\Delta(t)}{t} \right]^{1/2}\,  \int_0^\infty     \frac{d}{d\xi} \, g (\overline{v}a_s,\xi) \cos \xi t  \, d\xi\\
		&=   -C \eta^\circ(a_t)\left[ \frac{\Delta(t)}{t} \right]^{1/2}\,  \int_{-\infty}^\infty   h(\overline{v}a_s,\xi) e^{i\xi t} d\xi.
	\end{align*}
	Let $ \nu(t) =-{C \eta^\circ(a_t)} \left[ \frac{\Delta(t)}{t} \right]^{1/2}, \,  t \in \R$. Then \be\label{eqn:ftK0}
	\mathcal{F}(\mathcal{K}_0(\overline{v}a_s,\cdot) )(\xi) =\left( \mathcal{F}(\nu) \ast_{\R} h(\overline{v}a_s,\cdot)\right)(\xi).
	\ee
	First, we claim that  \begin{equation*}\label{eqn:est_required_for_h}
		\sup_{\xi \in \R} \left( \left| \partial_s^\beta h(\overline{v}a_s, \xi)\right| +\left|  (1+\xi)\partial_s^\beta \partial_\xi h(\overline{v}a_s, \xi)\right|\right) \leq C \left( \sup\limits_{\overline{v}\in \overline{N}}   \| \sigma_{\overline{v}}\|_{\mathcal{H}{(\R,2,2)}} \right) \quad \text{ for all } \beta \in \{0,1,2\},
	\end{equation*}
	where the constant $ C$ is independent of $\overline{v} \in \overline{N} $.  From  \eqref{eqn:defn_of_g(z,mu)} we have
	\begin{align*}
		h(\overline{v}a_s, \xi) = \frac{1}{2}   \int_{-1}^{1}  (1-\lambda^2)^{-1/2} \lambda \frac{\partial}{\partial (\lambda \xi)} \Sigma(\overline{v},\lambda \xi)\, d\lambda \quad \text{ for $ \xi >0 $}.
	\end{align*} 
	Then using  Lemma \ref{lemma:est_of_|c(-lambda)|^-2} and the hypothesis in \eqref{hyp_sigma_local} we get
	\begin{equation*}
		\left| \frac{\partial^\beta}{\partial s^\beta}\frac{\partial^\alpha}{\partial \lambda^\alpha}  \Sigma(\overline{v}a_s,\lambda)\right|  \leq C  \left( \sup\limits_{\overline{v}\in \overline{N}}   \| \sigma_{\overline{v}}\|_{\mathcal{H}{(\R,2,2)}} \right) (1+|\lambda|)^{1-\alpha} \quad  \text{for all } \alpha,\beta\in \{0,1,2\}.
	\end{equation*}
	Hence \begin{align*}
		|\frac{\partial^\beta}{\partial s^\beta} h(\overline{v}a_s,\xi)| &\leq C \left( \sup\limits_{\overline{v}\in \overline{N}}   \| \sigma_{\overline{v}}\|_{\mathcal{H}{(\R,2,2)}} \right) \int_{0}^1 (1-\lambda^2)^{-1/2} \lambda \,d\lambda\\
		& \leq C\left( \sup\limits_{\overline{v}\in \overline{N}}   \| \sigma_{\overline{v}}\|_{\mathcal{H}{(\R,2,2)}} \right).
	\end{align*}
	Also,  \begin{align*}
		|(1+\xi)\frac{\partial^\beta}{\partial s^\beta}  \frac{\partial}{\partial \xi}  h(\overline{v}a_s,\xi)| &\leq C \left( \sup\limits_{\overline{v}\in \overline{N}}   \| \sigma_{\overline{v}}\|_{\mathcal{H}{(\R,2,2)}} \right)  (1+|\xi|) \int_{0}^1 (1-\lambda^2)^{-1/2} \frac{\lambda^2}{(1+|\lambda \xi| )}  d\lambda \\
		&\leq C \left( \sup\limits_{\overline{v}\in \overline{N}}   \| \sigma_{\overline{v}}\|_{\mathcal{H}{(\R,2,2)}} \right) \left( \int_{0}^1 (1-\lambda^2)^{-1/2}  {\lambda^2}  d\lambda \right. \\& \hspace{6cm}\left. +  |\xi| \int_{0}^1 (1-\lambda^2)^{-1/2} \frac{\lambda}{| \xi| }  d\lambda \right) \\
		&\leq C\left( \sup\limits_{\overline{v}\in \overline{N}}   \| \sigma_{\overline{v}}\|_{\mathcal{H}{(\R,2,2)}} \right).
	\end{align*}
	Similarly, we can prove that 
	\bes
	\left|  (1+\xi)^\alpha \partial_s^\beta \partial_\xi^\alpha h(\overline{v}a_s, \xi)\right|\leq C \left( \sup\limits_{\overline{v}\in \overline{N}}   \| \sigma_{\overline{v}}\|_{\mathcal{H}{(\R,2,2)}} \right) \, \quad  \text{for all } \alpha, \beta \in \{0,1,2\}.
	\ees
	  By applying the inequality above and taking into account that $\mathcal{F}(\nu)$ is a Schwartz function, we can deduce from (\ref{eqn:ftK0}) that $\mathcal{K}_0$ satisfies (\ref{eqn:condn_on_K_0}). Consequently,  This also concludes the proof of our Theorem \ref{thm:Lp_boundedness_of_T_sigma_local}.
\section{Large-scale analysis of  $\Psi_{\sigma}$}\label{sec_glo_par}
In the previous section, we observed how the kernel $ \mathcal{K}^{\text{loc}} $ exhibits similar behavior to the kernel of an Euclidean pseudo-differential operator. By using the transference method, we successfully obtained a bound for the $L^p$ operator norm of the local part of $\Psi_{\sigma}$. However, the analysis of the global part undergoes significant changes due to the exponential volume growth of the group $G$ and the entirely different local and global behavior of $\phi_{\tau,\lambda}^{n,n}$.

In multiplier theory, the authors in \cite{Ricci} utilized the Herz majorizing principle theorem for the convolution operator. For cases involving noncompact type symmetric spaces, Ionescu \cite{Ionescu_2002} further employed the same principle to estimate the $ L^p $ norm of the multiplier operator.  In this section, we will establish the $L^p$ bound of $\Psi_{\sigma}^{\text{glo}}$. To achieve this, we will use the expansion of spherical functions $\phi_{\tau,\lambda}^{n,n}$ away from the identity, and we will see that the global analysis of $\Psi_{\sigma}$ has no Euclidean analogue. Our approach in this section follows the general outline of \cite{Ionescu_2000}. Finally, Theorem \ref{thm:pdo_for_n,n_type} will be completed as a consequence of the following theorem. Before stating the main result in this section, let us recall the definition \eqref{eqn:defn_of_T_sigma_global} of $\Psi^{\text{glo}}_{\sigma}$,
 \begin{equation}\label{defn_Psi_sigma_rep}
	  \Psi^{ \text{glo}}_{\sigma}f(x) =\int_{G} f(y) \mathcal{K}^{\text{glo}}(x,y^{-1}x) \, dy , \quad x\in G,
\end{equation}
where 
\begin{equation}\label{Kglo_re}
     \mathcal{K}^{\text{glo}}(x,y^{-1}x) = \eta (y^{-1}x) \int_{\R} \sigma (x,\lambda) \phi_{\tau,\lambda}^{n,n}(y^{-1}x ) |c^{n,n}_{\tau}(\lambda)|^{-2} d\lambda.
\end{equation}
We will complete the proof of Theorem  \ref{thm:pdo_for_n,n_type}  by establishing the following theorem.
 \begin{theorem}\label{thm:Lp_boundedness_of_T_sigma_global}
	Let $p\in [1,2)\cup (2,\infty)$. Suppose that $\sigma:G\times S_{p}^{\circ}\rightarrow\mathbb{C}$ is a function satisfying the following properties:
\begin{enumerate}
\item[(i)] For each $\lambda \in S_p^{\circ}$, $x\mapsto\sigma(x,\lambda)$ is a $K$-biinvariant function on $G$.
\item[(ii)] For each $x$ in $G$,  $ \lambda \mapsto \sigma(x,\lambda)$ is an even holomorphic function on $ S_p^\circ$ and 
\begin{equation} \label{sigma_glo_con}
     \sup_{x\in G}\| \sigma(x,\cdot)\|_{\mathcal{MH}(S_p,2)}<\infty.
\end{equation}
\item[(iii)] Additionally, for $p=1$ and $n\in \Z^{\tau_+}\setminus\{ 0\}$,  $\frac{\partial^\alpha}{\partial \lambda^{\alpha}}\sigma(x,\lambda)|_{\lambda=i}=0,$  for all $\alpha\in \{0,1,2\}$.
\end{enumerate}
Then the operator $\Psi^{ \text{glo}}_{\sigma}$ defined as in \eqref{defn_Psi_sigma_rep}, is a bounded operator from $L^{p}(G)_{n,n}$ to itself. Moreover, there exists a constant $C_{p,n}>0$ such that
$$\| \Psi^{ \text{glo}}_{\sigma}f\|_{L^{p}(G)}\leq C_{p,n} \left( \sup_{x\in G}\| \sigma(x,\cdot)\|_{\mathcal{MH}(S_p,2)} \right) \|f\|_{L^p(G)}, $$
\end{theorem}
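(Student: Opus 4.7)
The strategy is to majorize $|\mathcal{K}^{\text{glo}}(x,y)|$ uniformly in $x$ by a $K$-biinvariant function $F(y)$, and then reduce the operator bound to a convolution estimate of Cowling--Kunze-Stein type. The kernel estimate follows the Ionescu template: insert the global expansion of $\phi^{n,n}_{\tau,\lambda}$ (Lemma~\ref{propn:global_expansion_of_phi_lambda}), shift the contour of integration into $S_p^{\circ}$ using the holomorphicity of $\sigma(x,\cdot)$, and integrate by parts twice in $\lambda$ to produce polynomial decay in $y^{+}$.

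Since $\eta$ is supported on $\{y^{+}\geq 1/10\}$, Lemma~\ref{propn:global_expansion_of_phi_lambda} applies. Exploiting the evenness of $\sigma(x,\cdot)$ and a change $\lambda\mapsto -\lambda$ in one of the two expansion summands, I combine them into the single oscillatory integral
\begin{equation*}
\mathcal{K}^{\text{glo}}(x,y) = 2\,\eta(y)\,e^{-y^{+}}\int_{\R}\sigma(x,\lambda)\,e^{i\lambda y^{+}}\bigl(1+a(\lambda,y^{+})\bigr)\,c^{n,n}_{\tau}(-\lambda)^{-1}\,d\lambda.
\end{equation*}
I then shift the contour from $\R$ to $\R+i\gamma_{p}$; the rapid decrease provided by the standing regularization $e^{-\epsilon\lambda^{2}}$ justifies this, and the shift upward produces the overall exponential decay $e^{-(1+\gamma_{p})y^{+}}$. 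For $p>1$, the shift is pole-free: Lemma~\ref{c^n,n-}(ii) places the poles of $c^{n,n}_{\tau}(-\cdot)^{-1}$ in $S_{p}^{\circ}$ at $\lambda=ik$ with $k\in\Z^{-\tau}$ and $|k|\geq 1>\gamma_{p}$. At the endpoint $p=1$ with $n\in\Z^{\tau_{+}}\setminus\{0\}$, the simple pole at $\lambda=i$ meets the shifted contour, but hypothesis (iii)---the vanishing of $\partial_{\lambda}^{\alpha}\sigma(x,\cdot)|_{\lambda=i}$ for $\alpha\in\{0,1,2\}$---forces $\sigma(x,\lambda)c^{n,n}_{\tau}(-\lambda)^{-1}$ to vanish to order $2$ at $\lambda=i$, so the full integrand is holomorphic there and the shift is permissible.

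After the shift, two integrations by parts in $\lambda$ contribute a factor $(1+y^{+})^{-2}$. The remaining $\lambda$-integrand is controlled by the hypothesis $\sup_x \|\sigma(x,\cdot)\|_{\mathcal{MH}(S_p,2)}<\infty$ together with $|\partial_{\lambda}^{\alpha}c^{n,n}_{\tau}(-\lambda)^{-1}|\leq C(1+|\lambda|)^{1/2-\alpha}$ from Lemma~\ref{c^n,n-}(iii) and $|\partial_{\lambda}^{\alpha}a(\lambda,t)|\leq C(1+|\lambda|)^{-\alpha}$ from Lemma~\ref{propn:global_expansion_of_phi_lambda}; the integrand becomes $O((1+|\lambda|)^{-3/2})$, integrable in $\lambda$. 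Thus
\begin{equation*}
|\mathcal{K}^{\text{glo}}(x,y)|\leq C\,\chi_{\{y^{+}\geq 1/10\}}\,e^{-(1+\gamma_{p})y^{+}}(1+y^{+})^{-2}\,\sup_{x'\in G}\|\sigma(x',\cdot)\|_{\mathcal{MH}(S_p,2)},
\end{equation*}
uniformly in $x\in G$. Denoting the $x$-independent factor by $F(y)$, the integration formula~\eqref{definition of integration} and the critical balance $(1+\gamma_{q})q=2$ valid for every $q\in[1,2]$ give $\|F\|_{L^{q}(G)}^{q}\simeq\int_{1/10}^{\infty}(1+t)^{-2q}\,dt<\infty$ for all $q\in[1,2]$.

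Finally, the change of variable $z=y^{-1}x$ combined with the fact $F(z)=F(z^{-1})$ (since $F$ is $K$-biinvariant) gives $|\Psi^{\text{glo}}_{\sigma}f(x)|\leq C(|f|\ast F)(x)$. For $p\in[1,2)$, Cowling's form of the Kunze-Stein phenomenon $L^{p}(G)\ast L^{p}(G)\subset L^{p}(G)$ (with Young's inequality covering $p=1$) yields $\|\Psi^{\text{glo}}_{\sigma}f\|_{L^{p}}\leq C_{p,n}\|F\|_{L^{p}}\|f\|_{L^{p}}$. For $p\in(2,\infty)$, the adjoint of convolution by $F$ on $L^{p}$ is the same convolution on $L^{p'}$ with $p'\in(1,2)$; since $F\in L^{p'}$ by the computation above, the same Cowling estimate applied to the adjoint together with duality yields the required bound on $L^{p}$. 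The principal obstacle is the rigorous justification of the contour shift through $\lambda=i$ at $p=1$ for $n\in\Z^{\tau_{+}}\setminus\{0\}$: hypothesis (iii) is precisely tailored to remove the simple pole of $c^{n,n}_{\tau}(-\lambda)^{-1}$ and make the integrand holomorphic across the new contour, possibly via a limiting argument along a contour slightly below $\R+i\gamma_{1}$ together with Cauchy's theorem.
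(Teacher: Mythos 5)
Your kernel estimate is essentially the same as the paper's: expand $\phi^{n,n}_{\tau,\lambda}$ away from the identity, combine the two Harish-Chandra terms by symmetry of $\sigma$, shift the contour into $S_p$, and integrate by parts twice. The paper shifts by $\gamma_p(1-1/[y^{-1}x]^+)$, staying strictly inside the strip and paying only a harmless constant $e^{\gamma_p}$, whereas you shift all the way to $\Im\lambda=\gamma_p$; since the $\mathcal{MH}(S_p,2)$-hypothesis carries continuity and derivative bounds up to $\partial S_p$ (and, for $p=1$ with $n\in\Z^{\tau_+}\setminus\{0\}$, hypothesis (iii) cancels the simple pole of $c^{n,n}_\tau(-\cdot)^{-1}$ at $\lambda=i$, yielding the paper's estimate \eqref{est_v_cnn_1} as the precise substitute for \eqref{est: cminusla}), the two routes to \eqref{est_of_K_glo}--\eqref{est_of_K_glo_p>2} are interchangeable.

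The genuine divergence is in the final step. The paper does \emph{not} reduce to a convolution bound. It passes to $\langle\Psi^{\text{glo}}_{\sigma}f,\varphi\rangle$, decomposes into the operators $\mathcal{I}^{\pm}_{\sigma}$ on $\overline{N}\ltimes A$, inserts the Iwasawa change of variables, and then combines Lemma~\ref{lemma:expression_of_v-a _r} with the evenness of the Abel transform \eqref{Abel_tran} and H\"older's inequality to close the estimate directly; this is a self-contained, Ionescu-style argument. You instead observe that for $(n,n)$-type $f$ the function $|f|$ is $K$-biinvariant, so the uniform-in-$x$ majorization $|\mathcal{K}^{\text{glo}}(x,y^{-1}x)|\leq F(y^{-1}x)$ with a $K$-biinvariant $F$ dominates $\Psi^{\text{glo}}_{\sigma}$ by the convolution $|f|\ast F$ between $K$-biinvariant functions, which the paper's own introduction (in discussing \cite{Ricci}) flags as the well-trodden path for \emph{multipliers}. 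This is a valid and shorter route, and the observation that taking absolute values collapses the pseudo-differential dependence on the space variable into a convolution is a nice shortcut that the paper chooses not to exploit.

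Two remarks of caution. First, the label ``Cowling's form of the Kunze-Stein phenomenon $L^{p}\ast L^{p}\subset L^{p}$'' is nonstandard and, stated for arbitrary $f,g\in L^{p}(G)$, is not the Kunze-Stein theorem (which gives $L^{p}\ast L^{2}\subset L^{2}$ for $1\leq p<2$). What actually closes your argument is Herz's majorizing principle (in the Cowling--Herz form): for $K$-biinvariant $h$, $\|f\ast h\|_{p}\lesssim\kappa_{p}(h)\|f\|_{p}$ with $\kappa_{p}(h)=\int_{G}|h|\,\phi_{i\gamma_p}$. A one-line H\"older application then shows $\kappa_{p}(h)\lesssim\|h\|_{L^{p}(G//K)}$ when $1\leq p<2$, which is the contraction you need; the duality step for $p>2$ is then standard since $F(x)=F(x^{-1})$ for radial $F$. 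You should cite Herz rather than ``Kunze-Stein,'' and you should make explicit that the convolution inequality is used only for $K$-biinvariant inputs. Second, the line $\|F\|_{L^q(G)}^{q}\simeq\int_{1/10}^{\infty}(1+t)^{-2q}\,dt$ ``for all $q\in[1,2]$'' is only correct at $q=p$ where the exponential in $F$ is $e^{-(1+\gamma_p)t}$; the claimed balance $(1+\gamma_q)q=2$ does not match the exponent carried by $F$ unless $q=p$. In fact you only need $F\in L^{p}$ for $p<2$ (and $F\in L^{p'}$ for $p>2$), which does hold, so the conclusion survives, but the sweeping claim over all $q$ should be removed.
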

for all $ f\in L^{p}(G)_{n,n} $. 
\begin{remark}\label{rem_glo}
\begin{enumerate}
\item  We would like to mention that for $p=1$ case when $n=0$ or $n \in \Z^{\tau_-}$, we obtain an improvement on the $L^1$ case. More precisely, for $n=0$ or $n \in \Z^{\tau_-}$, it suffices to assume conditions (i) and (ii) of the theorem above to establish the $L^1$-boundedness of $\Psi_\sigma^{\text{glo}}$.  
 
 \item  When examining the proof of the boundedness of $\Psi_\sigma^{\text{glo}}$ and $\Psi_\sigma^{\text{dis}}$, it becomes apparent that these operators do not rely on any regularity conditions concerning the space variable of the symbol $\sigma$. The requirement for a derivative condition on the space variable is solely necessary to establish the boundedness of the local part $\Psi_\sigma^{\text{loc}}$.
In fact, to address the boundedness of $\Psi_\sigma^{\text{loc}}$, we needed $\sup\limits_{\overline{v}\in \overline{N}} \| \sigma_{\overline{v}}\|_{\mathcal{H}{(\R,2,2)}} <\infty$ (see \eqref{result_loc}). Hence, the regularity condition on the space variable of the symbol $\sigma(x,\lambda)$ is essential, but only when $\lambda$ is along the real line, not for the whole strip $S_p$.
 \end{enumerate}
\end{remark}

\subsection{{Estimate of the global kernel $\mathcal{K}^{\text{glo}}$.}}
As in the Euclidean setting, the estimate of $\mathcal{K}^{\text{glo}}$ plays a crucial role in establishing the boundedness of the pseudo-differential operator. However, unlike in Euclidean space, we will observe the exponential decay instead of polynomial decay in our estimates. This exponential decay is essential to handle the exponential volume growth of the group $\mathrm{SL}(2,\R)$. In the following lemma, we will explore how the holomorphicity of the symbol is responsible for the decay property mentioned above.

\begin{lemma}  Suppose that $ \sigma $ satisfies the hypothesis  of Theorem \ref{thm:Lp_boundedness_of_T_sigma_global} and   let $\mathcal{K}^{\text{glo}}$ be as in \eqref{Kglo_re}. Then the kernel  $\mathcal{K}^{\text{glo}}$ satisfies the following estimates
   \begin{enumerate} 
   \item  For $1\leq p<2 $,  one has
          \begin{equation}\label{est_of_K_glo}
       |\mathcal{K}^{\text{glo}}(x,y^{-1}x)|\leq C_{p,n} \left( \sup_{x\in G}\| \sigma(x,\cdot)\|_{\mathcal{MH}(S_p,2)} \right)
              \frac{\eta (y^{-1}x)}{ {(1+[y^{-1}x]^+)}^{2}} e^{-\frac{2}{p}( \,[y^{-1}x]^+)}.
    \end{equation}
    \item For $2<p<\infty$, one has
    \begin{equation}\label{est_of_K_glo_p>2}
       |\mathcal{K}^{\text{glo}}(x,y^{-1}x)|\leq C_{p,n} \left( \sup_{x\in G}\| \sigma(x,\cdot)\|_{\mathcal{MH}(S_p,2)} \right)   \, \eta (y^{-1}x) \, \frac{e^{-\frac{2}{p'}( \,[y^{-1}x]^+)}}{ {(1+[y^{-1}x]^+)}^{2}},
    \end{equation}
    \end{enumerate}
    where  $C_{p,n}>0$ is a constant depending only on the symbol and $n\in \Z^{\sigma}$. 
\end{lemma}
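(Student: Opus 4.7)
The plan is to exploit the global expansion of $\phi_{\tau,\lambda}^{n,n}$ from Lemma~\ref{propn:global_expansion_of_phi_lambda} combined with a contour shift into the strip $S_p^{\circ}$ where $\sigma(x,\cdot)$ is holomorphic. First, using the Cartan decomposition $y^{-1}x = k_1 a_t k_2$ with $t=[y^{-1}x]^+$ and the fact that the $K$-factors in $\phi_{\tau,\lambda}^{n,n}(y^{-1}x)$ contribute only unit modulus phases, one reduces the pointwise estimate of $\mathcal{K}^{\text{glo}}$ to bounding
\[
I(x,t):=\int_{\R}\sigma(x,\lambda)\,\phi_{\tau,\lambda}^{n,n}(a_t)\,|c^{n,n}_{\tau}(\lambda)|^{-2}\,d\lambda
\]
in absolute value for $t\ge 1/2$. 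Since $\eta(y^{-1}x)\ne 0$ forces $t\ge 1/2\ge 1/10$, the expansion of Lemma~\ref{propn:global_expansion_of_phi_lambda} applies. Writing $|c^{n,n}_\tau(\lambda)|^{-2}=c^{n,n}_\tau(\lambda)^{-1}c^{n,n}_\tau(-\lambda)^{-1}$ on $\R$ and substituting $\lambda\mapsto-\lambda$ together with the evenness of $\sigma(x,\cdot)$, the two summands of the expansion coalesce into
\[
I(x,t)=2e^{-t}\int_{\R}\sigma(x,\lambda)\,\frac{e^{i\lambda t}(1+a(\lambda,t))}{c^{n,n}_\tau(-\lambda)}\,d\lambda.
\]

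Next, I shift the contour from $\R$ to $\R+i\gamma_p$; the temporary regularization $\sigma\mapsto \sigma\cdot e^{-\epsilon\lambda^2}$ from Section~\ref{kernel_CW} makes Cauchy's theorem applicable and kills the tails at $\pm\infty$. By Lemma~\ref{c^n,n-}(ii) the poles of $c^{n,n}_\tau(-\cdot)^{-1}$ in the upper half plane lie in $i\Z^{-\tau}$ with $0<\Im\lambda<|n|$. For $p\in(1,2)\cup(2,\infty)$ one has $\gamma_p<1$, so no pole lies in the strip $\{0<\Im\lambda\le\gamma_p\}$. For $p=1$ with $n=0$ or $n\in\Z^{\tau_-}$, Lemma~\ref{c^n,n-}(iv) ensures $c^{n,n}_\tau(-\cdot)^{-1}$ is regular at $\lambda=i$. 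In the remaining case $p=1$ and $n\in\Z^{\tau_+}\setminus\{0\}$, $c^{n,n}_\tau(-\lambda)^{-1}$ has a simple pole at $i$, but hypothesis~(iii) forces $\sigma(x,\cdot)$ to vanish to order three at $i$, so the product $\sigma(x,\lambda)/c^{n,n}_\tau(-\lambda)$ extends holomorphically across $i$. In every case the shift produces the factor $e^{-\gamma_p t}$, and $|I(x,t)|$ acquires the prefactor $e^{-(1+\gamma_p)t}$.

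After the shift I integrate by parts twice in $\xi=\Re\lambda$, using $e^{i\xi t}=(it)^{-2}\partial_\xi^2 e^{i\xi t}$, to produce the sought $t^{-2}$ factor. The integrand is then estimated via the Leibniz rule together with $|\partial_\xi^\alpha \sigma(x,\xi+i\gamma_p)|\le \|\sigma(x,\cdot)\|_{\mathcal{MH}(S_p,2)}(1+|\xi|)^{-\alpha}$ from \eqref{sigma_glo_con}, $|\partial_\xi^\alpha c^{n,n}_\tau(-\lambda)^{-1}|\le C(1+|\xi|)^{1/2-\alpha}$ from Lemma~\ref{c^n,n-}(iii)--(iv), and $|\partial_\xi^\alpha a(\xi+i\gamma_p,t)|\le C_n (1+|\xi|)^{-\alpha}$ from Lemma~\ref{propn:global_expansion_of_phi_lambda}, for $\alpha\in\{0,1,2\}$. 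Each term produced by the product rule is then $O((1+|\xi|)^{-3/2})$, hence integrable, so the $\xi$-integral is bounded by a constant multiple of $\|\sigma(x,\cdot)\|_{\mathcal{MH}(S_p,2)}$. Combining the $e^{-(1+\gamma_p)t}$ prefactor with the $t^{-2}$ factor (replaced by $(1+t)^{-2}$ on $t\ge 1/2$) and reinserting the cutoff $\eta(y^{-1}x)$ yields \eqref{est_of_K_glo} and \eqref{est_of_K_glo_p>2} in one stroke, since $1+\gamma_p=2/p$ for $p\in[1,2)$ and $1+\gamma_p=2/p'$ for $p\in(2,\infty)$.

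The main obstacle is the boundary case $p=1$, $n\in\Z^{\tau_+}\setminus\{0\}$, where the shifted contour $\R+i$ runs exactly through the pole of $c^{n,n}_\tau(-\cdot)^{-1}$ at $\lambda=i$. Hypothesis~(iii) is tailored to cancel this pole, but one must check that the holomorphic extension of $\sigma(x,\cdot)(1+a(\cdot,t))/c^{n,n}_\tau(-\cdot)$ across $i$ retains derivative bounds of the same order as Lemma~\ref{c^n,n-}(iv) provides away from $i$, so that the two integrations by parts still produce an integrable bound uniformly across the full shifted contour. A Taylor expansion of $\sigma(x,\lambda)$ around $\lambda=i$ of order three, absorbing the factor $(\lambda-i)^{-1}$ coming from $c^{n,n}_\tau(-\lambda)^{-1}$, together with the estimate of Lemma~\ref{c^n,n-}(iv) off a neighborhood of $i$, should complete the argument.
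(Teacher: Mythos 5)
Your proposal is correct and follows essentially the same route as the paper: global Harish-Chandra expansion of $\phi^{n,n}_{\tau,\lambda}$, folding the two summands via evenness of $\sigma$ and $\lambda\mapsto-\lambda$, a contour shift into the strip producing $e^{-\gamma_p t}$, two integrations by parts giving $(1+t)^{-2}$, and the estimates of Lemma~\ref{c^n,n-} and \eqref{eqn:est_for_a(lambda,t)} for the integrand, with hypothesis~(iii) handling the $p=1$, $n\in\Z^{\tau_+}\setminus\{0\}$ pole at $\lambda=i$. The only (cosmetic) difference is that you shift all the way to the boundary $\Im\lambda=\gamma_p$, whereas the paper shifts to an interior horizontal line $\Im\lambda=\gamma_p\bigl(1-O(1/t)\bigr)$ and absorbs the resulting $e^{O(1)}$ constant; both work, since the bounds on $c^{n,n}_\tau(-\cdot)^{-1}$ and $a(\cdot,t)$ extend to the closed strip.
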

\begin{proof}
To begin, we will prove the lemma for $1<p<2$. We observe from    \eqref{Kglo_re} that   the function $y\mapsto \mathcal{K}^{\text{glo}}(\cdot,y)$ is supported away from identity, as $\eta$ is a $K$-biinvariant function supported on the set $$\{ k_1 a_t k_2 : t\geq 1/2, k_1,k_2 \in K\}.$$  
This allows us to apply the Harish-Chandra series expansion of the spherical function $\phi^{n,n}_{\tau,\lambda}$ from Lemma \ref{propn:global_expansion_of_phi_lambda}. Substituting the explicit expression of $\phi^{n,n}_{\tau,\lambda}(y^{-1}x)$ from \eqref{eqn:phi-lambda-assymp-expansion} and using the fact that $ |c^{n,n}_{\tau}(\lambda)|^{2}= c^{n,n}_{\tau}(\lambda) c^{n,n}_{\tau}(-\lambda)$ for all $\lambda \in \R$, we obtain
\begin{align*}
     \mathcal{K}^{\text{glo}}(x,y^{-1}x) =&  \eta(y^{-1}x) e^{- \,[y^{-1}x]^+} \left(\int_{\R } \sigma (x,\lambda) (1+a(\lambda, [y^{-1}x]^+) ) c^{n,n}_{\tau}(-\lambda)^{-1}  e^{i\lambda([y^{-1}x]^+) } d\lambda \right.\\
     & \hspace{1cm}\left. + \int_{\R} \sigma (x,\lambda) (1+a(-\lambda, [y^{-1}x]^+)) c^{n,n}_{\tau}(\lambda)^{-1}  e^{i\lambda([y^{-1}x]^+) } d\lambda \right).    
\end{align*}
After the change of variable $\lambda\mapsto -\lambda$ in the second integral and using the fact that $\lambda \mapsto \sigma(x,\lambda) $ is an even function, we get
\begin{align*}
    \mathcal{K}^{\text{glo}}(x,y^{-1}x) = \, & C \eta(y^{-1}x) e^{- \,[y^{-1}x]^+} \int_{\R} \sigma (x,\lambda)  c^{n,n}_{\tau}(-\lambda)^{-1}  e^{i\lambda([y^{-1}x]^+) } d\lambda \\
    &+ C \eta(y^{-1}x) e^{- \,[y^{-1}x]^+} \int_{\R} \sigma (x,\lambda) a(\lambda, [y^{-1}x]^+)  c^{n,n}_{\tau}(-\lambda)^{-1}  e^{i\lambda([y^{-1}x]^+) } d\lambda \\
    =& \mathcal{K}^{\text{glo}}_0(x,y^{-1}x)+ \mathcal{K}^{\text{glo}}_{\text{error}}(x,y^{-1}x).
\end{align*}
First, we will demonstrate that  $\mathcal{K}^{\text{glo}}_0(x,y^{-1}x)$ satisfies the estimate \eqref{est_of_K_glo}. We observe that the above integrand is a holomorphic function on $S^{\circ}_{p}$. Therefore, by applying Cauchy's integral theorem, we move the integration with respect to $\lambda$ from $\R$ to $\R -i(\gamma_p- \gamma_p/2[y^{-1}x]^+) $, obtaining
\begin{align}\label{K_glo_0}
    \mathcal{K}^{\text{glo}}_0(x,y^{-1}x)&= C \, \eta(y^{-1}x) e^{- 2/p( \,[y^{-1}x]^+)}  \int_{\R} {\nu (x,\lambda+i(\gamma_p- \gamma_p/[y^{-1}x]^+) )} e^{i\lambda([y^{-1}x]^+) } d\lambda, 
\end{align}
where $\nu (x,\lambda) $ is defined as follows \[ \nu (x,\lambda) = {\sigma (x,\lambda) )}{c^{n,n}_{\tau}  (-\lambda)^{-1}}  .\]
 Consequently, integrating by parts the inner integral of \eqref{K_glo_0}, we deduce that
\begin{align*}
    \mathcal{K}^{\text{glo}}_0(x,y^{-1}x)& =C \, \eta(y^{-1}x)\frac{e^{-2/p( \,[y^{-1}x]^+)}}{([y^{-1}x]^+)^2}  \int_{\R}  \frac{\partial^2}{\partial \lambda^2}{\nu (x,\lambda+i(\gamma_p- \gamma_p/[y^{-1}x]^+) )} e^{i\lambda([y^{-1}x]^+) } d\lambda.
\end{align*}
Taking modulus on both sides, using hypothesis \eqref{sigma_glo_con}, and the estimate \eqref{est: cminusla} of $c^{n,n}_{\tau}(-\lambda)^{-1}$ we get,
\begin{align*}
    \left| \mathcal{K}^{\text{glo}}_0(x,y^{-1}x)\right|& =C_{p,n} \left( \sup_{x\in G}\| \sigma(x,\cdot)\|_{\mathcal{MH}(S_p,2)} \right)   \, \eta(y^{-1}x)\frac{e^{-2/p( \,[y^{-1}x]^+)}}{([y^{-1}x]^+)^2}  \int_{\R} \frac{d\lambda}{(1+|\lambda|)^{3/2}}\\
    &\leq C_{p,n} \left( \sup_{x\in G}\| \sigma(x,\cdot)\|_{\mathcal{MH}(S_p,2)} \right)   \, \eta(y^{-1}x)\frac{e^{-2/p( \,[y^{-1}x]^+)}}{([y^{-1}x]^+)^2}.
\end{align*}
Next, we shall estimate $\mathcal{K}^{\text{glo}}_{\text{error}}(x,y)$.  Proceeding in a similar way as in the proof of $ \mathcal{K}^{\text{glo}}_0$, we get
\begin{multline*}
    \mathcal{K}^{\text{glo}}_{\text{error}}(x,y^{-1}x) =C \, \eta(y^{-1}x)\frac{e^{-2/p( \,[y^{-1}x]^+)}}{([y^{-1}x]^+)^2}  \int_{\R}  \frac{\partial^2}{\partial \lambda^2}\left({\nu  (x,\lambda+i(\gamma_p- \gamma_p/[y^{-1}x]^+) )} \right.\\
     \left. \cdot\, a(\lambda+i(\gamma_p- \gamma_p/[y^{-1}x]^+), [y^{-1}x]^+) \right)e^{i\lambda([y^{-1}x]^+) } d\lambda.
\end{multline*}
We recall from \eqref{eqn:est_for_a(lambda,t)} that the function $ \lambda \mapsto a(\lambda,t) $ satisfies a favorable symbol-type estimate for $t\geq 1/10$. Moreover, by the Leibniz rule, one has  
\begin{equation}\label{eqn:est_of_sigma*a*c^{-1}}
		 \left| \frac{\partial^2}{\partial \lambda^2} \left( \frac{\sigma(x,\lambda)\, a(\lambda, t)}{c^{n,n}_{\tau}(-\lambda)}\right)\right| \leq \frac{C_n}{(1+|\lambda|)^{3/2}  }
	\end{equation}
for all $ \lambda \in \C$  with $ 0 \leq \Im \lambda \leq \gamma_p$, $ t\geq 1/10 $ and $ x\in G  $. By  \eqref{eqn:est_of_sigma*a*c^{-1}} in the expression of $\mathcal{K}^{\text{glo}}_{\text{error}}$, we obtain the required estimate of $\mathcal{K}^{\text{glo}}_{\text{error}}(x,y^{-1}x)$. This settles the lemma for $1<p<2$. 

Now we handle the $p=1$ case. We recall from Lemma \ref{c^n,n-}, for $n\in \Z^{\tau_+} \setminus \{0\}$, the function $\lambda \rightarrow c^{n,n}_{\tau}(-\lambda)^{-1} $ has a simple pole at $\lambda =i$ within the boundary of $ S_1$. This prevents us from directly using the estimate \eqref{est: cminusla} of $c^{n,n}_{\tau}(-\lambda)^{-1}$ near $\lambda=i$.  To overcome this obstacle, we will utilize the hypothesis on the symbol $\sigma(x,\lambda)$, which has a zero at $\lambda =i$. Exploiting this and the Leibniz rule, we  obtain  that
\begin{equation}\label{est_v_cnn_1}
		 \left| \frac{\partial^2}{\partial \lambda^2} \left( \frac{\sigma(x,\lambda)\, }{c^{n,n}_{\tau}(-\lambda)}\right)\right| \leq \frac{C_n}{(1+|\lambda|)^{3/2}  }.
	\end{equation}
With this inequality in hand and repeating the previous argument, we can complete the proof of \eqref{est_of_K_glo}. When $n\in \Z^{\tau_-}$, $\lambda \rightarrow c^{n,n}_{\tau}(-\lambda)^{-1} $ does not have any pole in $S_1$, so the proof follows similarly as in $1<p<2$ case, this also concludes the proof for $p=1$.

  Next, we address the case $2<p<\infty$, which follows a similar approach. We note that $\gamma_p=|(2/p-1)|=(2/p'-1)$ for all $p>2$, where  $1/{p'} =1-1/p$. Applying the same argument as before, we obtain \eqref{K_glo_0} with $e^{-2/p([y^{-1}x]^+)}$ replaced by $e^{-2/p'([y^{-1}x]^+)}$. Then, proceeding analogously to the previous case, we can establish \eqref{est_of_K_glo_p>2}.
\end{proof}
We now employ the lemma above to establish the $L^p$-boundedness of  $\Psi^{\text{glo}}_{\sigma}$, and, consequently, complete the proof of Theorem \ref{thm:pdo_for_n,n_type}.  Before that, we express $\Psi^{\text{glo}}_{\sigma}$ as a sum of two integral operators on the $\overline{N} A$ group. We then proceed to find their $L^p$ norm estimates.

Let $f$ and $\varphi$ be two smooth compactly supported $(n,n)$ type functions on the group $G$. Using the Iwasawa integration formula \eqref{eqn:integral-decom-nbar-a}, we can write from \eqref{defn_Psi_sigma_rep} the expression for $\Psi^{\text{glo}}_{\sigma}$  with $y =\overline{m}a_s k_1$ and $x =\overline{n}a_t k_2$ as follows:

\begin{equation}
\begin{aligned} 
\left \langle  \Psi^{ \text{glo}}_{\sigma}f ,{ \varphi}\right\rangle &= \int_G \Psi^{ \text{glo}}_{\sigma}f(x)  \overline{\varphi(x)} dx \\   &=\int_{\ol N} \int_{\ol N} \int_{\R }\int_{ \R} f(\overline{m}a_s)\ol {\varphi(\overline{n}a_t)} \mathcal{K}^{\text{glo}}(\overline{n}a_t,a_{-s}\overline{m}^{-1}\overline{n} a_{t}) e^{2(s+t)} ds\, d\ol m\, dt\,  d\ol{n}\\
&=   \int_{\ol N} \int_{\ol N} \int_{\R }\int_{ \R} f(\overline{m}a_s) \ol {\varphi(\overline{n}a_t) }\mathcal{K}^{\text{glo}}(\overline{n}a_t,\delta_{-s} (\overline{m}^{-1}\overline{n}) a_{t-s}) e^{2(s+t)} ds\, d\ol m\, dt\,  d\ol{n},
\end{aligned}
\end{equation}
where we recall that  $ \delta_{-s} $ is the dilation on the group $\ol{N}$ defined in \eqref{dil_delta}.

We observe that the integrations over the $k_2$ variable are canceled due to the right $K$-invariance of the symbol $\sigma$ with respect to the space variable. If we assume that $x\mapsto \sigma(x,\lambda)$ is any other right $l$ ($\not =0$)-type, then the above integral will be always zero.

Now, let $\chi^+$ and $\chi^-$ be the characteristic functions of the intervals $[0,\infty)$ and $(-\infty, 0)$, respectively. We can now express the above operator as follows:
\begin{align*} 
	 \left \langle  \Psi^{ \text{glo}}_{\sigma}f ,{ \varphi }\right\rangle &=  \left \langle \mathcal{I}^{-}_{\sigma} f ,{ \varphi }\right\rangle +  \left \langle  \mathcal{I}^{+}_{\sigma} f ,{ \varphi }\right\rangle   \quad \text{(say)},
\end{align*}
where  $ \mathcal{I}^{\pm}_{\sigma}f$ are the operators defined on $\ol {N}A $ group by the following formulae
\begin{equation}\label{Ipm_sigma}
       \mathcal{I}^{\pm}_{\sigma}f(\ol n a_t)   := \int_{\overline{N}} \int_{\mathbb{R}}  f(\overline{m}a_s)  \mathcal{K}^{\text{glo}}(\overline{n}a_t,\delta_{-s} (\overline{m}^{-1}\overline{n}) a_{t-s}) \chi^{\pm}(t-s) e^{2s}\, ds\, d{\overline{m}},
\end{equation}
and 
\begin{equation}\label{Ipm_sigma_prod}
       \left \langle \mathcal{I}^{\pm}_{\sigma} f ,{ \varphi}\right\rangle   = \int_{\overline{N}} \int_{\mathbb{R}}  \int_{\overline{N}} \int_{\mathbb{R}}  f(\overline{m}a_s) \ol {\varphi(\overline{n}a_t)}   \mathcal{K}^{\text{glo}}(\overline{n}a_t,\delta_{-s} (\overline{m}^{-1}\overline{n}) a_{t-s}) \chi^{\pm}(t-s) e^{2(s+t)}\, ds\, d{\overline{m}}\, dt\,  d\ol{n}.
\end{equation}
Consequently, the $L^{p}(G)_{n,n}$-boundedness of $\Psi^{ \text{glo}}_{\sigma}$ follows from that of the operators $ \mathcal{I}^{\pm}_{\sigma}$ on $L^p(\ol{N}{A})$, which we shall take up separately in the rest of this section.
 
\subsection{$ L^p$ operator norm estimates of  $ \mathcal{I}^{\pm}_{\sigma}$}
 We first shall establish the $L^p$-boundedness of the operator $\mathcal{I}^{-}_{\sigma}$  by proving the following lemma:
\begin{lemma}\label{lem_est_of_I-}
		Let $p\in [1,2) \cup (2,\infty)$. Suppose that $ \sigma $ satisfies the hypothesis  of Theorem \ref{thm:Lp_boundedness_of_T_sigma_global} and $\mathcal{I}^{-}_{\sigma}$ be as in \eqref{Ipm_sigma}.  Then we have $\mathcal{I}^{-}_{\sigma}$   is bounded from $L^{p}(\ol{N}A)$ to itself. Moreover, there exists a constant $ C_{p,n}>0 $  such that  \begin{align}\label{lem_Est_I_}
			 \|\mathcal{I}^{-}_{\sigma} f\|_{L^p (\ol {N}A)}&  \leq C_{p,n} \left( \sup_{x\in G}\| \sigma(x,\cdot)\|_{\mathcal{MH}(S_p,2)} \right)  \|f\|_{L^p(\ol {N}A)}
\end{align}
  for all $f \in L^p(\ol {N}A)$.
 \end{lemma}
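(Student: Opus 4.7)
The plan is to apply the kernel estimates \eqref{est_of_K_glo} and \eqref{est_of_K_glo_p>2} from the previous lemma, reveal a convolution structure in the $\ol N \simeq \R$ direction, and then apply Young's inequality twice---first on $\ol N$, then on the $A$-direction against a suitable weight---to absorb the exponential volume growth of the Iwasawa measure. Set $\alpha = 2/p$ if $p \in [1,2)$ and $\alpha = 2/p'$ if $p \in (2,\infty)$, so that in either case one has
\[
|\mathcal{K}^{\text{glo}}(x, y^{-1}x)| \leq C_{p,n} \Bigl(\sup_{x\in G}\|\sigma(x,\cdot)\|_{\mathcal{MH}(S_p,2)}\Bigr)\, \tilde\Phi([y^{-1}x]^+),
\]
with $\tilde\Phi(w) := \eta(w)(1+w)^{-2} e^{-\alpha w}$.

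Next, I parametrize $\ol n, \ol m$ by $n, m \in \R$, write $y^{-1}x = \delta_{-s}(\ol m^{-1}\ol n) a_{t-s}$ on the support $\{s > t\}$ of $\chi^{-}(t-s)$, and compute the $\overline{A^+}$-component. Using $[g]^+ = [g^{-1}]^+$ (via the Weyl element in $K$), the dilation formula on parameters of $\ol N$, the identity $H(\ol v) = \tfrac{1}{2}\log(1+v^2)$ for $\ol v$ with parameter $v$, and Lemma \ref{lemma:expression_of_v-a _r}, one finds
\[
[y^{-1}x]^+ = (s-t) + \tfrac{1}{2}\log\!\bigl(1 + (n-m)^2 e^{4t}\bigr) + E,\qquad 0 \leq E \leq 2e^{-2(s-t)}.
\]
The essential feature is that the $\ol N$-dependence is through $(n-m)e^{2t}$ rather than $(n-m)e^{2s}$. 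After substituting $r := s-t > 0$ and dominating $\tilde\Phi$ by $(1+w)^{-2}e^{-\alpha w}$, the integration in $m$ becomes a convolution on $\ol N \simeq \R$:
\[
\int_\R |f(\ol m_m\, a_{t+r})|\, \tilde\Phi\!\bigl(r + \tfrac{1}{2}\log(1+(n-m)^2 e^{4t})\bigr)\, dm = \bigl(|f(\cdot, t+r)| * \psi_{t,r}\bigr)(n),
\]
with $\psi_{t,r}(v) := \tilde\Phi(r + \tfrac{1}{2}\log(1+v^2 e^{4t}))$. The change of variables $u = v e^{2t}$ gives $\|\psi_{t,r}\|_{L^1(\ol N)} = e^{-2t} g(r)$, where $g(r) := \int_\R \tilde\Phi\bigl(r + \tfrac{1}{2}\log(1+u^2)\bigr)\, du \leq C_\alpha (1+r)^{-2} e^{-\alpha r}$, valid since $\alpha > 1$ in all cases under consideration.

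Young's inequality on $\ol N$, combined with the Iwasawa factor $e^{2s} = e^{2(t+r)}$, then yields
\[
\|\mathcal{I}^{-}_\sigma f(\,\cdot\, a_t)\|_{L^p(\ol N)} \leq C \int_0^\infty e^{2r} g(r)\, \|f(\cdot, t+r)\|_{L^p(\ol N)}\, dr,
\]
the factors $e^{2t}$ and $e^{-2t}$ cancelling. Now set $G(t) := e^{2t/p}\|\mathcal{I}^{-}_\sigma f(\,\cdot\, a_t)\|_{L^p(\ol N)}$ and $H(t) := e^{2t/p}\|f(\cdot, t)\|_{L^p(\ol N)}$, so that the $L^p(\ol N A)$-norms identify with $\|G\|_{L^p(\R)}$ and $\|H\|_{L^p(\R)}$ respectively. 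The previous display becomes the one-sided convolution
\[
G(t) \leq C \int_0^\infty k(r)\, H(t+r)\, dr,\qquad k(r) := e^{2r/p'} g(r),
\]
and Young's inequality on $\R$ delivers the claim \eqref{lem_Est_I_} provided $\|k\|_{L^1(\R)} < \infty$. Since the integrand is $\lesssim (1+r)^{-2} e^{(2/p'-\alpha)r}$ with $2/p' - \alpha = 2 - 4/p < 0$ for $p \in [1,2)$ and $2/p' - \alpha = 0$ for $p \in (2,\infty)$, convergence holds in both regimes.

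The main technical obstacle is the distance computation above: the particular appearance of $(n-m)e^{2t}$ is what makes $\|\psi_{t,r}\|_{L^1(\ol N)}$ equal to $e^{-2t} g(r)$, exactly cancelling the $e^{2t}$ of the Iwasawa measure so that the two Young inequalities can be applied cleanly. Once this is in place, the final one-dimensional Young kernel is integrable precisely because the holomorphic extension of $\sigma$ to $S_p^\circ$ forces the exponential decay rate $\alpha > 1$ in the global kernel bound of the preceding lemma; the excluded point $p=2$ is exactly where $\alpha = 1$ and the planar integral defining $g(r)$ would diverge.
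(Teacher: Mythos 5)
Your proof is correct, and it takes a genuinely different route from the paper's. The paper argues by duality: it pairs $\mathcal{I}^{-}_{\sigma}f$ against a test function $\varphi$, applies H\"older's inequality in the $\ol m$-variable to produce $F(s)=\|f(\cdot\,a_s)\|_{L^p(\ol N)}$ and $\varPhi(s)=\|\varphi(\cdot\,a_s)\|_{L^{p'}(\ol N)}$, and then --- crucially --- uses the evenness of the Abel transform \eqref{Abel_tran} to convert $\int_{\ol N}\Phi([\ol n a_{-r}]^+)\,d\ol n$ into $e^{2r}\int_{\ol N}\Phi([\ol n a_{r}]^+)\,d\ol n$ (so that Ionescu's Lemma \ref{lemma:expression_of_v-a _r} applies directly), finishing with a second H\"older in $r$. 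You instead estimate $\mathcal{I}^{-}_{\sigma}f$ directly, computing the Cartan component $[y^{-1}x]^+$ explicitly in $\ol N A$-coordinates via the $[g]^+=[g^{-1}]^+$ symmetry; the resulting $\frac{1}{2}\log(1+(n-m)^2e^{4t})$ (with the exponent $4t$ rather than $4s$) is precisely what makes the $\ol N$-convolution kernel $\psi_{t,r}$ have $L^1$-norm $e^{-2t}g(r)$, cancelling the Iwasawa Jacobian. This replaces the Abel-transform identity of the paper by a hands-on algebraic computation, and replaces the duality/H\"older pairing by two successive applications of Young's inequality (first on $\ol N$, then on $A$). Both proofs ultimately hinge on the same two integrability facts --- $\int_\R(1+u^2)^{-\alpha/2}\,du<\infty$ and integrability of the one-dimensional kernel $k(r)$ --- governed by $\alpha>1$, i.e., the strict inclusion of $\R$ in the interior of $S_p$. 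Your direct argument is somewhat more self-contained (no dualization, no invocation of the Abel transform's parity), at the cost of a slightly longer group-theoretic computation; the paper's version is shorter once the Abel transform fact is granted, and transfers with fewer changes to the proof of the companion Lemma \ref{lem_est_of_I+}.
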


 \begin{proof}
     To get the desired result, it suffices to prove that for any smooth compactly supported functions $ f,\varphi: \ol {N} A\rightarrow \C$, one has
     \begin{align}\label{If,phi_est}
         \left| \left\langle\mathcal{I}^{-}_{\sigma} f, \varphi \right\rangle\right|	 \leq C_{p,n} \left( \sup_{x\in G}\| \sigma(x,\cdot)\|_{\mathcal{MH}(S_p,2)} \right)  \| f\|_{L^p(\ol N A )} \,  \| { \varphi} \|_{L^{p'}(\ol N A)}.
     \end{align}
We recall the expression of $\left\langle\mathcal{I}^{-}_{\sigma} f, h \right\rangle$ from \eqref{Ipm_sigma_prod}
     \begin{align*}
       \left \langle \mathcal{I}^{-}_{\sigma} f ,{ \varphi }\right\rangle   = \int_{\overline{N}} \int_{\mathbb{R}}   \int_{\mathbb{R}}  \int_{\overline{N}}f(\overline{m}a_s)  \ol {\varphi(\overline{n}a_t)} \mathcal{K}^{\text{glo}}(\overline{n}a_t,\delta_{-s} (\overline{m}^{-1}\overline{n}) a_{t-s}) \chi^{-}(t-s) e^{2(s+t)}\, ds\, d{\overline{m}}\, dt\,  d\ol{n},
\end{align*}
which after a change of variable $\ol {n}\mapsto \ol{m}^{-1} \ol{n}$ yields
\begin{align*}
       \left \langle \mathcal{I}^{-}_{\sigma} f ,{ \varphi }\right\rangle   = \int_{\overline{N}} \int_{\mathbb{R}} \int_{\mathbb{R}}   \int_{\overline{N}}  f(\overline{m}a_s) \ol {\varphi ( \ol{m}\, \overline{n}a_t)} \mathcal{K}^{\text{glo}}(\ol{m} \,\overline{n}a_t,\delta_{-s} (\overline{n}) a_{t-s}) \chi^{-}(t-s) e^{2(s+t)} d{\overline{n}}\, ds\, dt\,  d\ol{m},
\end{align*}
We recall that the map $\ol {n} \mapsto \delta_{-s}(\ol {n})$ is a dilation of $\ol{N}$. Hence by using \eqref{eqn:dialation_formula_for_N-}, we get
\begin{align*}
       \left \langle \mathcal{I}^{-}_{\sigma} f ,{ \varphi}\right\rangle   = \int_{\overline{N}} \int_{\mathbb{R}}  \int_{\overline{N}} \int_{\mathbb{R}}  f(\overline{m}a_s) \ol {\varphi ( \ol{m} \delta_{s}(\overline{n})a_t)}  \mathcal{K}^{\text{glo}}(\ol{m}\delta_{s} (\overline{n})a_t,\overline{n} a_{t-s}) \chi^{-}(t-s) e^{2t} d{\overline{n}}\, ds\, dt\,  d\ol{m}.
\end{align*}
Next, by using Fubini's theorem and the change of variable $ t \mapsto s-r $ (for fixed $s$), it follows that  
\begin{align}\label{I_exp_final_var}
           \left \langle \mathcal{I}^{-}_{\sigma} f ,{ \varphi}\right\rangle   = \int_{\overline{N}} \int_{\mathbb{R}}  \int_{\overline{N}} \int_{r=0}^{\infty}  f(\overline{m}a_s) \ol {\varphi ( \ol{m} \delta_{s}(\overline{n})a_{s-r})} \mathcal{K}^{\text{glo}}(\ol{m} \delta_{s} (\overline{n})a_{s-r},\overline{n} a_{-r})   e^{2(s-r)} dr\, d{\overline{n}}\, ds\,  d\ol{m}.
\end{align}
To make use of the estimates of $\mathcal{K}^{glo}$, we divide the proof into two parts. 
 First, we will prove the lemma for $1\leq p<2$. Taking modulus on both sides of the above expression and plugging the estimate \eqref{est_of_K_glo} of $\mathcal{K}^{glo}$ for $1\leq p<2$, gives us
 \begin{multline}\label{eqn:equi_expn_of_I_-_2}
      \left| \left \langle \mathcal{I}^{-}_{\sigma} f ,{ \varphi }\right\rangle \right|  \leq  C_{p,n} \left( \sup_{x\in G }\| \sigma(x,\cdot)\|_{\mathcal{MH}(S_p,2)} \right)   \,  \int_{r=0}^{\infty} \int_{\mathbb{R}} \int_{\overline{N}}  \int_{\overline{N}}  |f(\overline{m}a_s)|  | {\varphi ( \ol{m} \delta_{s}(\overline{n})a_{s-r})}|\\
      \cdot \eta (\overline{n} a_{-r}) \, \frac{e^{-\frac{2}{p}( \,[\overline{n} a_{-r}]^+)}}{ {(1+[\overline{n} a_{-r}]^+)}^{2}} e^{2(s-r)} d\ol{m}\, d{\overline{n}}\, ds \,dr.
 \end{multline}
 Now let us define,
	\begin{equation}\label{eqn:def_of_F_and_varPhi}
			\begin{aligned}
				F(s) =& \left[ \int_{\ol N} |f(\ol m a_s)|^p d\ol m \right]^{\frac{1}{p}},\\
				\varPhi(s)= & \left[ \int_{\ol N} |{ \varphi}(\ol m a_s)|^{p'} d\ol m \right]^{\frac{1}{p'}}.
			\end{aligned}
		\end{equation}
Applying H\"older's inequality in \eqref{eqn:equi_expn_of_I_-_2} and using \eqref{eqn:def_of_F_and_varPhi}, we get

 \begin{multline}\label{I-bef_Abel} 
      \left| \left \langle \mathcal{I}^{-}_{\sigma} f ,{ \varphi }\right\rangle \right|  \leq  C_{p,n} \left( \sup_{x\in G }\| \sigma(x,\cdot)\|_{\mathcal{MH}(S_p,2)} \right)   \,  \int_{r=0}^{\infty} \int_{\mathbb{R}}   \int_{\overline{N}}   F(s) \varPhi(s-r)\\
      \cdot \eta (\overline{n} a_{-r}) \, \frac{e^{-\frac{2}{p}( \,[\overline{n} a_{-r}]^+)}}{ {(1+[\overline{n} a_{-r}]^+)}^{2}} e^{2(s-r)}  d{\overline{n}}\, ds\, dr.
 \end{multline}
Since the Abel transform of a $K$-biinvariant function is even, from \eqref{Abel_tran} it follows that  
\begin{equation}\label{app_abel_I-}
\int\limits_{\ol{N}}\eta (\overline{n} a_{-r}) \, \frac{e^{-\frac{2}{p}( \,[\overline{n} a_{-r}]^+)}}{ {(1+[\overline{n} a_{-r}]^+)}^{2}}\,  d\ol{n} = e^{2r}\int\limits_{\ol{N}}\eta (\overline{n} a_{r}) \, \frac{e^{-\frac{2}{p}( \,[\overline{n} a_{r}]^+)}}{ {(1+[\overline{n} a_{r}]^+)}^{2}} \, \,  d\ol{n}, \quad \text{for } r\geq 0.
\end{equation}
Putting \eqref{app_abel_I-} the formula  above  in \eqref{I-bef_Abel}, we obtain
\begin{multline*} 
      \left| \left \langle \mathcal{I}^{-}_{\sigma} f ,{ \varphi }\right\rangle \right|  \leq  C_{p,n} \left( \sup_{x\in G }\| \sigma(x,\cdot)\|_{\mathcal{MH}(S_p,2)} \right)   \,  \int_{r=0}^{\infty} \int_{\mathbb{R}}   \int_{\overline{N}}   F(s) \varPhi(s-r)\\
      \cdot \eta (\overline{n} a_{r}) \, \frac{e^{-\frac{2}{p}( \,[\overline{n} a_{r}]^+)}}{ {(1+[\overline{n} a_{r}]^+)}^{2}} e^{2s}  d{\overline{n}}\, ds\, dr.
 \end{multline*}
 Substituting the explicit expression of $[\overline{n} a_{r}]^+$ (for $r\geq 0$) from Lemma \ref{lemma:expression_of_v-a _r} in the inequality above yields
 \begin{align*} 
      \left| \left \langle \mathcal{I}^{-}_{\sigma} f ,{ \varphi }\right\rangle \right|  & \leq  C_{p,n} \left( \sup_{x\in G }\| \sigma(x,\cdot)\|_{\mathcal{MH}(S_p,2)} \right)   \,  \int_{r=0}^{\infty} \int_{\mathbb{R}}    F(s) \varPhi(s-r)\\
     &  \hspace{5cm}\cdot \int_{\overline{N}}  \eta (\overline{n} a_{r}) \,{e^{-\frac{2}{p}r}}{} \frac{e^{-\frac{2}{p}H(\ol{n})}}{ {(1+H(\overline{n})+r )}^{2}} e^{2s}  d{\overline{n}}\, ds\, dr\\
 &\leq C_{p,n} \left( \sup_{x\in G }\| \sigma(x,\cdot)\|_{\mathcal{MH}(S_p,2)} \right)  \left(\int_{r=0}^{\infty} \left(\int_{\R}F(s) e^{\frac{2}{p}s} \varPhi(s-r)e^{\frac{2}{p'}(s-r)}  ds \right) \,   \right. \\ & \hspace{8cm} \cdot \left. \int_{\ol N}{ e^{-\frac{2}{p} H(\ol n) }}\, d\ol n      \frac{e^{(\frac{2}{p'}-\frac{2}{p})
						r}}{{(1+r)^{2}} } dr \right).
 \end{align*}
Here we are using this fact $H(\ol{n}) \geq 0$ for all $\ol{n}\geq 0$ (see \eqref{eqn:H(v)-positive}). Finally using \eqref{eqn:L1_norm_P(v-) (1+epsilon)_is_finite} with $1\leq p<2$, and applying H\"older's inequality, we conclude that
 \begin{align*}
     \left| \left \langle \mathcal{I}^{-}_{\sigma} f ,{ \varphi }\right\rangle \right|   \leq  C_{p,n} \left( \sup_{x\in G }\| \sigma(x,\cdot)\|_{\mathcal{MH}(S_p,2)} \right)  \| f\|_{L^p(\ol N A )} \,  \| { \varphi} \|_{L^{p'}(\ol N A)}.   \,  
 \end{align*}
 We next turn to prove the estimate \eqref{If,phi_est} for $p>2$, which will complete the proof of this lemma. After taking modulus on both sides of  \eqref{I_exp_final_var} and plugging the estimate, \eqref{est_of_K_glo_p>2} of $\mathcal{K}^{glo}$ for $2<p<\infty$, we can write   
\begin{multline*} 
      \left| \left \langle \mathcal{I}^{-}_{\sigma} f ,{ \varphi }\right\rangle \right|  \leq  C_{p,n} \left( \sup_{x\in G }\| \sigma(x,\cdot)\|_{\mathcal{MH}(S_p,2)} \right)   \,  \int_{r=0}^{\infty} \int_{\mathbb{R}} \int_{\overline{N}}  \int_{\overline{N}}  |f(\overline{m}a_s)|  | {\varphi ( \ol{m} \delta_{s}(\overline{n})a_{s-r})}|\\
      \cdot \eta (\overline{n} a_{-r}) \, \frac{e^{-\frac{2}{p'}( \,[\overline{n} a_{-r}]^+)}}{ {(1+[\overline{n} a_{-r}]^+)}^{2}} e^{2(s-r)} d\ol{m}\, d{\overline{n}}\, ds \,dr.
 \end{multline*}
 Then again, after an application of H\"older's inequality, we use \eqref{app_abel_I-} to get 
 \begin{multline*} 
      \left| \left \langle \mathcal{I}^{-}_{\sigma} f ,{ \varphi }\right\rangle \right|  \leq  C_{p,n} \left( \sup_{x\in G }\| \sigma(x,\cdot)\|_{\mathcal{MH}(S_p,2)} \right)   \,  \int_{r=0}^{\infty} \int_{\mathbb{R}}     F(s) \varPhi(s-r)\\
      \cdot  \int_{\overline{N}}  \eta (\overline{n} a_{r}) \,{e^{-\frac{2}{p'}r}}{} \frac{e^{-\frac{2}{p'}H(\ol{n})}}{ {(1+H(\overline{n})+r )}^{2}} e^{2s}  d{\overline{n}}\, ds\, dr
 \end{multline*}
 Since $2<p<\infty$, so we have $p'<2$. Thus we can use \eqref{eqn:L1_norm_P(v-) (1+epsilon)_is_finite} and write 
 
 \begin{align*} 
      \left| \left \langle \mathcal{I}^{-}_{\sigma} f ,{ \varphi }\right\rangle \right| 
 &\leq C_{p,n} \left( \sup_{x\in G }\| \sigma(x,\cdot)\|_{\mathcal{MH}(S_p,2)} \right)  \left(\int_{r=0}^{\infty} \left(\int_{\R}F(s) e^{\frac{2}{p}s} \varPhi(s-r)e^{\frac{2}{p'}(s-r)}  ds \right) \,    \right. \\ & \hspace{8cm} \cdot \left. \int_{\ol N}{ e^{-\frac{2}{p'} H(\ol n) }}\, d\ol n      \frac{dr}{{(1+r)^2} }  \right)\\
 &\leq C_{p,n} \left( \sup_{x\in G }\| \sigma(x,\cdot)\|_{\mathcal{MH}(S_p,2)} \right)  \| f\|_{L^p(\ol N A )} \,  \| { \varphi} \|_{L^{p'}(\ol N A)},   
 \end{align*}
 completing the proof. 
 \end{proof}
 \begin{lemma}\label{lem_est_of_I+}
		Let $p\in [1,2) \cup (2,\infty)$. Suppose that $ \sigma $ satisfies the hypothesis  of Theorem \ref{thm:Lp_boundedness_of_T_sigma_global} and $\mathcal{I}^{+}_{\sigma}$ be as in \eqref{Ipm_sigma}.  Then  $\mathcal{I}^{+}_{\sigma}$   is bounded from $L^{p}(\ol{N}A)$ to itself. Moreover, there exists a constant $ C_{p,n}>0 $  such that  \begin{align}\label{lem_Est_I+}
			 \|\mathcal{I}^{+}_{\sigma} f\|_{L^p (\ol {N}A)}&  \leq C_{p,n} \left( \sup_{x\in G}\| \sigma(x,\cdot)\|_{\mathcal{MH}(S_p,2)} \right)  \|f\|_{L^p(\ol {N}A)}
\end{align}
  for all $f \in L^p(\ol {N}A)$.
 \end{lemma}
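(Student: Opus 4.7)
The proof will follow the blueprint of Lemma \ref{lem_est_of_I-} with one structural simplification: the sign of the exponent coming from the Cartan projection will be immediately favorable, so we will not need the Abel-transform trick that was necessary in the $\mathcal{I}^{-}$ case.

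I would start by writing, for smooth compactly supported $f,\varphi$ on $\overline{N}A$, the expression
\begin{equation*}
\langle\mathcal{I}^{+}_{\sigma}f,\varphi\rangle=\int_{\overline{N}}\!\int_{\mathbb R}\!\int_{\overline{N}}\!\int_{\mathbb R} f(\overline{m}a_s)\overline{\varphi(\overline{n}a_t)}\mathcal{K}^{\text{glo}}(\overline{n}a_t,\delta_{-s}(\overline{m}^{-1}\overline{n})a_{t-s})\chi^{+}(t-s)e^{2(s+t)}\,ds\,d\overline{m}\,dt\,d\overline{n},
\end{equation*}
perform the change of variable $\overline{n}\mapsto\overline{m}^{-1}\overline{n}$, then apply the dilation identity \eqref{eqn:dialation_formula_for_N-} by substituting $\overline{n}\mapsto\delta_{s}(\overline{n})$ to clear the $\delta_{-s}$. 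Finally I would introduce $t=s+r$ with $r\ge 0$ (this is where the presence of $\chi^{+}(t-s)$ replaces the $r\ge 0$ range of the minus case, but with $a_{+r}$ in place of $a_{-r}$). The result is
\begin{equation*}
\langle\mathcal{I}^{+}_{\sigma}f,\varphi\rangle=\int_{\overline{N}}\!\int_{\mathbb R}\!\int_{\overline{N}}\!\int_{0}^{\infty}f(\overline{m}a_s)\overline{\varphi(\overline{m}\delta_{s}(\overline{n})a_{s+r})}\,\mathcal{K}^{\text{glo}}(\overline{m}\delta_{s}(\overline{n})a_{s+r},\overline{n}a_{r})e^{2(s+r)}\,dr\,d\overline{n}\,ds\,d\overline{m}.
\end{equation*}

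Next I would invoke the kernel estimates \eqref{est_of_K_glo} (for $1\le p<2$) and \eqref{est_of_K_glo_p>2} (for $p>2$), applied to the argument $\overline{n}a_{r}$ with $r\ge 0$. Crucially, Lemma \ref{lemma:expression_of_v-a _r} now applies \emph{directly} to $[\overline{n}a_{r}]^{+}=r+H(\overline{n})+E(\overline{n},r)$, so no Abel-transform detour is needed; this is the one place where the $\mathcal{I}^{+}$ analysis is technically simpler than $\mathcal{I}^{-}$. Defining $F$ and $\varPhi$ as in \eqref{eqn:def_of_F_and_varPhi} and applying H\"older's inequality in $\overline{m}$ (using translation invariance of $d\overline{m}$ on $\overline{N}$, which absorbs the shift by $\delta_{s}(\overline{n})$ inside $\varphi$), this collapses to
\begin{equation*}
|\langle\mathcal{I}^{+}_{\sigma}f,\varphi\rangle|\le C_{p,n}\Bigl(\sup_{x\in G}\|\sigma(x,\cdot)\|_{\mathcal{MH}(S_p,2)}\Bigr)\int_{0}^{\infty}\!\!\int_{\mathbb R}F(s)\varPhi(s+r)e^{2(s+r)}\,ds\cdot\mathcal{J}_{p}(r)\,dr,
\end{equation*}
where $\mathcal{J}_{p}(r)=\int_{\overline{N}}\eta(\overline{n}a_{r})e^{-\frac{2}{p}H(\overline{n})}(1+r+H(\overline{n}))^{-2}\,d\overline{n}$ in the case $1\le p<2$ (and $p$ replaced by $p'$ when $p>2$).

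Splitting $e^{2(s+r)}=e^{\frac{2}{p}s}\cdot e^{\frac{2}{p'}(s+r)}$ lets me regroup the inner integral as the pairing of $G_{1}(s):=F(s)e^{\frac{2}{p}s}\in L^{p}(\mathbb R)$ with the translate $G_{2}(s+r):=\varPhi(s+r)e^{\frac{2}{p'}(s+r)}\in L^{p'}(\mathbb R)$, and by H\"older in $s$ one obtains $\int_{\mathbb R}G_{1}(s)G_{2}(s+r)\,ds\le\|G_1\|_{p}\|G_2\|_{p'}=\|f\|_{L^{p}(\overline{N}A)}\|\varphi\|_{L^{p'}(\overline{N}A)}$ uniformly in $r$. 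It therefore remains to show $\int_{0}^{\infty}\mathcal{J}_{p}(r)\,dr<\infty$. For this I bound $(1+r+H(\overline{n}))^{-2}\le(1+r)^{-2}$ and factorize, reducing everything to the convergence of $\int_{0}^{\infty}(1+r)^{-2}dr$ together with $\int_{\overline{N}}e^{-\frac{2}{p}H(\overline{n})}\,d\overline{n}<\infty$ which holds via \eqref{eqn:L1_norm_P(v-) (1+epsilon)_is_finite} because $\frac{2}{p}>1$ when $1\le p<2$; symmetrically $\frac{2}{p'}>1$ handles the range $p>2$. Combining these gives \eqref{lem_Est_I+}. The main (very mild) obstacle is purely bookkeeping: matching the $s$-dependent exponential weights so that they pair correctly with the $L^{p}$-$L^{p'}$ norms of $f$ and $\varphi$ on $\overline{N}A$; everything else is a direct translation of the $\mathcal{I}^{-}$ argument.
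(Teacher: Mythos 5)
Your proposal follows the paper's argument for $\mathcal{I}^{+}_{\sigma}$ exactly: the same change of variables $\overline{n}\mapsto\overline{m}^{-1}\overline{n}$, the same dilation identity, the substitution $t=s+r$ with $r\ge 0$, the observation that Lemma~\ref{lemma:expression_of_v-a _r} now applies directly to $[\overline{n}a_r]^+$ (so the Abel-transform step from the $\mathcal{I}^{-}$ case is unnecessary), H\"older in $\overline{m}$ to produce $F$ and $\varPhi$, H\"older in $s$ uniformly in $r$, and finally the convergence of the $\overline{N}$-integral via \eqref{eqn:L1_norm_P(v-) (1+epsilon)_is_finite}. One small bookkeeping slip worth flagging: the displayed factorization $e^{2(s+r)}=e^{\frac{2}{p}s}\cdot e^{\frac{2}{p'}(s+r)}$ is algebraically false, but it compensates for the fact that the factor $e^{-\frac{2}{p}r}$ coming from $e^{-\frac{2}{p}[\overline{n}a_r]^+}$ was silently omitted from your display of $\mathcal{J}_p(r)$; the correct combined identity is $e^{2(s+r)}e^{-\frac{2}{p}r}=e^{\frac{2}{p}s}\,e^{\frac{2}{p'}(s+r)}$, after which everything goes through as in the paper (and as you yourself anticipated when you called the exponent-matching ``purely bookkeeping'').
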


 \begin{proof}
 Analysis similar to that in the proof of the previous lemma shows
that it is enough to prove $\left \langle \mathcal{I}^{+}_{\sigma} f ,{ \varphi}\right\rangle$  satisfies the estimate in \eqref{If,phi_est} for $1\leq p<2$,  where we recall the expression of  $\left \langle \mathcal{I}^{+}_{\sigma} f ,{ \varphi}\right\rangle   $ from \eqref{Ipm_sigma_prod}
\begin{align*}
       \left \langle \mathcal{I}^{+}_{\sigma} f ,{ \varphi}\right\rangle   = \int_{\overline{N}} \int_{\mathbb{R}}  \int_{\overline{N}} \int_{\mathbb{R}}  f(\overline{m}a_s) \ol {\varphi(\overline{n}a_t)}   \mathcal{K}^{\text{glo}}(\overline{n}a_t,\delta_{-s} (\overline{m}^{-1}\overline{n}) a_{t-s}) \chi^{+}(t-s) e^{2(s+t)}\, ds\, d{\overline{m}}\, dt\,  d\ol{n},
\end{align*}
for all smooth compactly supported functions  $f,\varphi$ on $\ol{N}A$. Applying the same change of variables as in the previous lemma, we obtain
\begin{align*}
       \left \langle \mathcal{I}^{+}_{\sigma} f ,{ \varphi}\right\rangle   = \int_{\overline{N}} \int_{\mathbb{R}}  \int_{\overline{N}} \int_{\mathbb{R}}  f(\overline{m}a_s) \ol {\varphi ( \ol{m} \delta_{s}(\overline{n})a_t)}  \mathcal{K}^{\text{glo}}(\ol{m}\delta_{s} (\overline{n})a_t,\overline{n} a_{t-s}) \chi^{-}(t-s) e^{2t} d{\overline{n}}\, ds\, dt\,  d\ol{m}.
\end{align*}
Next, by using Fubini's theorem and the change of variable $ t \mapsto r+s $ (for fixed $s$), it follows that  
\begin{align*}
           \left \langle \mathcal{I}^{+}_{\sigma} f ,{ \varphi}\right\rangle   =  \int_{\overline{N}} \int_{\mathbb{R}}  \int_{\overline{N}} \int_{r=0}^{\infty}  f(\overline{m}a_s) \ol {\varphi ( \ol{m} \delta_{s}(\overline{n})a_{r+s})} \mathcal{K}^{\text{glo}}(\ol{m} \delta_{s} (\overline{n})a_{r},\overline{n} a_{r})   e^{2(r+s)} dr\, d{\overline{n}}\, ds\,  d\ol{m}.
\end{align*}
Taking modulus on both sides of the above expression and plugging the estimate \eqref{est_of_K_glo} of $\mathcal{K}^{\text{glo}}$, we get
 \begin{multline}\label{eqn:equi_expn_of_I_+_2}
      \left| \left \langle \mathcal{I}^{+}_{\sigma} f ,{ \varphi }\right\rangle \right|  \leq  C_{p,n} \left( \sup_{x\in G }\| \sigma(x,\cdot)\|_{\mathcal{MH}(S_p,2)} \right)   \,  \int_{r=0}^{\infty} \int_{\mathbb{R}} \int_{\overline{N}}  \int_{\overline{N}}  |f(\overline{m}a_s)|  | {\varphi ( \ol{m} \delta_{s}(\overline{n})a_{r+s})}|\\
      \cdot \eta (\overline{n} a_{r}) \, \frac{e^{-\frac{2}{p}( \,[\overline{n} a_{r}]^+)}}{ {(1+[\overline{n} a_{r}]^+)}^{2}} e^{2(r+s)} d\ol{m}\, d{\overline{n}}\, ds \,dr.
 \end{multline}
 Plugging the explicit expression of $[\overline{n} a_{r}]^+$ from Lemma \ref{lemma:expression_of_v-a _r} in the inequality above gives us
 \begin{align*}
      \left| \left \langle \mathcal{I}^{+}_{\sigma} f ,{ \varphi }\right\rangle \right|  &\leq  C_{p,n} \left( \sup_{x\in G }\| \sigma(x,\cdot)\|_{\mathcal{MH}(S_p,2)} \right)   \,  \int_{r=0}^{\infty} \int_{\mathbb{R}} \int_{\overline{N}}  \int_{\overline{N}}  |f(\overline{m}a_s)|  | {\varphi ( \ol{m} \delta_{s}(\overline{n})a_{r+s})}|\\
      &\hspace{4.5cm}\cdot \eta (\overline{n} a_{r}) \, {e^{-\frac{2}{p}r}}{} \frac{e^{-\frac{2}{p}H(\ol{n})}}{ {(1+H(\overline{n})+r )}^{2}} e^{2(r+s)} d\ol{m}\, d{\overline{n}}\, ds \,dr\\
      & \leq C_{p,n} \left( \sup_{x\in G }\| \sigma(x,\cdot)\|_{\mathcal{MH}(S_p,2)} \right)   \,  \int_{r=0}^{\infty} \int_{\mathbb{R}}   F(s) e^{\frac{2}{p}s} \varPhi(s+r)e^{\frac{2}{p'}(s+r)} ds \\
      &\hspace{6cm} \cdot \int_{\overline{N}} \eta (\overline{n} a_{r}) \,  \frac{e^{-\frac{2}{p}H(\ol{n})}}{ {(1+r )}^{2}}  \, d{\overline{n}} \,dr,
 \end{align*}
 where in the last inequality we used H\"older's inequality and the fact $H(\ol n) \geq 0$. Finally, using \eqref{eqn:L1_norm_P(v-) (1+epsilon)_is_finite} and another application  of H\"older's inequality gives us
 \begin{align*}
      \left| \left \langle \mathcal{I}^{+}_{\sigma} f ,{ \varphi }\right\rangle \right|  &\leq  C_{p,n} \left( \sup_{x\in G }\| \sigma(x,\cdot)\|_{\mathcal{MH}(S_p,2)} \right)   \| f\|_{L^p(\ol N A )} \,  \| { \varphi} \|_{L^{p'}(\ol N A)}, 
 \end{align*}
 which concludes our lemma.
 \end{proof}
\section{Rough analysis of $\Psi_{\sigma}$}\label{sec_nonreg}
This section focuses on extending the boundedness result for pseudo-differential operators $\Psi_{\sigma}$ in Theorem \ref{thm:pdo_for_n,n_type} to accommodate rough symbols $\sigma$, which have no regularity condition in the space variable. Specifically, our goal in this section is to prove Theorem \ref{thm_nonreg}.

To achieve this, we first recall that the boundedness of $\Psi^{\text{glo}}_{\sigma}$ and $\Psi^{\text{dis}}_{\sigma}$ does not require any regularity condition on the space variable of the symbol $\sigma$, so their proofs follow a similar approach. In proving the boundedness of the local part $\Psi^{\text{loc}}_{\sigma}$, we crucially utilize the relation between it and Euclidean pseudo-differential operators, employing the result of Kenig and Staubach (Theorem \ref{thm_carlos}).

To outline the proof, we start by decomposing the pseudo-differential operator $\Psi_\sigma$ into discrete, local, and global parts, using the same argument as in Section \ref{decomposition of PSi}. The required boundedness of the operators $\Psi^{\text{dis}}_{\sigma}$ and $\Psi^{\text{glo}}_{\sigma}$ follows using similar methods as observed in Section \ref{sec_dis_part} and \ref{sec_glo_par}, respectively.

Next, we proceed to prove the corresponding $L^p(G)_{n,n}$ operator norm estimate for the local part $\Psi^{\text{loc}}_{\sigma}$. By utilizing the generalized transference principle of Coifmann-Weiss and employing the exact same analysis as in the proof of Theorem \ref{thm:Lp_boundedness_of_T_sigma_local}, we observe that it is sufficient to prove the hypothesis \eqref{eqn:Coifman-Weiss_condn_satisified_by_K} in Corollary \ref{cor_trans_cw}.

Let us recall from \eqref{eqn:separation_of_kernel} that we can write
 $$    \mathcal{K}^{\text{CW}}(\mathcal{R}_s x, \mathcal{R}_{-t} \mathcal{R}_s x , t) =    \mathcal{K}_0(xa_s,t) +  \mathcal{K}_{\text{err}}(xa_s,t), \quad x\in G, \, t\geq 0,$$
	such that
	\begin{enumerate}\item[(i)]There is some constant $ C>0 $ and $ \kappa \in L^1(\R) $, such that
		\begin{equation*} 
			|\mathcal{K}_{\text{err}}(x, t)| \leq C\, \kappa(t), \text{ for all   $ x \in G$} ,
		\end{equation*} \text{and}
		\item [(ii)] $ 
			\mathcal{K}_0(xa_s,t)  =  C \eta^\circ(a_t )\,  \Delta(t) \, \left( \frac{t}{\Delta(t)}\right)^{1/2} \int_{0}^\infty \Phi(\lambda)  \sigma(x a_s,\lambda ) \mathcal{J}_{0} (\lambda t) |c(\lambda)|^{-2} d\lambda.
		 $
	\end{enumerate} 
 Next for each $x\in G$, we define 
		\bes
		a_x(s,\xi):=   \int_{-\infty}^{\infty} \mathcal{K}_0(xa_s,t) e^{-2\pi i t \xi }  dt, \quad s, \xi\in \R.
		\ees
    By considering the hypothesis $\sigma (x,\lambda) \in \mathcal{S}^m_{1,\infty}(S_p)$, we can deduce from the calculations in the proof of \eqref{eqn:condn_on_K_0} that the family of symbols $\{a_x(s,\xi): x\in G\}$ satisfies the following estimate:
    \begin{equation}
    \| \partial^{\alpha}_{\xi}a_x(s,\xi)\|_{L^{\infty}_{s}} \leq C_\alpha (1+|\xi|)^{m- \alpha},
    \end{equation}
    where the constants $C_\alpha$ are independent of $x$. In other words, $a_x(s,\xi) \in {\mathcal{S}^{m}_
{1,\infty}}$ for all $x\in G$. Consequently, Theorem \ref{thm_carlos} implies that the hypothesis \eqref{eqn:Coifman-Weiss_condn_satisified_by_K} in Corollary \ref{cor_trans_cw} is satisfied. This in turn, establishes the $L^p(G)_{n,n}$-boundedness for $1\leq p<\infty$ of the operator $\Psi_\sigma^{\text{loc}}$. This completes the proof of Theorem \ref{thm_nonreg}.
  \qed

\section*{Final remarks}
In conclusion, we would like to offer some observations and draw attention to few open questions that, from our perspective, warrant further investigation.
\begin{enumerate} 
\item  By employing the Fourier inversion formula \cite[(10.4), Theorem 10.4]{Barker}, we can define multiplier operators, or more generally, pseudo-differential operators, for functions of $(m,n)$ type on the group $G$ and investigate their boundedness properties. While we have successfully established the boundedness of both the discrete and global components of the pseudo-differential operator using a similar analytical approach as in this article, addressing the local part poses a challenge.  By performing a calculation akin to that in Lemma \ref{thm:localexpansion-phi}, we can express $\phi_{\tau, \lambda}^{m,n}$   in terms of the Bessel function $\mathcal{J}_{\frac{m-n}{2}}$, unlike the $(n,n)$ type case where it was $\mathcal{J}_0$. Consequently, the argument presented in Section \ref{Analysis on the local part} may not be applicable unless $m=n$, as $\mathcal{J}_{\frac{m-n}{2}} (t)$ introduces a singularity near  $t=0$. Therefore, it is necessary to develop an alternative strategy to handle the local part of the pseudo-differential operator.
\item  Recently, Wrobel \cite{wrobel} presented a multiplier theorem for rank one symmetric spaces, which improves upon the results of both \cite{Stanton_and_Tomas} and \cite{Ionescu_2000}. It would be interesting to investigate whether a similar result can be obtained in our context.

\item   While our current approach has not yet demonstrated the $L^2$-boundedness of pseudo-differential operators, we view this as an opportunity to explore alternative strategies. In the near future, we plan to investigate the $L^2$-boundedness for pseudo-differential operators, both within our current framework and in rank one symmetric spaces of noncompact type. 
\end{enumerate}
\section*{Acknowledgments}
The authors thank Prof. Sanjoy Pusti for several useful discussions during the course of this work. TR and MR are supported by the FWO Odysseus 1 grant G.0H94.18N: Analysis and Partial Differential Equations, the Methusalem program of the Ghent University Special Research Fund (BOF), (TR Project title: BOFMET2021000601). MR is also supported by EPSRC grants EP/R003025/2 and EP/V005529/7.

\bibliographystyle{plain}
\bibliography{ReferencesIII.bib}

\begin{thebibliography}{10}

\bibitem{Anker}
J.-Ph. Anker.
\newblock {${\bf L}_p$} {F}ourier multipliers on {R}iemannian symmetric spaces
  of the noncompact type.
\newblock {\em Ann. of Math. (2)}, 132(3):597--628, 1990.

\bibitem{Barker}
W.~H. Barker.
\newblock {$L^p$} harmonic analysis on {${\rm SL}(2,{ \R})$}.
\newblock {\em Mem. Amer. Math. Soc.}, 76(393):iv+110, 1988.

\bibitem{MR3180890}
F.~Bernicot and D.~Frey.
\newblock Pseudodifferential operators associated with a semigroup of
  operators.
\newblock {\em J. Fourier Anal. Appl.}, 20(1):91--118, 2014.

\bibitem{Botchway_Ruzhansky_2020}
L.~N.~A. Botchway, P.~Ga\"{e}l~Kibiti, and M.~Ruzhansky.
\newblock Difference equations and pseudo-differential operators on {$\Bbb
  Z^n$}.
\newblock {\em J. Funct. Anal.}, 278(11):108473, 41, 2020.

\bibitem{MR3680539}
H.~Bustos and M.~M\u{a}ntoiu.
\newblock Twisted pseudo-differential operator on type {I} locally compact
  groups.
\newblock {\em Illinois J. Math.}, 60(2):365--390, 2016.

\bibitem{MR4322546}
D.~Cardona, J.~Delgado, and M.~Ruzhansky.
\newblock {$L^p$}-bounds for pseudo-differential operators on graded {L}ie
  groups.
\newblock {\em J. Geom. Anal.}, 31(12):11603--11647, 2021.

\bibitem{CR_Mem}
D.~Cardona and M.~Ruzhansky.
\newblock {\em Subelliptic pseudo-differential operators and Fourier integral
  operators on compact Lie groups}.
\newblock Mathematical Society of Japan, Tokyo.
\newblock (to appear).

\bibitem{Clerc_Stein}
J.~L. Clerc and E.~M. Stein.
\newblock {$L^{p}$}-multipliers for noncompact symmetric spaces.
\newblock {\em Proc. Nat. Acad. Sci. U.S.A.}, 71:3911--3912, 1974.

\bibitem{Coifman78}
R.~R. Coifman and Y.~Meyer.
\newblock {\em Au del\`{a} des op\'{e}rateurs pseudo-diff\'{e}rentiels.}
\newblock Soci\'{e}t\'{e} Math\'{e}matique de France, Paris, 1978.
\newblock With an English summary.

\bibitem{coifman_73}
R.~R. Coifman and G.~Weiss.
\newblock Operators associated with representations of amenable groups,
  singular integrals induced by ergodic flows, the rotation method and
  multipliers.
\newblock {\em Studia Math.}, 47:285--303, 1973.

\bibitem{coifman_77}
R.~R. Coifman and G.~Weiss.
\newblock {\em Transference methods in analysis}.
\newblock Conference Board of the Mathematical Sciences Regional Conference
  Series in Mathematics, No. 31. American Mathematical Society, Providence,
  R.I., 1976.

\bibitem{MR4343620}
A.~Dasgupta and S.~K. Nayak.
\newblock Pseudo-differential operators, {W}igner transform and {W}eyl
  transform on the {S}imilitude group, {$\Bbb{SIM}(2)$}.
\newblock {\em Bull. Sci. Math.}, 174:Paper No. 103087, 25, 2022.

\bibitem{MR3723800}
J.~Delgado and M.~Ruzhansky.
\newblock Schatten classes and traces on compact groups.
\newblock {\em Math. Res. Lett.}, 24(4):979--1003, 2017.

\bibitem{MR3936641}
J.~Delgado and M.~Ruzhansky.
\newblock {$L^p$}-bounds for pseudo-differential operators on compact {L}ie
  groups.
\newblock {\em J. Inst. Math. Jussieu}, 18(3):531--559, 2019.

\bibitem{MR4565437}
E.~Ewert.
\newblock Pseudodifferential operators on filtered manifolds as generalized
  fixed points.
\newblock {\em J. Noncommut. Geom.}, 17(1):333--383, 2023.

\bibitem{MR3362017}
V.~Fischer and M.~Ruzhansky.
\newblock A pseudo-differential calculus on graded nilpotent {L}ie groups.
\newblock In {\em Fourier analysis}, Trends Math., pages 107--132.
  Birkh\"{a}user/Springer, Cham, 2014.

\bibitem{MR3167566}
V.~Fischer and M.~Ruzhansky.
\newblock A pseudo-differential calculus on the {H}eisenberg group.
\newblock {\em C. R. Math. Acad. Sci. Paris}, 352(3):197--204, 2014.

\bibitem{FR}
V.~Fischer and M.~Ruzhansky.
\newblock {\em Quantization on nilpotent {L}ie groups}, volume 314 of {\em
  Progress in Mathematics}.
\newblock Birkh\"{a}user/Springer, [Cham], 2016.

\bibitem{MR3610249}
M.~B. Ghaemi and M.~J. Birgani.
\newblock {$L^p$}-boundedness, compactness of pseudo-differential operators on
  compact {L}ie groups.
\newblock {\em J. Pseudo-Differ. Oper. Appl.}, 8(1):1--11, 2017.

\bibitem{Grafakos}
L.~Grafakos.
\newblock {\em Classical {F}ourier analysis}, volume 249 of {\em Graduate Texts
  in Mathematics}.
\newblock Springer, New York, second edition, 2008.

\bibitem{Helgason_GGA}
S.~Helgason.
\newblock {\em Groups and geometric analysis}, volume 113 of {\em Pure and
  Applied Mathematics}.
\newblock Academic Press, Inc., Orlando, FL, 1984.
\newblock Integral geometry, invariant differential operators, and spherical
  functions.

\bibitem{Helgason_GA}
S.~Helgason.
\newblock {\em Geometric analysis on symmetric spaces}, volume~39 of {\em
  Mathematical Surveys and Monographs}.
\newblock American Mathematical Society, Providence, RI, second edition, 2008.

\bibitem{Hormander_1965}
L.~H\"{o}rmander.
\newblock Pseudo-differential operators.
\newblock {\em Comm. Pure Appl. Math.}, 18:501--517, 1965.

\bibitem{Hormander_Ann_1966}
L.~H\"{o}rmander.
\newblock Pseudo-differential operators and non-elliptic boundary problems.
\newblock {\em Ann. of Math. (2)}, 83:129--209, 1966.

\bibitem{Hormander_class}
L.~H\"{o}rmander.
\newblock Pseudo-differential operators and hypoelliptic equations.
\newblock In {\em Singular {I}ntegrals ({P}roc. {S}ympos. {P}ure {M}ath.,
  {C}hicago, {I}ll., 1966)}, pages 138--183. 1967.

\bibitem{HormanderL271}
L.~H\"{o}rmander.
\newblock On the {$L\sp{2}$} continuity of pseudo-differential operators.
\newblock {\em Comm. Pure Appl. Math.}, 24:529--535, 1971.

\bibitem{Ionescu_2000}
A.~D. Ionescu.
\newblock Fourier integral operators on noncompact symmetric spaces of real
  rank one.
\newblock {\em J. Funct. Anal.}, 174(2):274--300, 2000.

\bibitem{Ionescu_2002}
A.~D. Ionescu.
\newblock Singular integrals on symmetric spaces of real rank one.
\newblock {\em Duke Math. J.}, 114(1):101--122, 2002.

\bibitem{Ionescu_2003}
Alexandru~D. Ionescu.
\newblock Singular integrals on symmetric spaces. {II}.
\newblock {\em Trans. Amer. Math. Soc.}, 355(8):3359--3378, 2003.

\bibitem{carlos_07}
C.~E. Kenig and W.~Saubach.
\newblock {$\Psi$}-pseudodifferential operators and estimates for maximal
  oscillatory integrals.
\newblock {\em Studia Math.}, 183(3):249--258, 2007.

\bibitem{Kohn_Nirenberg_1965}
J.~J. Kohn and L.~Nirenberg.
\newblock An algebra of pseudo-differential operators.
\newblock {\em Comm. Pure Appl. Math.}, 18:269--305, 1965.

\bibitem{MR0374832}
T.~H. Koornwinder.
\newblock A new proof of a {P}aley-{W}iener type theorem for the {J}acobi
  transform.
\newblock {\em Ark. Mat.}, 13:145--159, 1975.

\bibitem{MR0774055}
T.~H. Koornwinder.
\newblock Jacobi functions and analysis on noncompact semisimple {L}ie groups.
\newblock In {\em Special functions: group theoretical aspects and
  applications}, Math. Appl., pages 1--85. Reidel, Dordrecht, 1984.

\bibitem{Kumango70}
H.~Kumano-go.
\newblock A problem of {N}irenberg on pseudo-differential operators.
\newblock {\em Comm. Pure Appl. Math.}, 23:115--121, 1970.

\bibitem{Masson_14}
E.~Le~Masson.
\newblock Pseudo-differential calculus on homogeneous trees.
\newblock {\em Ann. Henri Poincar\'{e}}, 15(9):1697--1732, 2014.

\bibitem{mililic_1977}
D.~Mili\v{c}i\'{c}.
\newblock Asymptotic behaviour of matrix coefficients of the discrete series.
\newblock {\em Duke Math. J.}, 44(1):59--88, 1977.

\bibitem{MR3953844}
M.~M\u{a}ntoiu.
\newblock A positive quantization on type {I} locally compact groups.
\newblock {\em Math. Nachr.}, 292(5):1043--1055, 2019.

\bibitem{MR3741847}
M.~M\u{a}ntoiu and M.~Ruzhansky.
\newblock Pseudo-differential operators, {W}igner transform and {W}eyl systems
  on type {I} locally compact groups.
\newblock {\em Doc. Math.}, 22:1539--1592, 2017.

\bibitem{MR3969446}
M.~M\u{a}ntoiu and M.~Ruzhansky.
\newblock Quantizations on nilpotent {L}ie groups and algebras having flat
  coadjoint orbits.
\newblock {\em J. Geom. Anal.}, 29(3):2823--2861, 2019.

\bibitem{MR3923338}
M.~M\u{a}ntoiu and M.~Sandoval.
\newblock Pseudo-differential operators associated to general type {I} locally
  compact groups.
\newblock In {\em Analysis and partial differential equations: perspectives
  from developing countries}, volume 275 of {\em Springer Proc. Math. Stat.},
  pages 172--190. Springer, Cham, 2019.

\bibitem{Nagase77}
M.~Nagase.
\newblock The {$L\sp{p}$}-boundedness of pseudo-differential operators with
  non-regular symbols.
\newblock {\em Comm. Partial Differential Equations}, 2(10):1045--1061, 1977.

\bibitem{NIST_2010}
F.~W.~J. Olver, D.~W. Lozier, R.~F. Boisvert, and C.~W. Clark, editors.
\newblock {\em N{IST} handbook of mathematical functions}.
\newblock U.S. Department of Commerce, National Institute of Standards and
  Technology, Washington, DC; Cambridge University Press, Cambridge, 2010.
\newblock With 1 CD-ROM (Windows, Macintosh and UNIX).

\bibitem{PR_PDO_22}
S.~Pusti and T.~Rana.
\newblock {$L^p$}-boundedness of pseudo-differential operators on rank one
  {R}iemannian symmetric spaces of noncompact type.
\newblock {\em Math. Z.}, 305(1):Paper No. 2, 34, 2023.

\bibitem{Rana}
T.~Rana.
\newblock A genuine analogue of the {W}iener {T}auberian theorem for some
  {L}orentz spaces on {${\rm SL}(2,\Bbb R)$}.
\newblock {\em Forum Math.}, 33(1):213--243, 2021.

\bibitem{Rana_Rano}
T.~Rana and S.~K. Rano.
\newblock {$L^p$}-boundedness of pseudo-differential operators on homogeneous
  trees.
\newblock {\em Studia Math.}, 2023.
\newblock https://doi.org/10.4064/sm220816-27-3.

\bibitem{Ricci}
F.~Ricci and B.~Wr\'{o}bel.
\newblock Spectral multipliers for functions of fixed {$K$}-type on
  {$L^p(SL(2,\Bbb R))$}.
\newblock {\em Math. Nachr.}, 293(3):554--584, 2020.

\bibitem{Ruzhansky_PDO_SYM}
M.~Ruzhansky and V.~Turunen.
\newblock {\em Pseudo-differential operators and symmetries}, volume~2 of {\em
  Pseudo-Differential Operators. Theory and Applications}.
\newblock Birkh\"{a}user Verlag, Basel, 2010.
\newblock Background analysis and advanced topics.

\bibitem{Ruzhansky_torus_10}
M.~Ruzhansky and V.~Turunen.
\newblock Quantization of pseudo-differential operators on the torus.
\newblock {\em J. Fourier Anal. Appl.}, 16(6):943--982, 2010.

\bibitem{Ruzhansky_comp_13}
M.~Ruzhansky and V.~Turunen.
\newblock Global quantization of pseudo-differential operators on compact {L}ie
  groups, {$\rm SU(2)$}, 3-sphere, and homogeneous spaces.
\newblock {\em Int. Math. Res. Not. IMRN}, (11):2439--2496, 2013.

\bibitem{MR3217484}
M.~Ruzhansky, V.~Turunen, and J.~Wirth.
\newblock H\"{o}rmander class of pseudo-differential operators on compact {L}ie
  groups and global hypoellipticity.
\newblock {\em J. Fourier Anal. Appl.}, 20(3):476--499, 2014.

\bibitem{Ruzhansky_glo_14}
M.~Ruzhansky and J.~Wirth.
\newblock Global functional calculus for operators on compact {L}ie groups.
\newblock {\em J. Funct. Anal.}, 267(1):144--172, 2014.

\bibitem{MR3369343}
M.~Ruzhansky and J.~Wirth.
\newblock {$L^p$} {F}ourier multipliers on compact {L}ie groups.
\newblock {\em Math. Z.}, 280(3-4):621--642, 2015.

\bibitem{Stanton_and_Tomas}
R.~J. Stanton and P.~A. Tomas.
\newblock Expansions for spherical functions on noncompact symmetric spaces.
\newblock {\em Acta Math.}, 140(3-4):251--276, 1978.

\bibitem{Stein_93}
E.~M. Stein.
\newblock {\em Harmonic analysis: real-variable methods, orthogonality, and
  oscillatory integrals}, volume~43 of {\em Princeton Mathematical Series}.
\newblock Princeton University Press, Princeton, NJ, 1993.
\newblock With the assistance of Timothy S. Murphy, Monographs in Harmonic
  Analysis, III.

\bibitem{Stromberg}
J.-O. Str\"{o}mberg.
\newblock Weak type {$L^{1}$} estimates for maximal functions on noncompact
  symmetric spaces.
\newblock {\em Ann. of Math. (2)}, 114(1):115--126, 1981.

\bibitem{Titchmarsh}
E.~C. Titchmarsh.
\newblock {\em The theory of functions}.
\newblock Oxford University Press, Oxford, second edition, 1939.

\bibitem{trombi_1972}
P.~C. Trombi and V.~S. Varadarajan.
\newblock Asymptotic behaviour of eigen functions on a semisimple {L}ie group:
  the discrete spectrum.
\newblock {\em Acta Math.}, 129(3-4):237--280, 1972.

\bibitem{MR4078700}
J.~van Neerven and P.~Portal.
\newblock The {W}eyl calculus for group generators satisfying the canonical
  commutation relations.
\newblock {\em J. Operator Theory}, 83(2):253--298, 2020.

\bibitem{wrobel}
B.~Wr\'{o}bel.
\newblock Imrpoved multiplier theorem on rank one noncompact symmetric spaces.
\newblock {\em arXiv:2305.06039}, 2023.
\newblock https://doi.org/10.48550/arXiv.2305.06039.

\end{thebibliography}

\end{document}